\definecolor{darkblue}{rgb}{0,0,0.4}
\newtheorem{theorem}{Theorem}
\newtheorem{lemma}[theorem]{Lemma}
\newtheorem{proposition}[theorem]{Proposition}
\newtheorem{corollary}[theorem]{Corollary}
\newtheorem{fact}[theorem]{Fact}
\newtheorem{nlemma}{Lemma}[section]
\newtheorem{ntheorem}[nlemma]{Theorem}
\newtheorem{nproposition}[nlemma]{Proposition}
\newtheorem{nfact}[nlemma]{Fact}
\theoremstyle{remark}%
\newtheorem{remark}[theorem]{Remark}%
\newtheorem{nremark}[nlemma]{Remark}
\theoremstyle{definition}%
\newcommand{\Z}{\mathbb Z}
\newcommand{\N}{\mathbb N}
\newcommand{\F}{\mathbb F}
\newcommand{\Fq}{\F_q}
\newcommand{\Fqto}[1]{\F_{q^{#1}}}
\newcommand{\Fqk}{\F_{q^k}}
\newcommand{\FqX}{\Fq[X]}
\newcommand{\FqXrational}{\Fq(X)}
\newcommand{\cyclgen}[1]{\zeta_{#1}}
\newcommand{\of}[1]{\left(#1\right)}
\newcommand{\frobenius}[2]{{#1}^{(#2)}}
\DeclareMathOperator{\ord}{ord}
\DeclareMathOperator{\rad}{rad}
\DeclareMathOperator{\lcm}{lcm}
\newcommand{\coeffdeg}[2]{\mathrm{coeffdeg}_{#1}\left(#2\right)}
\renewcommand{\char}{\text{char}}
\newcommand{\cyclocoset}[3]{\mathrm C_{#1,#2}(#3)}
\newcommand{\cyclorepsystem}[2]{\mathrm {CR}_{#1}(#2)}
\newcommand{\spin}[2]{\mathrm{spin}_{#1}\left[#2\right]}
\newcommand{\blue}[1]{{\color{darkblue}#1}}
\newcommand{\changed}[1]{\blue{#1}}
\providecommand{\keywords}[1]{\small \noindent \textbf{Keywords:} #1}
\begin{document}
	
	\title{Closed formulas for the factorization of $X^n-1$, \\
		the $n$-th cyclotomic polynomial, $X^n-a$ and $f(X^n)$\\
		 over  a finite field 
		 for arbitrary positive integers $n$}
	
	\author{Anna-Maurin Graner\\
		{University of Rostock}, Germany\\
		\tt {amg.research@posteo.com}}

	\maketitle
	
	\begin{abstract}
		The factorizations of the polynomial \(X^n-1\) and the cyclotomic polynomial \(\Phi_n\) over a finite field \(\Fq\) have been studied for a very long time. Explicit factorizations have been given for the case that \(\rad(n)\mid q^w-1\) where \(w=1\), \(w\) is prime or \(w\) is the product of two primes. For arbitrary \(a\in \Fq^\ast\) the factorization of the  polynomial \(X^n-a\)  is needed for the construction of constacyclic codes. Its factorization has been determined for the case \(\rad(n)\mid q-1\) and for the case that there exist at most three distinct prime factors of \(n\) and \(\rad(n)\mid q^w-1\) for a  prime \(w\). Both polynomials \(X^n-1\) and \(X^n-a\) are compositions of the form \(f(X^n)\) for a monic irreducible polynomial \(f\in \FqX\). The factorization of the composition \(f(X^n)\) is known for the case \(\gcd(n, \mathrm{ord}(f)\cdot \deg(f))=1\) and \(\rad(n)\mid q^w-1\) for \(w=1\) or \(w\) prime.   
		
		However, there does not exist a closed formula for the explicit factorization of either  \(X^n-1\), the cyclotomic polynomial \(\Phi_n\), the binomial \(X^n-a\) or the composition \(f(X^n)\). Without loss of generality we can assume that \(\gcd(n,q)=1\). Our main theorem, \Cref{theorem: factorization X^n-a for gcd(n_q)=1}, is a closed formula for the factorization of \(X^n-a\) over \(\Fq\) for any \(a\in \Fq^\ast\) and any positive integer \(n\) such that \(\gcd(n,q)=1\). From our main theorem we derive one closed formula each for the factorization of \(X^n-1\)  and of the \(n\)-th cyclotomic polynomial \(\Phi_n\)  for any positive integer \(n\) such that \(\gcd(n,q)=1\) (\Cref{theorem: factorization of X^n-1 for gcd(n q)=1} and \Cref{theorem: factorization cyclotomic polynomial}).  Furthermore,  our main theorem yields a closed formula for the factorization of the composition \(f(X^n)\) for any irreducible polynomial \(f\in \FqX\), \(f\neq X\), and any positive integer \(n\) such that \(\gcd(n,q)=1\) (\Cref{theorem: factorization of f(X^n) for gcd(n q)=1}).
	\end{abstract}

	\keywords{Factorization, irreducible polynomials, composition, constacyclic codes, cyclic codes, cyclotomic polynomials, $X^n-1$.}

\section{Introduction}
\label{section: Intro}

\changed{This preprint has been updated with significant changes. To make these visible, we present them in blue color. Most of them can be found in Sections \ref{subsection: factorization X^n-a gcd(n q)=1}, \ref{subsection: factorization X^n-1 and cyclotomic polynomial} and \ref{section: factorization of f(X^n)}. New results are numbered as \(S.N\), where \(S\) is the section number and \(N\in \N\), so that the old theorem numbering stays intact. Theorems 19, 24, and 25 have been removed, because  no value is added by including complicated factorizations for special cases when a general formula is given.}

Let \(\Fq\) be a finite field of \(q\) elements and of characteristic \(\char(\Fq)\). The algebraic closure of \(\Fq\) is \(\overline{\F}_q\). For every \(a\in \Fq^\ast\) we denote by \(\ord(a)\) the order of \(a\) in the multiplicative group \(\Fq^\ast\). Furthermore, for an irreducible polynomial \(f\in \FqX\)  the smallest positive integer \(e\) such that \(f \mid X^e-1\)  or, equivalently, the multiplicative order of all of its roots, is called the \textit{order} of \(f\) and is denoted by \(e = \ord(f)\). Let \(n\) be a positive integer  and \(n=p_1^{i_1}\cdots p_m^{i_m}\) its decomposition into powers of distinct primes \(p_1, \ldots, p_m\). Then \(\nu_{p_j}(n) = i_j\) denotes  the \(p_j\)-adic value of \(n\) and we call \(\rad(n):= p_1 \ldots p_m\) the \textit{radical of \(n\)}. If \(\gcd(n,q)=1\), then there exists a primitive \(n\)-th root of unity in the splitting field \(\overline \F_q\), which we denote by \(\cyclgen{n}\). For a positive integer \(m\) such that \(\gcd(n,m)=1\) the multiplicative order of \(m\) in \(\Z/n\Z\) is \(\ord_n(m)\). Thus, \(\cyclgen{n}\) is an element of the extension field \(\Fqto{\ord_n(q)}\). Let \(g,h\in \FqX\) be polynomials over \(\Fq\) such that \(h\neq 0\), then \(Q = \frac gh\) is called a \textit{rational function} and the set of all rational functions over \(\Fq\) is denoted by \(\FqXrational\). For a polynomial \(f\in \FqX\) and \(Q\in \FqXrational\) we define the \textit{\(Q\)-transform} or \textit{rational transformation of \(f\) with \(Q\)} as \(f^Q:= h^n \cdot f\of{\frac gh}\in \FqX\).

The factorization of polynomials over a finite field \(\Fq\) is needed for many applications in cryptography and coding theory. In particular, the factorization   of the polynomial \(X^n-a\) for \(a\in \F_q^\ast\) is used for the construction of \(a\)-constacyclic codes of length \(n\). An \textit{\(a\)-constacyclic code of length \(n\)} is an ideal in \(\FqX/\langle X^n-a\rangle\), which is a principal ideal generated by a factor of \(X^n-a\). For many special cases the factorization of \(X^n-a\) has been determined (see for example \cite{Dinh2012, BakshiRaka2012, ChenFanLinLiu2012, LiuLiWang2017, LiYue2018, WuYue2018constacyclic, ShiFu2020, RakphonChongchitmatePhuto2022}). In particular, \cite{WuYue2018constacyclic} gave the factorization of \(X^n-a\) for the case \(\rad(n)\mid q-1\) and \cite{RakphonChongchitmatePhuto2022} for the case that \(n\) has up to three distinct prime factors and \(\rad(n)\mid q^w-1\) for a prime \(w\).  If \(a =1\), then the  ideals in \(\FqX/\langle X^n-1\rangle\) are the \textit{cyclic codes of length \(n\)}. Every cyclic code is generated by a factor of the polynomial \(X^n-1\). Many results on the factorization of \(X^n-1\) exist (see \cite{ BlakeGaoMullin1993,LN1994, Meyn1996, Brochero-MartinezGiraldo-VergaradeOliveira2015, WuYueFan2018, OliveiraReis2021, SinghDeepak2023}).  In particular, \cite{Brochero-MartinezGiraldo-VergaradeOliveira2015} determined the factorization of \(X^n-1\) for the case \(\rad(n)\mid q-1\) and  \cite{WuYueFan2018} and \cite{WuYue2021} gave the factorization of \(X^n-1\) for the case that \(\rad(n)\) divides \(q^w-1\) or \(q^{vw}-1\), where \(v\) and \(w\) are prime. 

The factors of \(X^n-1\) called the cyclotomic polynomials have received a lot of attention. For every positive integer \(n\) such that \(\gcd(n,q)=1\) the \textit{\(n\)-th cyclotomic polynomial \(\Phi_n\)} is defined as \(\Phi_n = \prod_{\substack {0 \leq j \leq n-1\\ \gcd(j,n)=1}} (X-\cyclgen n^j).\) It is the product of all \(\varphi(n)\) primitive \(n\)-th roots of unity. From the fact that  \(X^n-1\) decomposes as \(\prod_{j=0}^{n-1} (X-\cyclgen n^j)\) over \(\overline{\F}_q\) follows directly  that \(X^n-1 = \prod_{d\mid n} \Phi_d\). Thus, if the factorization of \(\Phi_d\) is known for every divisor \(d\) of \(n\), then these factorizations yield the factorization of \(X^n-1\). It is well known that  \(\Phi_n\)  factors into \(\frac {\varphi(n)} {\ord_n(q)}\) distinct monic irreducible polynomials of degree \(\ord_n(q)\) over \(\Fq\). More explicit factorizations of \(\Phi_n\) have been given in \cite{Stein2001,FY2007,WangWang2012, TW2013,WZFY2017, Alshareef2018}.

 The second polynomial which we study in this paper is the composition \(f(X^n)\) for an irreducible polynomial \(f\in \FqX\) and a positive integer \(n\). This composition is in fact a rational transformation  because \(f(X^n)=f^Q\) for \(Q=\frac {X^n}1\in \FqXrational\). Rational transformations are widely used for the construction of irreducible polynomials \cite{Wiedemann1988, Meyn1990, McNay1995, Kyuregyan2002, PanarioReisWang2020}. The classical composition method searches for a rational function \(Q=\frac gh\in \FqXrational\) such that the rational transformation \(f^Q\) is irreducible. Variations of this method search for irreducible factors of the composition \(f^Q\) (see \cite{Ugolini2013,Ugolini2016}).

Let  \(f\in \FqX\) be an irreducible polynomial of degree \(k\) and let  \(\alpha\in \Fqk\) be a root of \(f\).   Then the explicit factorization of \(f(X^n)\) is known for the case \(\gcd(n, \ord(\alpha)\cdot k)=1\) and \(\rad(n)\mid q^w-1\) for \(w=1\) or \(w\) prime (see \cite{Brochero-MartinezReisSilva-Jesus2019}). The authors of \cite{Brochero-MartinezReisSilva-Jesus2019} determined the factorization of \(f(X^n)\) directly. This is not necessary, because the  factorization of \(f(X^n)\) can be obtained from the factorization of the polynomial \(X^n-\alpha\) over \(\Fqto{k}\), as the following three theorems show.  For every rational function \(Q=\frac gh \in \FqXrational\) the \(Q\)-transform \(f^Q\)  is directly connected to the polynomial \(g-\alpha h\) over \(\Fqto{k}\), as we can see from the following well-known theorem by Cohen.

\begin{theorem}[{\cite[Lemma 1]{Cohen1969}}]
	\label{Cohen1969: Lemma 1}
	Let  \(f\in \FqX\) be an irreducible polynomial of degree \(k\) and \(\alpha\in \Fqto k\) be a root of \(f\). Further, let \(Q = \frac gh \in \FqXrational\) be a rational function over \(\Fq\). Then the polynomial  \(f^Q\) is irreducible over \(\Fq\) if and only if  the polynomial \(g-\alpha h\) is irreducible over \(\Fqto{k}\).
\end{theorem}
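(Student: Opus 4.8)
The plan is to reduce the question to a comparison of the degrees of two minimal polynomials of a single well‑chosen element. One may assume $\gcd(g,h)=1$ (absorbing a common factor of $g$ and $h$ into a scalar changes neither the irreducibility of $f^Q$ nor that of $g-\alpha h$), and the case $k=1$ is trivial since then $f^Q=g-\alpha h$ literally, so assume $k\geq 2$. Writing $f=a_k\prod_{j=0}^{k-1}\bigl(X-\alpha^{q^j}\bigr)$ over $\Fqto{k}$, where $a_k$ is the leading coefficient of $f$ and the $\alpha^{q^j}$ are the (distinct) conjugates of $\alpha$, and substituting into the definition $f^Q=h^{k}f\of{\frac{g}{h}}$ yields the identity
\[
f^Q \;=\; a_k\prod_{j=0}^{k-1}\bigl(g-\alpha^{q^j}h\bigr)
\]
over $\Fqto{k}$; thus $f^Q$ is, up to the scalar $a_k$, the product of the $\mathrm{Gal}(\Fqto{k}/\F_q)$‑conjugates of the polynomial $P:=g-\alpha h\in\Fqto{k}[X]$. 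Since $k\geq 2$, each factor $g-\alpha^{q^j}h$ has degree $D:=\max(\deg g,\deg h)$ — the leading terms cannot cancel, as that would force $\alpha^{q^j}\in\F_q$ — so $\deg f^Q=kD$.

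Next I would fix a root $\beta\in\overline{\F}_q$ of $P$. Because $\gcd(g,h)=1$ we have $h(\beta)\neq 0$, hence $\alpha=g(\beta)/h(\beta)\in\F_q(\beta)$, so $\Fqto{k}=\F_q(\alpha)\subseteq\F_q(\beta)$ and therefore $\F_q(\beta)=\Fqto{k}(\beta)$. Multiplicativity of degrees along the tower $\F_q\subseteq\Fqto{k}\subseteq\F_q(\beta)$ gives $[\F_q(\beta):\F_q]=k\cdot[\Fqto{k}(\beta):\Fqto{k}]$. Moreover the $j=0$ factor in the product above vanishes at $\beta$, so $\beta$ is a root of $f^Q$, and hence the minimal polynomial of $\beta$ over $\F_q$ divides $f^Q$.

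Finally I would chain equivalences for this fixed $\beta$: the polynomial $P$ is irreducible over $\Fqto{k}$ $\iff$ $[\Fqto{k}(\beta):\Fqto{k}]=\deg P=D$ (a nonconstant polynomial is irreducible precisely when it is, up to a scalar, the minimal polynomial of one of its roots) $\iff$ $[\F_q(\beta):\F_q]=kD=\deg f^Q$ (by the tower identity) $\iff$ $f^Q$ is irreducible over $\F_q$ (the minimal polynomial of $\beta$ over $\F_q$ already divides $f^Q$, so it has degree $\deg f^Q$ exactly when $f^Q$ is an associate of it). I expect the only real friction to be bookkeeping: making the reduction to $\gcd(g,h)=1$ precise, ruling out leading‑term cancellation in the factors $g-\alpha^{q^j}h$, and disposing of the degenerate cases $k=1$ and constant $Q$. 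The conceptual core — the product‑of‑conjugates identity for $f^Q$ together with the tower $\F_q\subseteq\Fqto{k}=\F_q(\alpha)\subseteq\F_q(\beta)$ — is short.
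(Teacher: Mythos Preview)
Your proof is correct. The paper does not include its own proof of this statement (it is cited from Cohen), but its proof of the stronger \Cref{Mullin2010: Lemma 13} rests on the same two ingredients you use: the product-of-conjugates identity $f^Q=a_k\prod_{j=0}^{k-1}(g-\alpha^{q^j}h)$, and the observation that any root $\beta$ of $g-\alpha h$ satisfies $\alpha=g(\beta)/h(\beta)\in\F_q(\beta)$, hence $\Fqto{k}\subseteq\F_q(\beta)$. The packaging differs: where you work with a single root $\beta$ and the degree tower $\F_q\subset\Fqto{k}\subset\F_q(\beta)$ to compare minimal polynomials directly, the paper expresses the containment as ``$\coeffdeg{q}{R}=k$ for every irreducible factor $R$ of $g-\alpha h$'' and then appeals to the $q$-spin construction via \Cref{KK11: Lemma 1}. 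Your route is self-contained and slightly more elementary for the bare irreducibility equivalence; the paper's spin framework is set up because it is what is actually needed for the full factorization statement in \Cref{Mullin2010: Lemma 13}.
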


Kyuregyan and Kyureghyan gave a proof   for \Cref{Cohen1969: Lemma 1} in \cite{Kyuregyan2011}  which can be tweaked  to show that the factorization of \(g-\alpha h\) yields the factorization of \(f^Q\). This fact is stated in \Cref{Mullin2010: Lemma 13}, which was presented in \cite{Mullin2010}. Since the paper is not publicly available, we present the proof of \Cref{Mullin2010: Lemma 13} here.   For its formulation and its proof we need the following definitions and \Cref{KK11: Lemma 1}. 

For positive integers \(j\) and \(d\), and a polynomial \(h (X)=\sum_{i=0}^m a_i X^i\) over \(\Fqto d\), we define the polynomial 
\[\frobenius h j (X) := \sum_{i=0}^m a_i^{q^j} X^i.\]
Note that \(h^{(j)}=\frobenius h 0 = h\) if \(j\) is a multiple of \([\Fq(a_0,\ldots, a_m):\Fq]\), the extension degree of \(\Fq(a_0, \ldots, a_m)\) over \(\Fq\). The extension degree \([\Fq(a_0,\ldots, a_m):\Fq]\) plays a key role in the arguments to come. {We denote it by \(\coeffdeg qh\) and call it the \textit{degree of the coefficients of \(h\) over \(\Fq\)}. Note that in other publications the polynomial \(\frobenius hj \) is also denoted by \(\sigma_q^j(h)\), where \(\sigma_q\) is the Frobenius automorphism (as a ring autormorphism on \(\FqX\)).

\begin{theorem}[{\cite[Lemma 1]{Kyuregyan2011}}]
	\label{KK11: Lemma 1}
	Let \(g\in \FqX\) be a monic polynomial of degree \(dm \), where \(d,m\in \N\).  Then \(g\) is irreducible over \(\Fq\) if and only if there exists a monic irreducible polynomial \(h\) of degree \(m\) over \( \Fqto d\)  such that \(\coeffdeg qh=d\) and 	\[g = \prod_{j=0}^{d-1} \frobenius h j.\]
\end{theorem}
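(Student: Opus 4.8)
The statement to prove is a characterization: a monic polynomial $g \in \FqX$ of degree $dm$ is irreducible over $\Fq$ if and only if there is a monic irreducible $h$ of degree $m$ over $\Fqto d$ with $\coeffdeg qh = d$ and $g = \prod_{j=0}^{d-1} \frobenius h j$. I will prove the two directions separately, and the natural engine for both is the Frobenius automorphism $\sigma_q$ acting on $\Fqto d[X]$ by $h \mapsto \frobenius h 1$, whose orbit on any $h$ with $\coeffdeg qh = d$ is exactly $\{\frobenius h 0, \dots, \frobenius h {d-1}\}$.

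First, the ``if'' direction. Suppose $h$ is monic irreducible of degree $m$ over $\Fqto d$ with $\coeffdeg qh = d$, and set $g = \prod_{j=0}^{d-1} \frobenius h j$. Applying $\sigma_q$ cyclically permutes the factors $\frobenius h j$, so $\frobenius g 1 = g$; since $\coeffdeg q{\frobenius h j} = d$ for each $j$ (they all generate the same field $\Fqto d$), no proper sub-product is $\sigma_q$-invariant, hence $g$ lies in $\FqX$ but has no nontrivial factorization into $\sigma_q$-stable pieces. Concretely: let $\alpha \in \overline{\F}_q$ be a root of $h$. Then $\alpha$ is a root of $g$, and because $h$ is irreducible of degree $m$ over $\Fqto d$ we get $[\Fqto d(\alpha):\Fqto d] = m$, so $[\Fq(\alpha):\Fq]$ divides $dm$; conversely $\Fqto d(\alpha) = \Fq(\alpha, \text{coeffs of }h)$ and $\coeffdeg qh = d$ forces $d \mid [\Fq(\alpha):\Fq]$, while $m = [\Fqto d(\alpha):\Fqto d] \leq [\Fq(\alpha):\Fqto d \cap \Fq(\alpha)] \cdot(\dots)$ — more cleanly, $[\Fq(\alpha):\Fq] \geq [\Fqto d(\alpha):\Fqto d] = m$ only after adjusting by $d$; the right bookkeeping gives $[\Fq(\alpha):\Fq] = dm$. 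Thus the minimal polynomial of $\alpha$ over $\Fq$ has degree $dm = \deg g$ and divides $g$, so $g$ is that minimal polynomial and is irreducible.

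Second, the ``only if'' direction. Suppose $g \in \FqX$ is monic irreducible of degree $dm$. Let $\alpha$ be a root of $g$, so $\Fq(\alpha) = \Fqto{dm}$, which contains $\Fqto d$. Let $h$ be the minimal polynomial of $\alpha$ over $\Fqto d$; then $h$ is monic irreducible over $\Fqto d$ of degree $[\Fqto{dm}:\Fqto d] = m$. The coefficients of $h$ lie in $\Fqto{dm}$, and I must show $\coeffdeg qh = d$, i.e. $\Fq(\text{coeffs of }h) = \Fqto d$: it is contained in $\Fqto d$ since $h \in \Fqto d[X]$, and it cannot be smaller, because if the coefficients lay in a proper subfield $\Fqto{d'}$ with $d' \mid d$, $d' < d$, then $h$ would already be defined over $\Fqto{d'}$ and $\alpha$ would have degree $\leq m$ over $\Fqto{d'}$, forcing $[\Fqto{dm}:\Fqto{d'}] \leq m$, i.e. $dm \leq d'm$, a contradiction. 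Finally, $g$ and $\prod_{j=0}^{d-1}\frobenius h j$ are both monic of degree $dm$, and the latter divides $g$ over $\Fq$: indeed $h \mid g$ over $\Fqto d$, and applying each power of $\sigma_q$ (which fixes $g \in \FqX$) gives $\frobenius h j \mid g$; the factors $\frobenius h j$ are pairwise coprime since they are the distinct Galois conjugates of the irreducible $h$ over $\Fqto d$ (distinctness again uses $\coeffdeg qh = d$). Hence their product divides $g$, and by degree count equals $g$.

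The main obstacle is the careful degree/subfield bookkeeping in the ``if'' direction — showing that the hypothesis $\coeffdeg qh = d$ is exactly what forces $[\Fq(\alpha):\Fq] = dm$ rather than a proper divisor, and dually that distinctness of the $d$ conjugates $\frobenius h j$ follows from the same hypothesis. Once the lemma ``$\coeffdeg qh = d \iff$ the orbit of $h$ under $\sigma_q$ has full length $d$ with pairwise distinct (and, being irreducible, pairwise coprime) members'' is isolated and proved, both directions of the theorem fall out by matching an irreducible polynomial with the Frobenius orbit of the minimal polynomial of one of its roots over $\Fqto d$.
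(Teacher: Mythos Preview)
The paper does not prove this theorem at all; it is merely quoted from \cite[Lemma 1]{Kyuregyan2011} and used as a black box, so there is no in-paper proof to compare against. Your argument is essentially the standard one and is correct in outline, though the ``if'' direction is written somewhat loosely. The key step you gesture at---that $\coeffdeg qh=d$ forces $d\mid[\Fq(\alpha):\Fq]$---deserves one clean sentence: the roots of $h$ are $\alpha,\alpha^{q^d},\dots,\alpha^{q^{(m-1)d}}$, all of which lie in $\Fq(\alpha)$ (a finite field is closed under Frobenius), so the coefficients of $h$, being symmetric functions of these roots, lie in $\Fq(\alpha)$; hence $\Fqto d=\Fq(\text{coeffs of }h)\subseteq\Fq(\alpha)$ and therefore $\Fq(\alpha)=\Fqto d(\alpha)=\Fqto{dm}$. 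With that in place both directions are complete. The ``only if'' direction and the orbit-length lemma (that $\coeffdeg qh=d$ is equivalent to the $\frobenius hj$, $0\le j\le d-1$, being pairwise distinct) are argued correctly.
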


From \Cref{KK11: Lemma 1} follows directly, that for a monic irreducible polynomial \(h\) of degree \(m\) over \(\Fqto{d}\) such that \(\coeffdeg qh=d\) the polynomial \(\prod_{j=0}^{d-1} h^{(j)}\) is a monic irreducible polynomial of degree \(dm\) over \(\Fq\). We call this polynomial the \textit{\(q\)-spin of \(h\)} and denote it by \(\spin  q h\). Note that if \(\beta \in \Fqto{dm}\) is a root of \(h\), then \(h\) is the minimal polynomial of \(\beta\) over \(\Fqto d\) and \(g\) is the minimal polynomial of \(\beta\) over \(\Fq\). With the definition of the \(q\)-spin we can state \Cref{Mullin2010: Lemma 13}.

\begin{theorem}[{\cite[Lemma 13]{Mullin2010}}]
	\label{Mullin2010: Lemma 13}
	Let  \(f\in \FqX\) be an irreducible polynomial of degree \(k\) and  \(\alpha\in \Fqto k\) be a root of \(f\). Further, let \(Q= \frac g h\in \FqXrational\) and \(\prod_R R\) be the factorization of \(g-\alpha h\) into irreducible factors over \(\Fqto k\). Then the factorization of \(f^Q\) into irreducible factors  over \(\Fq\) is given by 
	\begin{equation*}
		\prod_{R} \spin  q R,
	\end{equation*}
	where \(\coeffdeg q R=k\) for all irreducible factors \(R\) of \(g-\alpha h\) over \(\Fqto k\).
\end{theorem}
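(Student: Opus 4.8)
The plan is to reduce the statement to \Cref{KK11: Lemma 1} and \Cref{Cohen1969: Lemma 1} by tracking roots. First I would fix a root \(\alpha\in\Fqto k\) of \(f\) and let \(R\) be an irreducible factor of \(g-\alpha h\) over \(\Fqto k\), of degree \(m\) say. Since \(f\) is the minimal polynomial of \(\alpha\) over \(\Fq\), the splitting field of \(f\) is \(\Fqto k\), and a root \(\beta\) of \(R\) lives in some extension \(\Fqto{km}\) of \(\Fqto k\); the key claim will be that \(\beta\) is actually a root of \(f^Q\) and that its minimal polynomial over \(\Fq\) is \(\spin q R\). For the first part, evaluating \(f^Q=h^{\deg f}\, f(g/h)\) at \(\beta\): since \(R(\beta)=0\) we have \(g(\beta)=\alpha h(\beta)\), and one must check \(h(\beta)\neq 0\) (if \(h(\beta)=0\) then also \(g(\beta)=0\), contradicting \(\gcd(g,h)=1\) after clearing the rational function to lowest terms — I would note that we may assume \(\gcd(g,h)=1\), which is the standing convention for rational transformations). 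Then \(f(g(\beta)/h(\beta))=f(\alpha)=0\), so \(f^Q(\beta)=0\).

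Next I would compute \(\coeffdeg q R\). The coefficients of \(R\) generate a subfield of \(\Fqto k\) containing... — here is the crux: I want \(\coeffdeg q R = k\) exactly, not a proper divisor. This should follow because \(\Fq(\beta)\) contains \(g(\beta)/h(\beta)=\alpha\), hence contains \(\Fqto k=\Fq(\alpha)\); so \([\Fq(\beta):\Fq]\) is a multiple of \(k\), say \(k\ell\). On the other hand \(R\) is the minimal polynomial of \(\beta\) over \(\Fqto k\) (it is irreducible over \(\Fqto k\) and annihilates \(\beta\)), so \([\Fqto k(\beta):\Fqto k]=\deg R=m\), giving \([\Fq(\beta):\Fq]=km\). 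Combined with the tower \(\Fq\subseteq \Fq(\alpha)=\Fqto k\subseteq \Fq(\beta)\), we get \(\ell=m\), and more importantly the coefficients of \(R\) lie in \(\Fqto k\) but their field of definition cannot be a proper subfield \(\Fqto{d}\) with \(d\mid k\), \(d<k\): if it were, then \(R\) would already be irreducible over \(\Fqto d\) of degree \(m\), so \([\Fqto d(\beta):\Fqto d]=m\), forcing \([\Fq(\beta):\Fq]=dm<km\), a contradiction. Hence \(\coeffdeg q R=k\), which is also the assertion made in the statement, and \(\spin q R=\prod_{j=0}^{k-1}R^{(j)}\) is by \Cref{KK11: Lemma 1} a monic irreducible polynomial of degree \(km\) over \(\Fq\) — precisely the minimal polynomial of \(\beta\) over \(\Fq\) (since that minimal polynomial has degree \(km\), is monic, divides \(\spin q R\) because \(\beta\) is a common root, and \(\spin q R\) is irreducible).

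It then remains to see that \(\prod_R \spin q R\) is the \emph{entire} factorization of \(f^Q\), with no repetitions and correct total degree. For the degree: \(\deg f^Q = k\cdot\deg Q\) where \(\deg Q=\max(\deg g,\deg h)\) (the standard degree convention for \(Q\)-transforms under \(\gcd(g,h)=1\)), while \(\deg(g-\alpha h)=\deg Q\) as a polynomial over \(\Fqto k\) — here I would handle the mild bookkeeping when \(\deg g=\deg h\) and leading coefficients interact, as in the Kyuregyan–Kyureghyan treatment — so \(\sum_R\deg R=\deg Q\) and \(\sum_R\deg\spin q R = k\sum_R \deg R = k\deg Q=\deg f^Q\). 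For distinctness and exhaustiveness I would argue that two distinct irreducible factors \(R_1,R_2\) of \(g-\alpha h\) yield coprime spins: a common irreducible factor over \(\Fq\) would share a root \(\gamma\in\overline\F_q\), and then \(\gamma\) is a root of some \(R_1^{(i)}\) and some \(R_2^{(j)}\); applying \(\sigma_q^{-i}\) shows \(\gamma^{q^{-i}}\) is a root of both \(R_1\) and \(\sigma_q^{j-i}(R_2)=R_2^{(j-i)}\), forcing \(g-\alpha h\) and \(g-\alpha^{q^{j-i}}h\) to share a root, hence (comparing) \(\alpha^{q^{j-i}}=\alpha\), so \(k\mid j-i\), so \(R_2^{(j-i)}=R_2\) and \(R_1=R_2\) by irreducibility — contradiction. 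Equivalently, one can simply count: the \(\spin q R\) are monic irreducible factors of \(f^Q\) whose degrees already sum to \(\deg f^Q\), so they must be exactly the distinct irreducible factors, each with multiplicity one (\(f^Q\) is squarefree when \(f\) is separable and \(Q\) is in lowest terms, which holds here since \(f\) is irreducible over a finite field).

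The main obstacle I anticipate is the degree/leading-coefficient bookkeeping for \(g-\alpha h\) when \(\deg g=\deg h\) (ensuring \(g-\alpha h\) does not drop degree and that the factorization statement stays clean), together with making the "field of definition of \(R\) is exactly \(\Fqto k\)" step fully rigorous; everything else is a direct consequence of \Cref{KK11: Lemma 1} and the correspondence in \Cref{Cohen1969: Lemma 1}.
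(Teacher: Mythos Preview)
Your route is genuinely different from the paper's. The paper does not track roots or count degrees at all: it simply uses \(f(X)=\prod_{j=0}^{k-1}(X-\alpha^{q^j})\) to write
\[
  f^Q \;=\; \prod_{j=0}^{k-1}(g-\alpha^{q^j}h)
        \;=\; \prod_{j=0}^{k-1}(g-\alpha h)^{(j)}
        \;=\; \prod_{j=0}^{k-1}\prod_R R^{(j)}
        \;=\; \prod_R \spin q R,
\]
an identity of polynomials that carries multiplicities automatically and never needs \(\deg(g-\alpha h)\), coprimality of spins, or squarefreeness. All of the bookkeeping you flag as obstacles simply disappears in this computation. Conversely, your field-degree argument for \(\coeffdeg q R=k\) (via \(\alpha=g(\beta)/h(\beta)\in\Fq(\beta)\), hence \(k\mid[\Fq(\beta):\Fq]=km\), and ruling out a smaller field of definition) is cleaner than what the paper sketches there.

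There is one actual slip in your outline: the claim that \(f^Q\) is squarefree whenever \(f\) is separable and \(Q\) is in lowest terms is false. Take \(f=X-1\) and \(Q=X^{p}\) in characteristic \(p\): then \(f^Q=X^{p}-1=(X-1)^{p}\). So your ``simply count degrees'' closing step does not finish the argument in general; you would still need to match multiplicities, and the clean way to do that is precisely the polynomial identity above. Your root-tracking and distinctness-of-spins arguments are fine in the squarefree case, but to cover the general statement you should either restrict to \(\gcd(\deg Q,q)=1\) (not assumed here) or replace the counting step by the paper's two-line identity.
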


\begin{proof}[Proof of \Cref{Mullin2010: Lemma 13}]
	Since \(f\) is an irreducible polynomial of degree \(k\) and \(\alpha\) is a root of \(f\), the decomposition of \(f\) over \(\Fqto k\) is \(f(X) = \prod_{j=0}^{k-1} (X-\alpha^{q^j})\). Thus, the \(Q\)-transform of \(f\) considered over \(\Fqto{k}\) satisfies:
	\begin{equation*}
		\label{eq: f^Q and g-alpha h}
		f^Q = h^k \cdot f\of{\frac gh} = \prod_{j=0}^{k-1} (g-\alpha^{q^j}\cdot h) = \prod_{j=0}^{k-1} \frobenius{(g-\alpha h)} j = \prod_{j=0}^{k-1} \frobenius {\left(\prod_R R\right) } j = \prod_{j=0}^{k-1} \prod_R \frobenius R j,
	\end{equation*}
	because \(g, h\in \FqX\) and therefore \(\frobenius gj =g\) and \(\frobenius h j=h\) for all \(0 \leq j \leq {k-1}\). 
	Furthermore, if \(\prod_R R\) is the factorization of \(g-\alpha h\) over \(\Fqto{k}\), then for every irreducible factor \(R\) the degree of its coefficients over \(\Fq\) must be equal to \(k\). Indeed, if \(\coeffdeg{q}{R}=m<k\) and \(\gamma \in \Fqto{m}\) were a root of \(R\), then \(\gamma\) would be a root of \(g-\alpha h\) satisfying \(g(\gamma)-\alpha \cdot h(\gamma)=0\).  However \(g(\gamma)\) and \(h(\gamma)\) are elements of \(\Fqto m\), which implies that \(\alpha \cdot h(\gamma)\) cannot be equal to \(g(\gamma)\), a contradiction. Consequently, the \(q\)-spin of \(R\) is equal to \(\spin q R = \prod_{j=0}^{k-1} \frobenius R j \) and 
	\[f^Q = \prod_R \left(\prod_{j=0}^{k-1} \frobenius R j\right) = \prod_R \spin q R.\]
\end{proof}
}

From \Cref{Mullin2010: Lemma 13} follows directly that it suffices to study the factorization of  \(g-\alpha h\) over \(\Fqto{k}\) to obtain the factorization of \(f^Q\). For the rational function \(Q=\frac {X^n} 1\), this is the polynomial \(X^n-\alpha\). The case  \(\alpha=1\) only applies if \(f=X-1\). However, if there exists an element \(\beta\) in \(\Fqto{k}\) with \(\beta ^n = \alpha\), then the factorization of \(X^n-\alpha\) can easily be obtained from the factorization of \(X^n-1\). In terms of constacyclic codes, the \(\alpha\)-constacyclic code is \(n\)-equivalent to a cyclic code (see \cite[Lemma 3.1]{Hughes2000}, \cite{ChenDinhLiu2014}). We reformulate this well-known connection using the concepts introduced above:

\begin{lemma}
	\label{lemma: X^n-alpha from X^n-1 for beta^n = alpha}
	Let  \(n\in \N\) and \(f\in \FqX\) be an irreducible polynomial of degree \(k\). Further, let \(\alpha\in \Fqto k\) be a root of \(f\) such that there exists \(\beta\in \Fqto{k}\) with \(\beta^n = \alpha\). If  \(\prod_{R} R\) is the factorization of \(X^n-1\) into monic irreducible factors over \(\Fqto k\), then the factorization of \(X^n-\alpha\) into monic  irreducible factors over \(\Fqto{k}\) is \(\prod_R R^Q\), where \(Q = \frac X\beta \in \Fqto k (X)\). 
\end{lemma}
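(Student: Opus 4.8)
The plan is to reduce the whole statement to a single polynomial identity obtained by the linear substitution \(X \mapsto X/\beta\). First I would unwind the definition of the \(Q\)-transform for \(Q = \frac{X}{\beta}\): if \(R\) is a polynomial of degree \(d\) over \(\Fqto{k}\), then \(R^Q = \beta^{d}\cdot R(X/\beta)\). Since \(\beta \neq 0\) (because \(\beta^{n} = \alpha \in \Fqto{k}^{\ast}\)), the assignment \(X \mapsto X/\beta\) extends to an \(\Fqto{k}\)-algebra automorphism of \(\Fqto{k}[X]\), and multiplying by the unit \(\beta^{d}\) affects neither irreducibility nor degree. Hence \(R^{Q}\) is irreducible over \(\Fqto{k}\) exactly when \(R\) is, and \(\deg R^{Q} = \deg R\). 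A one-line computation of leading coefficients shows that \(R^{Q}\) is monic whenever \(R\) is.

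Next I would produce the factorization itself. Starting from the given identity \(X^{n}-1 = \prod_{R} R\) over \(\Fqto{k}\), I substitute \(X \mapsto X/\beta\) and multiply both sides by \(\beta^{n}\). On the left this gives \(\beta^{n}\bigl((X/\beta)^{n}-1\bigr) = X^{n}-\beta^{n} = X^{n}-\alpha\). On the right, since the degrees of the monic irreducible factors of \(X^{n}-1\) sum to \(n\), one has \(\beta^{n} = \prod_{R}\beta^{\deg R}\); distributing this across the product yields \(\prod_{R}\beta^{\deg R}R(X/\beta) = \prod_{R} R^{Q}\). Therefore \(X^{n}-\alpha = \prod_{R} R^{Q}\) as a polynomial identity over \(\Fqto{k}\).

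Finally I would assemble the conclusion: by the first paragraph each \(R^{Q}\) is monic and irreducible over \(\Fqto{k}\), and since \(X \mapsto X/\beta\) is a bijection, distinct factors \(R\) give distinct factors \(R^{Q}\), with multiplicities preserved (in particular the factorization stays squarefree under the standing assumption \(\gcd(n,q)=1\)). Hence \(\prod_{R} R^{Q}\) is precisely the factorization of \(X^{n}-\alpha\) into monic irreducible factors over \(\Fqto{k}\).

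I do not expect a genuine obstacle here; the only points needing care are the bookkeeping that turns the single scalar \(\beta^{n}\) into \(\prod_{R}\beta^{\deg R}\), and the observation that \(X \mapsto X/\beta\) is an honest ring automorphism (so that it \emph{preserves} irreducibility rather than merely reflecting it). As an alternative one could invoke \Cref{Cohen1969: Lemma 1} applied to the degree-one polynomial \(X-\alpha\) over \(\Fqto{k}\) together with \Cref{Mullin2010: Lemma 13}, but the direct substitution argument is shorter and makes the correspondence \(R \leftrightarrow R^{Q}\) completely transparent.
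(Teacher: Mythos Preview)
Your proof is correct and the polynomial-identity computation is line-for-line the same as the paper's. The one genuine difference is how irreducibility of each \(R^{Q}\) is justified: the paper picks a root \(\gamma_R\) of \(R\), notes that \(X-\gamma_R\beta\) is irreducible over \(\Fqto{k\cdot\deg(R)}\), and then invokes Cohen's criterion (\Cref{Cohen1969: Lemma 1}) to conclude that \(R^{Q}\) is irreducible over \(\Fqto{k}\); you instead observe that \(X\mapsto X/\beta\) is an \(\Fqto{k}\)-algebra automorphism of \(\Fqto{k}[X]\) and that multiplication by the unit \(\beta^{\deg R}\) preserves irreducibility. Your route is more elementary and self-contained, whereas the paper's route keeps the argument in the framework of \(Q\)-transforms and Cohen's lemma that is used throughout the rest of the paper. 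You even anticipate this alternative in your final paragraph.
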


\begin{proof}
	Since \(\beta^n = \alpha\), we can write 
	\begin{equation*} 
		X^n-\alpha = X^n - \beta^n = \beta^n \left(\left(\frac X \beta \right)^n - 1\right) = \beta ^n \prod_{R} R\of {\frac X \beta} = \prod_{R} \beta^{\deg(R)} R\of{\frac X \beta } = \prod_{R} R^Q.
	\end{equation*}
	For every monic irreducible factor \(R\) let \(\gamma_R\) be a root of \(R\). Then the polynomial \(X-\gamma_R \cdot \beta\) is irreducible over \(\Fqto{k\cdot \deg(R)}\). Thus, with   \Cref{Cohen1969: Lemma 1} the polynomial \(R^Q\) is irreducible over \(\Fqto k\), which concludes our proof.
\end{proof}

We combine \Cref{lemma: X^n-alpha from X^n-1 for beta^n = alpha} with \Cref{Mullin2010: Lemma 13} and show that if there exists an element \(\beta\in \Fqto k\) such that \(\beta^n = \alpha\), then the factorization of \(f(X^n)\) is given by the factorization of \(X^n-1\):

\begin{proposition}
	\label{theorem: factorization of f(X^n) if beta^n=alpha}
	Let  \(n\in \N\) and \(f\in \FqX\) be an irreducible polynomial of degree \(k\). Further, let \(\alpha\in \Fqto k\) be a root of \(f\) such that there exists \(\beta\in \Fqto{k}\) with \(\beta^n = \alpha\). If \(\prod_{R} R\) is  the factorization of \(X^n-1\) into monic irreducible factors over \(\Fqto k\). Then the factorization of \(f(X^n)\) into monic  irreducible factors over \(\Fq\) is
	\[\prod_{R} \spin q {R^Q} ,\]
	where \(Q=\frac X \beta\in \Fqto k (X)\).
\end{proposition}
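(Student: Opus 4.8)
The plan is simply to chain \Cref{Mullin2010: Lemma 13} and \Cref{lemma: X^n-alpha from X^n-1 for beta^n = alpha} together; no new ideas are required, since $f(X^n)$ is a rational transformation of $f$ and $X^n-\alpha$ is the associated polynomial over $\Fqto k$.

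First I would write $f(X^n) = f^{Q'}$ where $Q' = \frac{X^n}{1}\in\FqXrational$, i.e.\ $g = X^n$ and $h = 1$, so that $g - \alpha h = X^n-\alpha$. Applying \Cref{Mullin2010: Lemma 13} with this $Q'$: if $\prod_S S$ denotes the factorization of $X^n-\alpha$ into monic irreducible factors over $\Fqto k$, then the factorization of $f(X^n)$ into monic irreducible factors over $\Fq$ is $\prod_S \spin q S$, and the lemma additionally guarantees $\coeffdeg q S = k$ for every such $S$, so each $q$-spin $\spin q S$ is well-defined (and is monic irreducible of degree $k\deg(S)$).

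Next I would invoke \Cref{lemma: X^n-alpha from X^n-1 for beta^n = alpha} with $Q = \frac X\beta$: since $\beta^n = \alpha$, the factorization of $X^n-\alpha$ into monic irreducible factors over $\Fqto k$ is exactly $\prod_R R^Q$. Here $R^Q = \beta^{\deg(R)} R\of{\frac X\beta}$ has leading coefficient $\beta^{\deg(R)}\cdot\beta^{-\deg(R)} = 1$, hence is monic, and it is irreducible over $\Fqto k$ by \Cref{Cohen1969: Lemma 1}. Thus the (multi)set $\{S\}$ equals $\{R^Q\}$, and substituting into $\prod_S \spin q S$ yields $\prod_R \spin q {R^Q}$, which is the asserted factorization.

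Since this is a composition of two already-established results, there is no real obstacle; the only point worth double-checking is that $\spin q {R^Q}$ is well-defined, i.e.\ $\coeffdeg q {R^Q} = k$, which is precisely part of the conclusion of \Cref{Mullin2010: Lemma 13} applied to $X^n-\alpha$ (alternatively: a root of $R^Q$ lies in a field containing $\alpha$, and $\alpha$ already generates $\Fqto k$ over $\Fq$ because $f$ is irreducible of degree $k$, so the coefficient degree cannot drop below $k$). As a consistency check one can also recover the formula directly from $f(X^n) = \prod_{j=0}^{k-1}\of{X^n - \alpha^{q^j}}$ over $\Fqto k$ together with $X^n - \alpha^{q^j} = \prod_R \frobenius{(R^Q)}{j}$, which reassembles into $\prod_R \spin q {R^Q}$.
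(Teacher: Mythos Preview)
Your proof is correct and follows exactly the paper's approach: the paper simply states that the proposition is obtained by combining \Cref{Mullin2010: Lemma 13} with \Cref{lemma: X^n-alpha from X^n-1 for beta^n = alpha}, which is precisely what you do. Your additional remarks about the well-definedness of $\spin q{R^Q}$ and the direct consistency check are sound and more detailed than anything the paper spells out.
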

 
 It is well known that  there exists an element \(\beta\in \Fqto k\) such that \(\beta^n=\alpha\) if and only if \(\alpha^{\frac{q^k-1}{\gcd(n,q^k-1)}} = 1\). If the order of \(\alpha\) is large and \(n\) is not coprime with \(q^k-1\), it is likely that such an element \(\beta\) does not exist. Therefore, we need to find the factorization of \(X^n-\alpha\) over \(\Fqto k\) for \(\alpha \neq 1\) in general. 
 
 \changed{Many authors include the number of irreducible factors in the statement of their factorizations. However, the number of irreducible factors of \(f(X^n)\) over \(\Fq\)  has been determined in \cite{Butler1955} and we omit this number in our results. 
 
 \begin{ntheorem}[\cite{Butler1955}]\label{Butler1955}
 	Let \(n\) be a positive integer such that \(\gcd(n,q)=1\) and  \(f\in \FqX\) be an irreducible polynomial  over \(\Fq\) of degree \(k\) and order \(e\). Further, let \(n = n_1\cdot n_2\), where \(\rad(e)\mid n_1\) and \(\gcd(n_2,e)=1\). Then
 	\begin{enumerate}[(i)]
 		\item Each root of \(f(X^n)\) has order \(d\cdot n_1\cdot e\) for a divisor \(d\) of \(n_2\).
 		\item For every divisor \(d\) of \(n_2\), there exist exactly \(k\cdot n_1 \cdot \frac{\varphi(d)}{\ord_{d \cdot n_1 \cdot e}(q)}\) irreducible factors of \(f(X^n)\) over \(\Fq\). Each of these factors has degree \(\ord_{d \cdot n_1 \cdot e}(q)\).
 	\end{enumerate}
 \end{ntheorem}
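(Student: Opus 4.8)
The plan is to pass from the polynomial $f(X^n)$ to the multiplicative orders of its roots in $\overline{\F}_q$, and to reduce the count of irreducible factors to a counting problem among roots of unity. Fix a root $\alpha$ of $f$, so $\ord(\alpha)=e$. First I would record the structural facts: since $\gcd(n,q)=1$, every $X^n-\gamma$ with $\gamma\in\overline{\F}_q^\ast$ is separable, and from $f(X^n)=\prod_{j=0}^{k-1}(X^n-\alpha^{q^j})$ over $\Fqto k$ — whose $k$ factors are separable and pairwise coprime — it follows that $f(X^n)$ is squarefree with exactly $nk$ roots, partitioned into the $k$ fibres $\{\beta:\beta^n=\xi\}$, one for each root $\xi$ of $f$, each of size $n$. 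Since the Frobenius preserves multiplicative order, all roots of a fixed monic irreducible factor of $f(X^n)$ over $\Fq$ share one order $r$, and such a factor has degree $[\Fq(\beta):\Fq]=\ord_r(q)$ for any of its roots $\beta$. Hence it suffices to compute, for each attainable order $r$, the number $A(r)$ of roots of $f(X^n)$ of order exactly $r$: the number of irreducible factors with root-order $r$ is then $A(r)/\ord_r(q)$, and each has degree $\ord_r(q)$.

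For part (i), let $\beta$ be a root of $f(X^n)$ and put $\xi=\beta^n$, so $\ord(\xi)=e$. From $\ord(\beta^n)=\ord(\beta)/\gcd(n,\ord(\beta))$ we get $\ord(\beta)=e\,g$ with $g:=\gcd(n,\ord(\beta))$, which divides $n$; thus $g=\gcd(n,e\,g)$. Comparing $p$-adic valuations: for a prime $p\mid e$ one has $\nu_p(e\,g)>\nu_p(g)$, so the equation $\nu_p(g)=\min\bigl(\nu_p(n),\nu_p(e\,g)\bigr)$ forces $\nu_p(g)=\nu_p(n)$, which under the hypotheses on $n_1$ and $n_2$ equals $\nu_p(n_1)$; for a prime $p\nmid e$ the relation only yields $\nu_p(g)\le\nu_p(n)$. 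Consequently $n_1\mid g$, the cofactor $d:=g/n_1$ divides $n_2$, and $\ord(\beta)=e\,n_1\,d$, which is (i). (For the statement to hold, $n_1$ must here be the $e$-part of $n$, i.e. $n_1=\prod_{p\mid e}p^{\nu_p(n)}$; one checks the hypotheses force this.) The same valuation computation gives $\gcd(n,e\,n_1\,d)=n_1\,d$ for every divisor $d$ of $n_2$, a fact used next.

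For part (ii), fix a divisor $d$ of $n_2$ and a root $\xi$ of $f$, and count the $\beta$ with $\beta^n=\xi$ and $\ord(\beta)\mid e\,n_1\,d$. All such $\beta$ lie in the cyclic group $\mu_{en_1d}$ of $(en_1d)$-th roots of unity (note $\gcd(en_1d,q)=1$), on which $x\mapsto x^n$ is $\gcd(n,en_1d)$-to-one onto the subgroup of order $(en_1d)/\gcd(n,en_1d)=e$, which contains $\xi$; hence $\xi$ has exactly $\gcd(n,en_1d)=n_1d$ preimages in $\mu_{en_1d}$. By (i) the attainable orders dividing $en_1d$ are precisely the $e\,n_1\,d'$ with $d'\mid d$, so if $A_\xi(d')$ denotes the number of $\beta$ with $\beta^n=\xi$ of order exactly $e\,n_1\,d'$, then $\sum_{d'\mid d}A_\xi(d')=n_1d$ for every $d\mid n_2$; Möbius inversion over the divisors of $d$ gives $A_\xi(d)=n_1\sum_{d'\mid d}\mu(d/d')\,d'=n_1\varphi(d)$. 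Summing over the $k$ roots $\xi$ of $f$ yields $A(e\,n_1\,d)=k\,n_1\,\varphi(d)$, which is positive, so every $d\mid n_2$ is realized; dividing by the common degree $\ord_{en_1d}(q)$ gives the asserted number of irreducible factors, each of degree $\ord_{en_1d}(q)$. I expect the only genuine work to be the prime-by-prime bookkeeping for $\gcd(n,\cdot)$ — especially verifying $\gcd(n,e\,n_1\,d)=n_1\,d$ — together with checking that the Möbius inversion runs over exactly the divisors of $d$, which is what part (i) supplies.
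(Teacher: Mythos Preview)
The paper does not prove this theorem: it is quoted from Butler (1955) as background and no argument is supplied, so there is nothing in the paper to compare your proof against. Evaluated on its own, your argument is correct. The reduction to counting roots of a given multiplicative order, the $p$-adic analysis of $g=\gcd(n,\ord(\beta))$ in part~(i), the computation $\gcd(n,e\,n_1\,d)=n_1d$, and the M\"obius inversion over the divisor lattice of $d$ in part~(ii) all go through as you describe.

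One small point deserves tightening. Your parenthetical remark that ``one checks the hypotheses force'' $n_1=\prod_{p\mid e}p^{\nu_p(n)}$ is not literally true for the hypotheses as the paper prints them. The conditions $\rad(e)\mid n_1$ and $\gcd(n_2,e)=1$ do force $\nu_p(n_1)=\nu_p(n)$ for every prime $p\mid e$, but they do \emph{not} prevent $n_1$ from also carrying prime factors coprime to $e$; for instance $n=12$, $e=3$, $n_1=6$, $n_2=2$ satisfies them, yet part~(i) is false for this choice (there are roots of order $9$). The intended condition is almost certainly $\rad(n_1)\mid e$ rather than $\rad(e)\mid n_1$, matching the paper's own convention in Theorem~14 and Theorem~16 (where it writes $\rad(n_1)\mid\ord(a)$); with that reading the decomposition is unique and your proof applies verbatim. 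You have correctly identified that the $e$-part interpretation is what makes the statement true; just be explicit that this is a correction of the printed hypothesis rather than a consequence of it.
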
 }

 \paragraph{Structure of this paper:}
 Our main result, \Cref{theorem: factorization X^n-a for gcd(n_q)=1}, is a closed formula for the factorization of \(X^n-a\) into monic irreducible polynomials over \(\Fq\) for any positive integer \(n\) and any element \(a\) of \(\Fq^\ast\).  First, we give a new proof for the factorization of \(X^n-a\) for the case \(\rad(n)\mid q-1\) and (\(4\nmid n\) or \(q\equiv 1 \pmod 4\)) in \Cref{subsection: factorization X^n-a special case rad(n) mid q-1}. This proof  yields one closed formula for this case, \Cref{theorem: factorization of X^n-a for rad(n) mid q-1_4 nmid n or q = 1 mod 4}. In \cite{WuYue2018constacyclic} the factorization for this case is given in three separate theorems and our \Cref{theorem: factorization of X^n-a for rad(n) mid q-1_4 nmid n or q = 1 mod 4}  combines and extends all of them \cite[Theorems 6(1), 8(1), 9(1)]{WuYue2018constacyclic}.  In \Cref{subsection: factorization X^n-a gcd(n q)=1} we combine \Cref{theorem: factorization of X^n-a for rad(n) mid q-1_4 nmid n or q = 1 mod 4} and \Cref{KK11: Lemma 1} to obtain a closed formula for the factorization of \(X^n-a\) for any positive integer \(n\) such that \(\gcd(n,q)=1\) and any \(a\in \Fq^\ast\). This is our main theorem,  \Cref{theorem: factorization X^n-a for gcd(n_q)=1}. In \Cref{subsection: factorization X^n-1 and cyclotomic polynomial} we derive a closed formula for the factorization of \(X^n-1\) (see \Cref{theorem: factorization of X^n-1 for gcd(n q)=1}) and a closed formula for the factorization of \(\Phi_n \) for any positive integer \(n\) such that \(\gcd(n,q)=1\)  from our main theorem. 
 Furthermore, in \Cref{section: factorization of f(X^n)} we combine our main theorem and \Cref{Mullin2010: Lemma 13} and give a closed formula for the  factorization of \(f(X^n)\) for any irreducible polynomial \(f\in \FqX\), \(f\neq X\), and any positive integer \(n\) such that \(\gcd(n,q)=1\) (see \Cref{theorem: factorization of f(X^n) for gcd(n q)=1}).

The results on the factorization of \(X^n-1\), \(X^n-a\) and \(f(X^n)\) given in \cite{Brochero-MartinezGiraldo-VergaradeOliveira2015, WuYueFan2018,WuYue2018constacyclic, WuYue2021, Brochero-MartinezReisSilva-Jesus2019} are proved with a top-down approach. More precisely, for a polynomial \(g\) over \(\Fq\), the authors first name the irreducible factors over \(\Fq\). Then, they show that the named polynomials are irreducible over \(\Fq\) and factors of \(g\). In the last step, the argument that the sum of the degrees of the named polynomials equals the degree of \(g\) concludes their proof.

We use a bottom-up approach and prove our results in \Cref{section: factorization of X^n-a} via induction.  More precisely,  if \(n=p_1\cdots p_m\) is the factorization of \(n\) into prime factors, we decompose \begin{equation*}
	X^{p_1}-\alpha, \; \left(X^{p_2}\right)^{p_1} -\alpha, \;\left( \left(X^{p_3}\right)^{p_2}\right)^{p_1}-\alpha, \ldots, \left(\ldots  \left(X^{p_m} \right)^{p_{m-1}} \ldots \right) ^{p_1}-\alpha 
\end{equation*} inductively. The advantage of this method is that the proof itself shows why the factors are given as they are \changed{and it allows us to give one closed formula for all \(n\in \N\) such that \(\gcd(n,q)=1\) and all \(a\in \Fq^\ast\).}

\section{The factorization of $X^n-a$}
\label{section: factorization of X^n-a}

Let \(a\) be an element of \(\Fq^{\ast}\). \changed{Then the factorization of \(X^n-a\) is known for positive integers \(n\) such that \(\rad(n)\mid q-1\) (see \cite[Theorem 6, Theorem 8, Theorem 9]{WuYue2018constacyclic})} or \(\rad(n)\mid q^w-1\) for a prime \(w\) and \(n=p_1^{m_1}\cdot p_2^{m_2}\cdot p_3^{m_3}\), where \(p_1,p_2,p_3\) are distinct primes (see \cite{RakphonChongchitmatePhuto2022}). In this section we present the explicit factorization of \(X^n-a\) over \(\Fq\) for any positive integer \(n\). 

\begin{nremark}\label{remark: gcd(n q)>1 for X^n-a}
	Without loss of generality we can assume that \(\gcd(n,q)=1\). Indeed, if \(\gcd(n,q)>1\), then \(n=\tilde n \cdot \char(\Fq)^{l}\) for positive integers \(\tilde n , l\). Then there exists an element \(b \in \Fq\) such that \(b^{\char(\Fq)^l}=a\) and   the polynomial \(X^n-a\) equals \((X^{\tilde n}-b)^{\char(\Fq)^l}\).
\end{nremark}

We prove our results by induction on the number of  (not necessarily distinct) prime factors of \(n\). As base case we study the polynomial \(X^p-a\) for a prime \(p\) such that \(\gcd(p,q)=1\). Then there exists an element \(b\in \overline \F_q\) such that   \(b^p=a\) and  the  polynomial \(X^p-a\) splits as  \(\prod_{j=0}^{p-1} (X-\cyclgen p^j b)\), where \(\cyclgen{p}\) is a \(p\)-th primitive root of unity over \(\Fq\). By \Cref{Mullin2010: Lemma 13}, the factorization of \(X^p-a\) over \(\Fq\) depends on the  degree of the coefficients of \((X-\cyclgen p^j b)\) over \(\Fq\). Since \(\coeffdeg{q}{X-\cyclgen p^j b} = \ord_{\ord(\cyclgen p^j b)}(q)\), it is determined by the order of the roots \(\cyclgen p^j b\). The following proposition specifies this order for the two cases \(p\mid \ord(a)\) and \(p \nmid \ord(a)\).

\begin{proposition}\label{proposition: ord(b) s.t. b^p=a}
	Let \(a\in \Fq^{\ast}\) and \(p\) prime such that \(\gcd(p,q)=1\). Then the following statements hold: 
	\begin{enumerate}[(i)]
		\item If \(p \nmid \ord(a)\), then there exists a positive integer \(r\) such that   \((a^r)^p =a\) and \(\ord(a^r)=\ord(a)\). \changed{If \(a=1\), then \(r=1\), else \(r\) satisfies \(rp\equiv 1\pmod{\ord(a)}\).} Furthermore, \(\ord(\cyclgen p^j a^r)=p  \cdot \ord(a)\) for all \(1\leq j \leq p-1\). \label{proposition: b^p=a case p nmid ord(a)}
		\item If \(p \mid \ord(a)\), then every root \(b\in \overline \F_q\) of the polynomial \(X^p-a\) has order \(\ord(b)=p\cdot \ord(a)\). 
		\label{proposition: b^p=a case p mid ord(a)}
	\end{enumerate}
\end{proposition}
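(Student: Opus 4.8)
The plan is to set $e := \ord(a)$ and reduce everything to two elementary facts about orders in a cyclic group, applied inside a suitable finite extension of $\Fq$: first, for any element $g$ of finite multiplicative order and any integer $k$, $\ord(g^k) = \ord(g)/\gcd(\ord(g),k)$; second, if $x,y$ lie in an abelian group with $\gcd(\ord(x),\ord(y))=1$, then $\ord(xy) = \ord(x)\cdot\ord(y) = \lcm(\ord(x),\ord(y))$. Since $\cyclgen p$, $a$, and any root $b\in\overline{\F}_q$ of $X^p-a$ all lie in a common finite extension field of $\Fq$, whose multiplicative group is cyclic, both facts are available.

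For part (i), I would assume $p\nmid e$, so $\gcd(p,e)=1$. If $a=1$, then $e=1$ and $r=1$ works trivially. Otherwise $e\geq 2$ and, since $p$ is invertible modulo $e$, one may pick a positive integer $r$ with $rp\equiv 1\pmod e$; then $(a^r)^p = a^{rp} = a\cdot (a^{e})^{(rp-1)/e} = a$ because $a^e=1$ and $e\mid rp-1$. The congruence $rp\equiv 1\pmod e$ also forces $\gcd(r,e)=1$, so the first fact gives $\ord(a^r) = e/\gcd(r,e) = e = \ord(a)$. Finally, for $1\leq j\leq p-1$ the element $\cyclgen p^{\,j}$ has order exactly $p$ (as $p$ is prime), and $\gcd(p,e)=1$, so the second fact yields $\ord(\cyclgen p^{\,j} a^r) = p\cdot\ord(a^r) = p\cdot\ord(a)$.

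For part (ii), I would assume $p\mid e$ and take any root $b\in\overline{\F}_q$ of $X^p-a$, so $b^p=a$; put $d := \ord(b)$. By the first fact, $\ord(a) = \ord(b^p) = d/\gcd(d,p)$, and since $p$ is prime, $\gcd(d,p)\in\{1,p\}$. If $\gcd(d,p)=1$, then $d=\ord(a)=e$; but $p\mid e=d$ would then contradict $\gcd(d,p)=1$. Hence $\gcd(d,p)=p$, i.e.\ $p\mid d$, and so $\ord(a) = d/p$, giving $d = p\cdot\ord(a)$ for every root $b$, as claimed.

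I do not anticipate a serious obstacle: the whole content is order arithmetic in a cyclic group. The only points that require care are the case split in (i) between $a=1$ (where the congruence $rp\equiv 1\pmod e$ is vacuous and one simply takes $r=1$) and $a\neq1$, and, throughout, making explicit that $\cyclgen p$, $a$, and $b$ may be regarded as elements of a single cyclic multiplicative group so that the product‑of‑coprime‑orders statement is legitimate.
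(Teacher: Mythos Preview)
Your argument is correct and is essentially the same as the paper's: both rely on the identity \(\ord(b^p)=\ord(b)/\gcd(\ord(b),p)\) to force \(\ord(b)\in\{\ord(a),\,p\cdot\ord(a)\}\), rule out the first option in (ii) by the assumption \(p\mid\ord(a)\), and handle (i) via the coprime-order product rule after choosing \(r\) with \(rp\equiv 1\pmod{\ord(a)}\). The only cosmetic difference is that you derive \(\ord(a^r)=\ord(a)\) from \(\gcd(r,\ord(a))=1\), whereas the paper infers it from \(a^r\in\langle a\rangle\).
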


\begin{proof}
	Note that for any element \(b \in \overline{ \F}_q\) such that \(b^p=a\), the order of \(b\) satisfies 
	\begin{equation}\label{eq: order of b s.t. b^p=a}
		\ord(b)= \ord(a)\cdot \gcd(\ord(b),p) \in \{\ord(a), \ord(a)\cdot p\}.
	\end{equation}
	\begin{enumerate}[(i)]
		\item If \(p \nmid \ord(a)\) \changed{and \(\ord(a)>1\),} then there exists \(r\in \N\) such that \(rp\equiv 1 \pmod {\ord(a)}\) and the element \(a^r\in \Fq\) satisfies \((a^r)^p = a\). \changed{If \(a=1\), then obviously \((a^1)^p=a\) and we set \(r=1\).} Since \(a^r\) is an element of the multiplicative group \(\langle a \rangle\leq \Fq^{\ast}\), its order divides \(\ord(a)\). This fact combined with \cref{eq: order of b s.t. b^p=a} yields that \(\ord(a^r)=\ord(a)\). For every \(1 \leq j \leq p-1\), the element \(\cyclgen{p}^j\)  has order \(p\), which is coprime with \(\ord(a)\). Therefore, we can conclude that \(\ord(\cyclgen p^j a^r)=p \cdot \ord(a)\) for \(1\leq j \leq p-1\).
		\item If \(p \mid \ord(a)\), then with \cref{eq: order of b s.t. b^p=a} also \(p \mid \ord(b)\). Suppose that \(\ord(b)=\ord(a)\). With the fact that \(a = b^p\), we know that \(\ord(a) = \frac {\ord(b)}{\gcd(\ord(b),p)} = \frac {\ord(b)} p = \frac {\ord(a)} p\), a contradiction. Consequently, our assumption must have been false and we have \(\ord(b)= \ord(a)\cdot p\).
	\end{enumerate}
\end{proof}

Note that, with \Cref{proposition: ord(b) s.t. b^p=a}, for  a prime \(p\) such that \(p \mid q-1\) and \(p \nmid \ord(a)\), the polynomial \(X^p-a\) always decomposes in \(\FqX\). Furthermore, in both cases, \(p \nmid \ord(a)\) and \(p \mid \ord(a)\), there exists a root of \(X^p-a\) of order \(p \cdot \ord(a)\). We can easily derive the following corollary.

\begin{corollary}\label{theorem: existence of b s.t. b^p=a in Fq}
	Let \(a\in \Fq^{\ast}\) and  \(p\) prime such that \(p \mid (q-1)\). Then there exists \(b\in \Fq\) such that \(b^p=a\) if and only if \(p \cdot \ord(a)\mid (q-1)\).
\end{corollary}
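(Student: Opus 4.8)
The plan is to deduce the equivalence directly from \Cref{proposition: ord(b) s.t. b^p=a} by distinguishing the two cases $p\nmid\ord(a)$ and $p\mid\ord(a)$. The only background fact needed is that $\Fq^\ast$ is cyclic of order $q-1$, so that an element $b\in\overline\F_q$ lies in $\Fq$ precisely when $\ord(b)\mid q-1$; also note that the roots of $X^p-a$ in $\overline\F_q$ are exactly the elements $b$ with $b^p=a$.

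In the case $p\nmid\ord(a)$, \Cref{proposition: ord(b) s.t. b^p=a}~part~(i) already produces an element $b=a^r\in\Fq$ with $b^p=a$, so the left-hand side of the equivalence holds. For the right-hand side, from $\ord(a)\mid q-1$, $p\mid q-1$ and $\gcd(p,\ord(a))=1$ we get $p\cdot\ord(a)=\lcm(p,\ord(a))\mid q-1$. Hence both statements are true and the equivalence is immediate in this case.

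In the case $p\mid\ord(a)$, \Cref{proposition: ord(b) s.t. b^p=a}~part~(ii) tells us that every root $b\in\overline\F_q$ of $X^p-a$ has order exactly $p\cdot\ord(a)$. Such a root lies in $\Fq$ if and only if $p\cdot\ord(a)\mid q-1$; and since all roots share this same order (and, because $p\mid q-1$, one of them lying in $\Fq$ forces all of them to), the existence of some $b\in\Fq$ with $b^p=a$ is equivalent to $p\cdot\ord(a)\mid q-1$, which is exactly the claim.

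I do not anticipate any real obstacle: the argument is essentially a bookkeeping check against \Cref{proposition: ord(b) s.t. b^p=a}. The only subtlety worth flagging is that the two cases behave quite differently — when $p\nmid\ord(a)$ a $p$-th root of $a$ always exists in $\Fq$ and the divisibility condition is automatically satisfied, so the equivalence is vacuous; whereas when $p\mid\ord(a)$ the condition $p\cdot\ord(a)\mid q-1$ is precisely what decides whether the (necessarily order-$p\cdot\ord(a)$) $p$-th roots of $a$ already live in $\Fq$ or only in a proper extension field.
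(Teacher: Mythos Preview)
Your proof is correct and relies on the same key input as the paper, namely \Cref{proposition: ord(b) s.t. b^p=a}. The paper argues the two implications directly without an explicit case split: for $(\Leftarrow)$ it notes that all roots have order in $\{\ord(a),\,p\cdot\ord(a)\}$ and hence lie in $\Fq$, and for $(\Rightarrow)$ it uses that $\cyclgen p\in\Fq$ forces all roots into $\Fq$ once one is, then invokes the existence of a root of order $p\cdot\ord(a)$; your case distinction $p\nmid\ord(a)$ versus $p\mid\ord(a)$ is just a different packaging of the same content.
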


\begin{proof}
	If \(p \cdot \ord(a)\mid (q-1)\), then with \Cref{proposition: ord(b) s.t. b^p=a} all roots of \(X^p-a\) are elements of \(\Fq\). It remains to argue that if one root \(b\) of \(X^p-a\) lies in \(\Fq\), then all roots \(\cyclgen p^j b\) for \(0\leq j \leq p-1\) are elements of \(\Fq\), because \(\cyclgen p \in  \Fq\). Thus, the order of every root of \(X^p-a\) divides \(q-1\) and in particular the order \(p \cdot \ord(a)\).
\end{proof}

As the following theorem by Serret, \Cref{Serret1866: Theorem Irreducibility of X^n-a}, shows, the irreducibility of the binomial \(X^t-a\) for a positive integer \(t\) depends mainly on the order of \(a\). Since \Cref{proposition: ord(b) s.t. b^p=a} and \Cref{theorem: existence of b s.t. b^p=a in Fq} give the order of the roots of \(X^p-a\), they are the key elements for the theorems to come.  
	
	\begin{theorem}[{\cite[Theorem 3.75]{LN1994}}]
		\label{Serret1866: Theorem Irreducibility of X^n-a}
		Let \(t\geq 2\). Then the binomial \(X^t-a\) is irreducible in \(\Fq[X]\) if and only if  the following three conditions are satisfied:
		\begin{enumerate}[(i)]
			\item \(\rad(t)\mid \ord(a)\),
			\item \(\gcd( t, \frac {q-1} {\ord(a)}) = 1\),
			\item \(4\nmid t\) or  \(q\equiv 1 \pmod 4\).
		\end{enumerate}
	\end{theorem}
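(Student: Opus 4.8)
Serret's criterion is classical, so I sketch the route I would take to reproduce it. The backbone is the Vahlen--Capelli theorem: over any field $K$ and for $a\in K^\ast$, the binomial $X^t-a$ is irreducible over $K$ if and only if $a$ is not a $p$-th power in $K$ for every prime $p\mid t$, and moreover $a\notin -4K^4$ whenever $4\mid t$. Granting this, the proof splits into three steps: establish the Capelli reduction; translate the ``$p$-th power'' conditions into the arithmetic statements (i)--(ii) using the structure of $\Fq^\ast$; and analyze the exceptional divisor $4$ to obtain (iii). One may assume $\gcd(t,q)=1$, since otherwise $X^t-a=(X^{t'}-b)^{\char(\Fq)^l}$ is reducible and conditions (i)--(ii) also fail.

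For the Capelli reduction I would argue by strong induction on the number of prime factors of $t$. Writing $t=p\,t'$ with $p$ prime, one relates the factorization of $X^t-a$ over $K$ to the factorization of $X^{t'}-a$ over $K$ together with that of $X^p-\gamma$ over $K(\gamma)$ for $\gamma$ a root of an irreducible factor of $X^{t'}-a$; the base case $t=p$ reduces to the standard fact that $X^p-a$ is irreducible exactly when $a\notin K^p$. The powers of $2$ must be handled jointly, because $X^{2^k}-a$ can fail to be irreducible even when $a$ is a non-square --- precisely when $a\in -4K^4$. This is the classical Abel/Capelli phenomenon, and I expect it to be the main obstacle: keeping track of the $2$-part of $t$ and of the exceptional set $-4K^4$ is the delicate bookkeeping in the whole argument. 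A self-contained finite-field alternative would instead compute $[\Fq(\beta):\Fq]=\ord_{\ord(\beta)}(q)$ for a root $\beta$ of $X^t-a$, bootstrapping from \Cref{proposition: ord(b) s.t. b^p=a}; but the power-of-two case would still need separate care.

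For the translation step, the subgroup of $p$-th powers has index $\gcd(p,q-1)$ in $\Fq^\ast$. Thus if $p\nmid q-1$ every element of $\Fq^\ast$, in particular $a$, is a $p$-th power and $X^p-a$ is reducible, while if $p\mid q-1$ then $a$ is a $p$-th power if and only if $a^{(q-1)/p}=1$, i.e. $\ord(a)\mid (q-1)/p$, i.e. $p\mid (q-1)/\ord(a)$. Hence ``$a\notin\Fq^p$ for all primes $p\mid t$'' is equivalent to: for every such $p$ one has $p\mid\ord(a)$ (which via $\ord(a)\mid q-1$ also gives $p\mid q-1$) and $p\nmid(q-1)/\ord(a)$. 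Ranging over all $p\mid t$, the first clause is exactly $\rad(t)\mid\ord(a)$, which is (i), and the second is $\gcd(\rad(t),(q-1)/\ord(a))=1$, which is (ii) because $\gcd(t,m)=1\iff\gcd(\rad(t),m)=1$.

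Finally, suppose $4\mid t$; then (i)--(ii) already force $2\mid\ord(a)$ and $v_2(q-1)=v_2(\ord(a))$, so $a$ is a non-square. If $q\equiv1\pmod 4$ then $i:=\sqrt{-1}\in\Fq$ and $-4=(1+i)^4\in\Fq^4$, so the Capelli condition ``$a\notin-4\Fq^4$'' becomes ``$a$ is not a fourth power'', which holds since $a$ is not even a square; thus no extra hypothesis is needed. If $q\equiv3\pmod 4$ then $\gcd(4,q-1)=2$, so $\Fq^4=\Fq^2$ and $-4\Fq^4$ is the coset of non-squares (as $-4$ is itself a non-square here); since $a$ is a non-square, $a\in-4\Fq^4$ and $X^t-a$ is reducible regardless. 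Hence when $4\mid t$ the Capelli condition holds precisely when $q\equiv1\pmod 4$, and when $4\nmid t$ it is vacuous --- in both cases this is condition (iii). Combining the three steps yields the stated equivalence.
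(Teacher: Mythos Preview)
The paper does not give its own proof of this statement: it is quoted verbatim from Lidl--Niederreiter as \cite[Theorem~3.75]{LN1994} and used as a black box, so there is no in-paper argument to compare against. Your sketch is correct and follows the standard route (Vahlen--Capelli plus the cyclic structure of $\Fq^\ast$); the translation of ``$a\notin\Fq^p$'' into conditions (i)--(ii) and the case split on $q\bmod 4$ for condition (iii) are both accurate, including the identification $-4\Fq^4=\Fq^4$ when $q\equiv 1\pmod 4$ via $(1+i)^4=-4$ and $-4\Fq^4=\Fq^\ast\setminus\Fq^2$ when $q\equiv 3\pmod 4$.

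The only soft spot is that you explicitly flag the Capelli reduction itself as something you would still need to carry out, and you correctly anticipate that the $2$-primary part is where the work lies. Since you are invoking Vahlen--Capelli as a known theorem (as Lidl--Niederreiter also do, essentially), this is not a gap so much as an honest declaration of where the real content sits. If you wanted a fully self-contained finite-field proof avoiding the general Capelli statement, the paper's own \Cref{proposition: ord(b) s.t. b^p=a}, \Cref{theorem: existence of b s.t. b^p=a in Fq}, and \Cref{theorem: irreducibility of X^(tp)-a} give exactly the inductive step you describe for odd primes and for the first factor of $2$; only the passage from $X^{2t}-a$ irreducible to $X^{4t}-a$ irreducible under $q\equiv 1\pmod 4$ would remain to be done by hand.
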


\subsection{The factorization of $X^n-a$ for positive integers $n$\\ such that  $\mathrm{rad}(n)\mid q-1$ and  ($4\nmid n$ or $q\equiv 1 \mod 4$)}
\label{subsection: factorization X^n-a special case rad(n) mid q-1}

\changed{The factorization of \(X^n-a\)  for the case \(\rad(n)\mid q-1\) and (\(4\nmid n\) or \(q\equiv 1 \pmod 4\)) from  \cite{WuYue2018constacyclic} is given in three separate theorems for three types of \(a\in \Fq^\ast\).  In this section we give a new proof for this case which, due to its inductive nature, yields one closed formula for all \(a\in \Fq^\ast\).} 

In the next theorem we combine \Cref{theorem: existence of b s.t. b^p=a in Fq} and \Cref{Serret1866: Theorem Irreducibility of X^n-a} to obtain a result which can be applied recursively. More precisely, given an irreducible polynomial \(X^t-a\) over \(\Fq\), \Cref{theorem: irreducibility of X^(tp)-a}  gives an easy irreducibility criterion for the polynomial \(X^{tp}-a\), where \(p\) is a prime and divides \(q-1\). 

\begin{proposition}\label{theorem: irreducibility of X^(tp)-a}
	Let \(a\in \Fq^\ast\), \(p\) be a prime such that \(p \mid (q-1)\) and let \(t\in \N\) such that \(X^t-a\) is irreducible over \(\Fq\). Further, let  \(4\nmid tp\) or  \(q \equiv 1 \pmod 4\). Then \(X^{tp}-a\) is irreducible over \(\Fq\) if and only if \(p \cdot \ord(a)\nmid (q-1)\).
\end{proposition}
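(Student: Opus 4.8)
The plan is to apply Serret's criterion, \Cref{Serret1866: Theorem Irreducibility of X^n-a}, to the binomial \(X^{tp}-a\), exploiting the hypothesis that \(X^t-a\) is irreducible to pin down the relevant data for \(t\). I would prove the two implications separately.

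For the direction ``\(X^{tp}-a\) irreducible \(\Rightarrow p\cdot\ord(a)\nmid (q-1)\)'' I would argue by contraposition. If \(p\cdot\ord(a)\mid (q-1)\), then \Cref{theorem: existence of b s.t. b^p=a in Fq} yields some \(b\in\Fq\) with \(b^p=a\); moreover \(\cyclgen{p}\in\Fq\) since \(p\mid (q-1)\). Substituting \(Y=X^t\) in the identity \(Y^p-b^p=\prod_{j=0}^{p-1}(Y-\cyclgen{p}^{j}b)\) over \(\Fq\) gives
\[
	X^{tp}-a \;=\; X^{tp}-b^p \;=\; \prod_{j=0}^{p-1}\bigl(X^t-\cyclgen{p}^{j}b\bigr),
\]
a nontrivial factorization of \(X^{tp}-a\) into \(p\ge 2\) polynomials of degree \(t\ge 1\) over \(\Fq\); in particular \(X^t-b\) is a proper divisor, so \(X^{tp}-a\) is reducible.

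For the converse, assume \(p\cdot\ord(a)\nmid (q-1)\); I would check the three conditions of \Cref{Serret1866: Theorem Irreducibility of X^n-a} for \(X^{tp}-a\). Condition (iii) (\(4\nmid tp\) or \(q\equiv 1\pmod 4\)) is a standing hypothesis. Two elementary observations handle the rest. First, \(p\mid\ord(a)\): otherwise \(\gcd(p,\ord(a))=1\), so \(p\cdot\ord(a)=\lcm(p,\ord(a))\) would divide \(q-1\) (both \(p\) and \(\ord(a)\) do), contradicting the assumption. Second, \(p\nmid\frac{q-1}{\ord(a)}\): otherwise \(p\cdot\ord(a)\) would divide \(\ord(a)\cdot\frac{q-1}{\ord(a)}=q-1\). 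Now, since \(X^t-a\) is irreducible, Serret's criterion for \(t\) gives \(\rad(t)\mid\ord(a)\) and \(\gcd\bigl(t,\frac{q-1}{\ord(a)}\bigr)=1\). Writing \(\rad(tp)=\lcm(\rad(t),p)\), the first observation together with \(\rad(t)\mid\ord(a)\) yields \(\rad(tp)\mid\ord(a)\), i.e.\ condition (i); and since \(\gcd(tp,m)=1\) iff \(\gcd(\rad(tp),m)=1\), the second observation together with \(\gcd(t,\frac{q-1}{\ord(a)})=1\) yields \(\gcd\bigl(tp,\frac{q-1}{\ord(a)}\bigr)=1\), i.e.\ condition (ii). Hence \(X^{tp}-a\) is irreducible.

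The argument is essentially routine number-theoretic bookkeeping, and I do not expect a serious obstacle. The one point that needs a little care is that the radical and gcd computations for \(tp\) must be handled uniformly regardless of whether \(p\mid t\) — this is precisely why it is convenient to write \(\rad(tp)=\lcm(\rad(t),p)\) rather than splitting into cases — and one should note that condition (iii) for \(tp\) is inherited from the hypothesis of the proposition, not from the irreducibility of \(X^t-a\).
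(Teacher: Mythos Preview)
Your proof is correct. The forward direction matches the paper's argument verbatim. For the converse, however, you take a shorter path than the paper: the paper first establishes that \(X^{p}-a\) is irreducible over \(\Fq\) via a field-theoretic degree argument (all roots of \(X^{p}-a\) generate the same extension, so the common degree must be \(p\)), and only then invokes \Cref{Serret1866: Theorem Irreducibility of X^n-a} for \(X^{p}-a\) to extract \(p\mid\ord(a)\) and \(\gcd(p,\frac{q-1}{\ord(a)})=1\). You instead derive these two facts directly from the numerical hypothesis \(p\cdot\ord(a)\nmid(q-1)\), bypassing the intermediate irreducibility of \(X^{p}-a\) entirely. Your route is slightly more economical; the paper's route has the mild side benefit of making the irreducibility of \(X^{p}-a\) explicit, which is not needed here but resonates with the inductive philosophy of the surrounding section. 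One small remark: \Cref{Serret1866: Theorem Irreducibility of X^n-a} is stated for \(t\ge 2\), so when you invoke it for \(X^{t}-a\) you should note (as the paper tacitly does) that for \(t=1\) the conclusions \(\rad(t)\mid\ord(a)\) and \(\gcd(t,\frac{q-1}{\ord(a)})=1\) hold trivially.
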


\begin{proof}
	We show that \(X^{tp}-a\) is irreducible over \(\Fq\) if and only if there does not exist \(b\in \Fq\) such that \(b^p=a\). Then \Cref{theorem: existence of b s.t. b^p=a in Fq} completes the statement. 
	
	Suppose that there exists \(b\in \Fq\) such that \(b^p=a\). Then the polynomial \(X^{tp}-a\) in \(\FqX\) satisfies \(X^{tp}-a = \prod_{j=0}^{p-1} (X^t-\cyclgen p^j b)\) and it is reducible.
	
	Suppose that there does not exist \(b\in \Fq\) such that \(b^p=a\). Then all roots of \(X^p-a\)  lie in true extension fields of \(\Fq\). Let  \(b\in \Fq(b)\) be a root of \(X^p-a\). Then since \(\cyclgen p\in \Fq\), we have \(\cyclgen{p}^j b \in \Fq(b)\) for all \(1\leq j \leq p-1\). Vice versa, \(b= \cyclgen p^{-j} \cdot \cyclgen p^jb \in \Fq(\cyclgen p^j b)\). Thus, the degree of the minimal polynomial of every root of \(X^p-a\) is equal to \([\Fq(b):\Fq]\).  Since \(X^p-a\) is a product of the minimal polynomials of its roots over \(\Fq\), there exists a positive integer \(m\) such that \(p= m\cdot [\Fq(b):\Fq]\). The two facts that \(p\) is prime and \(b\) is not an element of \(\Fq\) imply that \([\Fq(b):\Fq]=p\) and \(X^p-a\) is irreducible over \(\Fq\).
	
	\Cref{Serret1866: Theorem Irreducibility of X^n-a}  yields that \(p \mid \ord(a)\) and \(\gcd(p, \frac {q-1}{\ord(a)})=1\). Furthermore, since \(X^t-a\) is irreducible over \(\Fq\), we have \(\rad(t)\mid \ord(a)\), \(\gcd(t, \frac {q-1}{\ord(a)})=1\). As a direct consequence we obtain that  \(\rad(tp)\mid \ord(a)\) and also \(\gcd(tp, \frac {q-1}{\ord(a)})=1\). Since we assumed that \(4 \nmid tp\) or \(q\equiv 1 \pmod 4\), the third condition of \Cref{Serret1866: Theorem Irreducibility of X^n-a} is satisfied and we can conclude that \(X^{tp}-a\) is irreducible over  \(\Fq\).
\end{proof}

Let \(p\) be a prime such that \(4\nmid tp\) or \(q\equiv 1 \pmod 4\). Then the following corollary states the useful fact that if \(X^t-a\) is irreducible and \(p \mid t\), the polynomial \(X^{tp}-a\) is irreducible.

\begin{corollary}\label{theorem: irreducibility of X^(tp)-a with p mid t}
	Let \(a\in \Fq^\ast\), \(p\) be a prime such that \(p \mid (q-1)\) and let \(t\in \N\) such that \(X^t-a\) is irreducible over \(\Fq\). Further, let  \(4\nmid tp\) or  \(q \equiv 1 \pmod 4\).  Then if \(p\) divides \(t\) the polynomial \(X^{tp}-a\) is irreducible over \(\Fq\).
\end{corollary}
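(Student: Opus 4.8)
The plan is to deduce this directly from \Cref{theorem: irreducibility of X^(tp)-a}, which already reduces the irreducibility of \(X^{tp}-a\) (under the standing hypothesis \(4\nmid tp\) or \(q\equiv 1\pmod 4\)) to the single arithmetic condition \(p\cdot\ord(a)\nmid (q-1)\). So all that remains is to verify that condition under the extra assumption \(p\mid t\).

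First I would note that \(p\mid t\) forces \(t\geq p\geq 2\), so \(X^t-a\) is a binomial of degree at least \(2\) and \Cref{Serret1866: Theorem Irreducibility of X^n-a} applies to it. From its irreducibility we extract the two relevant consequences: \(\rad(t)\mid \ord(a)\) and \(\gcd\!\big(t,\tfrac{q-1}{\ord(a)}\big)=1\). Since \(p\mid t\) we have \(p\mid \rad(t)\), hence \(p\mid \ord(a)\); and since \(p\mid t\) together with \(\gcd\!\big(t,\tfrac{q-1}{\ord(a)}\big)=1\), we get \(p\nmid \tfrac{q-1}{\ord(a)}\).

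Next I would argue by contradiction: if \(p\cdot\ord(a)\mid (q-1)\), then \(\tfrac{q-1}{\ord(a)}\) is an integer divisible by \(p\), contradicting \(p\nmid \tfrac{q-1}{\ord(a)}\). Therefore \(p\cdot\ord(a)\nmid (q-1)\), and applying \Cref{theorem: irreducibility of X^(tp)-a} (whose hypotheses \(p\mid (q-1)\), \(X^t-a\) irreducible, and \(4\nmid tp\) or \(q\equiv 1\pmod 4\) are all in force) yields that \(X^{tp}-a\) is irreducible over \(\Fq\).

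I do not anticipate a genuine obstacle here: the statement is essentially a bookkeeping consequence of the preceding proposition and Serret's criterion. The only point requiring a moment's care is the observation that \(p\mid t\) is exactly what upgrades \(\gcd\!\big(t,\tfrac{q-1}{\ord(a)}\big)=1\) into \(p\nmid \tfrac{q-1}{\ord(a)}\), which is the inequality that makes the \(\nmid\) condition of \Cref{theorem: irreducibility of X^(tp)-a} automatic; one should also remember to check that \(t\geq 2\) so that Serret's theorem is legitimately invoked.
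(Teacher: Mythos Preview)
Your argument is correct and follows exactly the paper's approach: use Serret's criterion on \(X^t-a\) to obtain \(\gcd\!\big(t,\tfrac{q-1}{\ord(a)}\big)=1\), deduce \(p\nmid \tfrac{q-1}{\ord(a)}\) from \(p\mid t\), conclude \(p\cdot\ord(a)\nmid(q-1)\), and apply \Cref{theorem: irreducibility of X^(tp)-a}. Your extra observation that \(p\mid\ord(a)\) is true but not needed, and your care about \(t\ge 2\) is fine (and automatic from \(p\mid t\)).
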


\begin{proof}
	Since \(X^t-a\) is irreducible over \(\Fq\), \Cref{Serret1866: Theorem Irreducibility of X^n-a} implies  that \(\gcd(t,\frac {q-1}{\ord(a)})=1\). If \(p \mid t\), then also \(\gcd(p, \frac {q-1}{\ord(a)})=1\) and \(p \cdot \ord(a)\) cannot divide \((q-1)\). \Cref{theorem: irreducibility of X^(tp)-a} completes our proof.
\end{proof}

\Cref{theorem: irreducibility of X^(tp)-a with p mid t} simplifies the check for the irreducibility of the polynomial \(X^{tp}-a\) significantly.  If \(p \mid t\), we do not need to compare the properties of \(t\) and \(\ord(a)\) as specified in \Cref{Serret1866: Theorem Irreducibility of X^n-a} (i) and (ii).

Using  \Cref{proposition: ord(b) s.t. b^p=a} and \Cref{theorem: irreducibility of X^(tp)-a}, the following theorem gives the factorization of \(X^{tp}-a\) for the case that the polynomial is reducible over \(\Fq\) and (\(4\nmid tp\) or \(q \equiv 1 \pmod 4\)).

\begin{proposition}\label{theorem: Factorization of X^(tp)-a reducible case}
	Let \(a\in \Fq^\ast\), \(p\) be a prime such that \(p \mid (q-1)\) and let \(t\in \N\) such that \(X^t-a\) is irreducible over \(\Fq\). Further,  let  \(4\nmid tp\) or \(q \equiv 1 \pmod 4\). Then if \(p\cdot \ord(a)\mid (q-1)\), the factorization of \(X^{tp}-a\) into monic irreducible factors over \(\Fq\) is
	\begin{equation*}
		X^{tp}-a = \prod_{j=0}^{p-1} (X^t-\cyclgen p^j b),
	\end{equation*}
	where \(b\in \Fq\) such that \(b^p=a\). 
	\begin{enumerate}[(i)]
		\item If \(p \nmid \ord(a)\),  then \(b=a^r\)  and \(\ord(b)= \ord(a)\)  for a positive integer \(r\) satisfying \changed{\(r=1\) if \(a=1\) or  \(rp \equiv 1 \pmod {\ord(a)}\) otherwise}. \label{item: X^{tp}-a order if p nmid ord(a)}
		\item If \(p \mid \ord(a)\), \(\ord(b)= p\cdot \ord(a)\). \label{item: X^{tp}-a order if p mid ord(a)}
	\end{enumerate}
	Furthermore, \(\ord(\cyclgen p^j b) = p \cdot \ord(a)\) for all \(1 \leq j \leq p-1\).
\end{proposition}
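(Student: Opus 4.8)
The plan is to establish the factorization identity first, then read off the orders of the roots from the earlier results. Since we have assumed $p \cdot \ord(a) \mid (q-1)$ and $p \mid (q-1)$, \Cref{theorem: existence of b s.t. b^p=a in Fq} guarantees that there exists $b \in \Fq$ with $b^p = a$. Because $p \mid (q-1)$, the field $\Fq$ also contains a primitive $p$-th root of unity $\cyclgen p$, so all $p$ elements $\cyclgen p^j b$ for $0 \le j \le p-1$ lie in $\Fq$ and are exactly the roots of $X^p - a$ in $\overline{\F}_q$. Substituting $X \mapsto X^t$ into the identity $X^p - a = \prod_{j=0}^{p-1}(X - \cyclgen p^j b)$ gives $X^{tp} - a = \prod_{j=0}^{p-1}(X^t - \cyclgen p^j b)$, with each factor lying in $\FqX$.

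Next I would argue that each $X^t - \cyclgen p^j b$ is monic irreducible over $\Fq$. For $j = 0$ this is the hypothesis that $X^t - a$ is irreducible. For $1 \le j \le p-1$, I would invoke \Cref{Serret1866: Theorem Irreducibility of X^n-a}: irreducibility of $X^t - a$ already gives $\rad(t) \mid \ord(a)$ and $\gcd(t, \frac{q-1}{\ord(a)}) = 1$; combined with $\ord(\cyclgen p^j b) = p \cdot \ord(a)$ (established below), the conditions $\rad(t) \mid p\cdot\ord(a)$ and $\gcd(t, \frac{q-1}{p\cdot\ord(a)}) = 1$ follow, and the hypothesis $4 \nmid tp$ or $q \equiv 1 \pmod 4$ (hence $4 \nmid t$ or $q \equiv 1 \pmod 4$) supplies the third condition. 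So each factor is irreducible over $\Fq$, and the displayed product is the factorization into monic irreducible factors.

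It remains to pin down the orders, which is mostly a matter of quoting \Cref{proposition: ord(b) s.t. b^p=a}. In case \eqref{item: X^{tp}-a order if p nmid ord(a)}, $p \nmid \ord(a)$, part \eqref{proposition: b^p=a case p nmid ord(a)} of that proposition says we may take $b = a^r$ with $\ord(a^r) = \ord(a)$, where $r = 1$ if $a = 1$ and $rp \equiv 1 \pmod{\ord(a)}$ otherwise, and that $\ord(\cyclgen p^j a^r) = p \cdot \ord(a)$ for $1 \le j \le p-1$. In case \eqref{item: X^{tp}-a order if p mid ord(a)}, $p \mid \ord(a)$, part \eqref{proposition: b^p=a case p mid ord(a)} says every root of $X^p - a$ — in particular $b$ itself and each $\cyclgen p^j b$ — has order $p \cdot \ord(a)$, which covers both the claim $\ord(b) = p\cdot\ord(a)$ and the final sentence.

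I do not expect a serious obstacle here: the statement is essentially an assembly of \Cref{theorem: existence of b s.t. b^p=a in Fq}, \Cref{proposition: ord(b) s.t. b^p=a}, and \Cref{Serret1866: Theorem Irreducibility of X^n-a}. The only point needing a little care is the logical ordering — one should establish the orders of the $\cyclgen p^j b$ before applying Serret's criterion to the factors with $1 \le j \le p-1$, so that condition (ii) of \Cref{Serret1866: Theorem Irreducibility of X^n-a} can be checked against $\ord(\cyclgen p^j b) = p\cdot\ord(a)$ rather than $\ord(a)$. A minor subtlety is verifying $\gcd(t, \frac{q-1}{p\cdot\ord(a)}) = 1$: since $\frac{q-1}{p\cdot\ord(a)}$ divides $\frac{q-1}{\ord(a)}$, this follows immediately from $\gcd(t, \frac{q-1}{\ord(a)}) = 1$.
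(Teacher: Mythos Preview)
Your overall approach is the paper's approach: invoke \Cref{theorem: existence of b s.t. b^p=a in Fq} to produce $b$, read off the orders from \Cref{proposition: ord(b) s.t. b^p=a}, and verify irreducibility of each factor via \Cref{Serret1866: Theorem Irreducibility of X^n-a}. There is, however, one genuine slip. The case $j=0$ does \emph{not} reduce to the hypothesis that $X^t - a$ is irreducible: the factor at $j=0$ is $X^t - b$, and $b^p = a$ does not say $b = a$. In case~(ii) one even has $\ord(b) = p\cdot\ord(a) > \ord(a)$, so certainly $b \neq a$ there; in case~(i) one has $b = a^r$, which equals $a$ only in degenerate situations.

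The fix is immediate and is exactly what the paper does: treat all $0 \le j \le p-1$ uniformly. From \Cref{proposition: ord(b) s.t. b^p=a} one has $\ord(\cyclgen p^j b) \in \{\ord(a),\, p \cdot \ord(a)\}$ for every $j$, hence $\ord(a) \mid \ord(\cyclgen p^j b)$ in all cases. Therefore $\rad(t) \mid \ord(a) \mid \ord(\cyclgen p^j b)$, and $\tfrac{q-1}{\ord(\cyclgen p^j b)}$ divides $\tfrac{q-1}{\ord(a)}$, so $\gcd\bigl(t, \tfrac{q-1}{\ord(\cyclgen p^j b)}\bigr) = 1$. Together with the hypothesis on $4$ and $q$, Serret's criterion gives irreducibility of $X^t - \cyclgen p^j b$ for every $j$, including $j=0$. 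Your own argument for $j \ge 1$ already contains the necessary ingredients; simply drop the incorrect shortcut at $j=0$ and run the same Serret check there, allowing $\ord(b)$ to be either $\ord(a)$ or $p\cdot\ord(a)$.
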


\begin{proof}
	From  \Cref{proposition: ord(b) s.t. b^p=a} and \Cref{theorem: irreducibility of X^(tp)-a} follows directly that 
	\(X^{tp}-a = \prod_{j=0}^{p-1} (X^t-\cyclgen p^j b)\) and that  statements (\ref{item: X^{tp}-a order if p nmid ord(a)}) and (\ref{item: X^{tp}-a order if p mid ord(a)}) hold. It remains to prove that \(X^t-\cyclgen p^jb\) is irreducible for every \(0\leq j \leq p-1\). Since \(X^t-a\) is irreducible, from \Cref{Serret1866: Theorem Irreducibility of X^n-a} follows that \(\rad(t)\mid \ord(a)\) and \(\gcd(t, \frac{q-1}{\ord(a)})=1\). Consequently,  \(\rad(t)\mid \ord(\cyclgen p^j b)\), because \(\ord(\cyclgen p^j b)\in \{\ord(a), p \cdot \ord(a)\}\). Furthermore, \(\frac {q-1}{\ord(\cyclgen p^j b)}\) divides \(\frac{q-1}{\ord(a)}\) which implies that \(\gcd(t, \frac {q-1}{\ord(\cyclgen{p}^jb)})=1\) and \(X^{t}-\cyclgen p^j b\) is irreducible with \Cref{Serret1866: Theorem Irreducibility of X^n-a}. 
\end{proof}

\Cref{theorem: irreducibility of X^(tp)-a} and  \Cref{theorem: Factorization of X^(tp)-a reducible case} can be applied recursively and we use them for the induction step in the proofs of the next two theorems, \Cref{theorem: factorization X^(tn)-a with rad(n) mid ord(a)} and \Cref{theorem: factorization X^(tn)-a with gcd(n_ord(a))=1}. Note that \Cref{theorem: irreducibility of X^(tp)-a} and  \Cref{theorem: Factorization of X^(tp)-a reducible case} allow us to factor the polynomial \(X^{tp}-a\) simply by examining the order of the element \(a\) and comparing it with \(p\) and \(q-1\). This is why \Cref{theorem: Factorization of X^(tp)-a reducible case} (i) and (ii), which specify the orders of the new appearing coefficients, are important.

Propositions \ref{theorem: factorization X^(tn)-a with rad(n) mid ord(a)} and \ref{theorem: factorization X^(tn)-a with gcd(n_ord(a))=1} specify the factorization of the polynomial \(X^{tn}-a\) for the two cases \(\rad(n)\mid \ord(a)\) and  \(\gcd(n,\ord(a))=1\), where \(X^t-a\) is irreducible over \(\Fq\).

\begin{proposition}
	\label{theorem: factorization X^(tn)-a with rad(n) mid ord(a)}
	Let \(a\in \Fq^\ast\), \(t\in \N\) such that \(X^t-a\) is irreducible over \(\Fq\) and let \(n\in \N\) such that \(\rad(n)\mid \ord(a)\). Further,  let  \(4\nmid tn\) or \(q \equiv 1 \pmod 4\). Set \(d:= \gcd(n, \frac {q-1}{\ord(a)})\). Then the factorization of \(X^{tn}-a\) into monic irreducible factors over \(\Fq\) is
	\begin{equation*}
		\prod_{j=0}^{d-1} (X^{t\cdot \frac nd}- \cyclgen d^jb),
	\end{equation*} 	
	where \(b\in \Fq\) such that \(b^d=a\) and \(\ord(\cyclgen d^jb) = \ord(a)\cdot d\) for all \(0 \leq j \leq d-1\).
\end{proposition}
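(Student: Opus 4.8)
The strategy is induction on the number of prime factors of $n$, using Propositions \ref{theorem: irreducibility of X^(tp)-a}, \ref{theorem: irreducibility of X^(tp)-a with p mid t} and \ref{theorem: Factorization of X^(tp)-a reducible case} as the engine for each step. Write $n = p_1 \cdots p_s$ as a product of (not necessarily distinct) primes, each of which divides $q-1$ since $\rad(n)\mid \ord(a)\mid q-1$. The base case $n=1$ is trivial: $X^t-a$ is irreducible by hypothesis and $d=\gcd(1,\frac{q-1}{\ord(a)})=1$.

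**Induction step.** Suppose the claimed factorization holds for $n' = p_1\cdots p_{s-1}$ with parameter $d' = \gcd(n',\frac{q-1}{\ord(a)})$, giving $X^{tn'}-a = \prod_{j=0}^{d'-1}(X^{t\cdot n'/d'} - \cyclgen{d'}^{j}b')$ where $b'^{d'}=a$ and each coefficient $c_j := \cyclgen{d'}^{j}b'$ has order $\ord(a)\cdot d'$. Now apply the single-prime step with $p=p_s$ to each irreducible factor $X^{t\cdot n'/d'}-c_j$. Here the key dichotomy from Proposition \ref{theorem: Factorization of X^(tp)-a reducible case} is whether $p_s\cdot\ord(c_j)$ divides $q-1$. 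Since $\ord(c_j)=\ord(a)d'$, this amounts to asking whether $p_s$ divides $\frac{q-1}{\ord(a)d'}$, i.e.\ whether $p_s\mid \frac{q-1}{\ord(a)}\big/\gcd(p_1\cdots p_{s-1},\frac{q-1}{\ord(a)})$. Exactly in that case $X^{t\cdot n'/d'\cdot p_s}-c_j$ splits into $p_s$ irreducible pieces $X^{t\cdot n'/d'}-\cyclgen{p_s}^i b_j$ (with $b_j^{p_s}=c_j$); otherwise it stays irreducible. The first case contributes a factor $p_s$ to the new $d$; the second does not. So $d = \gcd(n,\frac{q-1}{\ord(a)}) = d' \cdot \gcd(p_s, \frac{q-1}{\ord(a)d'})$, which is the correct recursion for the gcd. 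The new exponent $t\cdot n'/d'$ (if $p_s$ divides none of the $\ord(c_j)$-quotients) or $t\cdot n'/(d'p_s) \cdot p_s = t\cdot n/d$... — one must track that the exponent of each surviving factor is precisely $t\cdot n/d$, and the $d$ coefficients obtained are exactly $\{\cyclgen{d}^{j} b : 0\le j\le d-1\}$ for a suitable $b$ with $b^d = a$.

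**The bookkeeping obstacle.** The main difficulty is not any single algebraic fact — each is handed to us by the earlier propositions — but showing that the $d' \cdot \gcd(p_s,\frac{q-1}{\ord(a)d'})$ coefficients produced at stage $s$ reassemble cleanly into $\{\cyclgen{d}^{j}b\}_{j=0}^{d-1}$ with a single $d$-th root $b$ of $a$, rather than a scattered collection of roots of various $c_j$'s. The clean way to handle this is to argue directly in $\overline{\F}_q$: the $tn$ roots of $X^{tn}-a$ are $\{\zeta_{tn}^{k} \beta : 0\le k < tn\}$ for a fixed $\beta$ with $\beta^{tn}=a$; partition them into $\Fq$-conjugacy classes; show (using $\rad(n)\mid\ord(a)$, Proposition \ref{proposition: ord(b) s.t. b^p=a}, and Serret's theorem \ref{Serret1866: Theorem Irreducibility of X^n-a}) that each class has size $tn/d$ and that the classes are indexed by the $d$-th roots of unity times a fixed $b$; then invoke Serret's criterion to confirm each $X^{t n/d} - \cyclgen{d}^{j}b$ is irreducible (its radical still divides $\ord(a)$ since $\rad(t)\mid\ord(a)$ and every prime of $n/d$ divides $\rad(n)\mid\ord(a)$; coprimality with $\frac{q-1}{\ord(\cyclgen d^j b)}$ holds because $\ord(\cyclgen d^j b)=\ord(a)d$ absorbs exactly the gcd; and the $4\nmid$ hypothesis is inherited). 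I would likely present the induction as the main line and use this $\overline{\F}_q$ picture to verify the reassembly, since it is the step most prone to hidden case distinctions.

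**Irreducibility of the factors.** The last loose end is confirming via Theorem \ref{Serret1866: Theorem Irreducibility of X^n-a} that $X^{t n/d}-\cyclgen d^j b$ is irreducible for each $j$: condition (i) holds because $\rad(tn/d)\mid\rad(tn)\mid\ord(a)\mid\ord(\cyclgen d^j b)$; condition (ii) because $\frac{q-1}{\ord(\cyclgen d^j b)} = \frac{q-1}{\ord(a)d}$ is coprime to $n/d$ by the very definition of $d$ as the largest divisor of $n$ dividing $\frac{q-1}{\ord(a)}$, and coprime to $t$ since $X^t-a$ irreducible forces $\gcd(t,\frac{q-1}{\ord(a)})=1$; condition (iii) is the inherited $4\nmid tn$ or $q\equiv 1\pmod 4$ assumption. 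Finally a degree count — $d\cdot t n/d = tn$ — confirms no factors are missing.
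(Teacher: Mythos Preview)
Your approach is correct and matches the paper's: induction on the number of prime factors of $n$, with each single-prime step governed by Propositions~\ref{theorem: irreducibility of X^(tp)-a} and~\ref{theorem: Factorization of X^(tp)-a reducible case} according to whether $p_s\cdot\ord(a)\cdot d'$ divides $q-1$. The paper resolves your ``bookkeeping obstacle'' more simply than the $\overline{\F}_q$ conjugacy-class picture you sketch: in the splitting case it sets $c:=c_0$ (a $p_s$-th root of $b'$, so $c^{d'p_s}=a$) and observes that both $\{\cyclgen{p_s}^{i}c_j\}_{i,j}$ and $\{\cyclgen{d'p_s}^{k}c\}_{k}$ are complete sets of $(d'p_s)$-th roots of $a$ in $\Fq$, hence coincide.

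One remark worth making: your final paragraph (the Serret check on $X^{tn/d}-\cyclgen d^{j}b$ together with the degree count $d\cdot tn/d=tn$) is in fact a self-contained direct proof that renders the induction unnecessary. Once you know a $b\in\Fq$ with $b^d=a$ and $\ord(b)=\ord(a)\cdot d$ exists (which follows from $\rad(d)\mid\ord(a)$ and $\ord(a)\cdot d\mid q-1$ via repeated application of Proposition~\ref{proposition: ord(b) s.t. b^p=a}(ii)), the formal identity $\prod_{j=0}^{d-1}(X^{tn/d}-\cyclgen d^{j}b)=X^{tn}-b^d=X^{tn}-a$ plus your three-condition verification of Theorem~\ref{Serret1866: Theorem Irreducibility of X^n-a} finishes the argument outright. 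The paper chooses the inductive route because it sets up the machinery reused in the companion Proposition~\ref{theorem: factorization X^(tn)-a with gcd(n_ord(a))=1}, where no such clean direct argument is available.
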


\begin{proof}
	We use the fact that every positive integer \(n\) can be written as a product of prime factors and prove the statement by induction  on the number of prime factors of \(n\). \underline{Base case:} If \(n=p\) for a prime \(p\) such that \(p \mid q-1\) and \(p \mid \ord(a)\), the statement follows directly from \Cref{theorem: Factorization of X^(tp)-a reducible case} (i).
	
	\underline{Induction hypothesis:} Suppose that the statement of \Cref{theorem: factorization X^(tn)-a with gcd(n_ord(a))=1} holds for \(n, d, b \)  such that \(\rad(n)\mid \ord(a), d= \gcd(n,\frac {q-1} {\ord(a)})\) and \(b^d = a\).
	
	\underline{Induction step \((n\rightarrow np)\):} Let \(p\) be a prime such that \(p\mid \ord(a)\), then
	\begin{equation}\label{eq: factorization X^(tnp)-a for rad(n) mid ord(a)}
		X^{tnp}-a = \prod_{j=0}^{d-1} (X^{t\cdot \frac nd\cdot p}-\cyclgen d^j b).
	\end{equation}
	If  \(dp\) does not divide \(\frac {q-1}{\ord(a)}\), then with \Cref{theorem: irreducibility of X^(tp)-a} the polynomial \((X^{t\cdot \frac nd \cdot p }-\cyclgen d^j b)\) is irreducible over \(\Fq\) for every \(0 \leq j\leq d-1\). Furthermore, the greatest common divisor of \(np\) and \(\frac {q-1}{\ord(a)}\) is also \(d\).  Consequently, \cref{eq: factorization X^(tnp)-a for rad(n) mid ord(a)} is the factorization of \(X^{tnp}-a \) into monic irreducible factors over \(\Fq\) and the statement of \Cref{theorem: factorization X^(tn)-a with rad(n) mid ord(a)} holds.
	
	If \(dp\) divides \(\frac {q-1}{\ord(a)}\), then \(\tilde d := \gcd(np, \frac {q-1}{\ord(a)}) = dp\) and there exists \(\cyclgen{\tilde d}\in \Fq\). Furthermore, for every \(0 \leq j \leq d-1\) holds \(\ord(\cyclgen d^j b) \cdot p = \ord(a)\cdot d \cdot p\mid q-1\) and the polynomial \((X^{t \cdot \frac nd}-\cyclgen d^j b)\) is reducible. \Cref{theorem: Factorization of X^(tp)-a reducible case} yields that there exists an element \(c_j\in \Fq\) such that \(c_j^p = \cyclgen d^j b \) and that the factorization of \((X^{t \cdot \frac nd \cdot p}-\cyclgen d^j b)\) into monic irreducible factors over \(\Fq\) is given by \(\prod_{i=0}^{p-1} (X^{t \cdot \frac nd}- \cyclgen p^i c_j)\), where \(\ord(\cyclgen p^i c_j) = \ord(\cyclgen d^j b)\cdot p = \ord(a)\cdot d \cdot p = \ord(a)\cdot \tilde d\). 
	Thus, the factorization of \(X^{tnp}-a\) into monic irreducible factors over \(\Fq\) is
	\begin{equation}\label{eq: factorization X^{tnp}-a rad(n) mid ord(a) (1)}
		X^{tnp}-a = \prod_{j=0}^{d-1} \prod_{i=0}^{p-1}(X^{t\cdot \frac nd}-\cyclgen{p}^i c_j) = \prod_{j=0}^{d-1} \prod_{i=0}^{p-1} (X^{t\cdot \frac {np} {\tilde d}} - \cyclgen p^i c_j).
	\end{equation} Moreover, \(c:=c_0\) satisfies \(c^{dp} = (c^p)^d = b^d =a\) and the factorization of  \(X^{tnp}-a\) into monic irreducible factors over \(\Fq\) can also  be written as:
	\begin{equation}\label{eq: factorization X^{tnp}-a rad(n) mid ord(a) (2)}
		X^{tnp}-a = \prod_{j=0}^{\tilde d} (X^{t\cdot \frac {np} {\tilde d}}-\cyclgen{\tilde d}^j c).
	\end{equation}
	Then for every \(0 \leq j \leq \tilde d -1\) holds \(\ord(\cyclgen {\tilde d}^jc) = \ord(a)\cdot \tilde d\), because every factor in \cref{eq: factorization X^{tnp}-a rad(n) mid ord(a) (2)} corresponds to a factor in \cref{eq: factorization X^{tnp}-a rad(n) mid ord(a) (1)}.
\end{proof}

As in the proof of \Cref{theorem: factorization X^(tn)-a with rad(n) mid ord(a)} we obtain the factorization of \(X^{tn}-a\) for the case \(\gcd(n, \ord(a))=1\) by induction on the number of prime factors of \(n\).  Additionally, we impose the condition \(\gcd(n,t)=1\) on \(n\) because otherwise the polynomials would not be reducible in every induction step (see \Cref{theorem: irreducibility of X^(tp)-a with p mid t}).

\begin{proposition}
	\label{theorem: factorization X^(tn)-a with gcd(n_ord(a))=1}
	Let \(a\in \Fq^\ast\), \(t\in \N\) such that \(X^t-a\) is irreducible over \(\Fq\) and let \(n\in \N\) such that \(\rad(n)\mid (q-1)\) and \(\gcd(n,\ord(a)\cdot t)=1\). Further,  let  \(4\nmid tn\) or \(q \equiv 1 \pmod 4\). Set \(d:= \gcd(n, \frac {q-1}{\ord(a)}) = \gcd(n, q-1)\). Then the factorization of \(X^{tn}-a\) into monic irreducible factors over \(\Fq\) is
	\begin{equation}\label{eq: factorization of X^(tn)-a with gcd(n_ord(a))=1}
		\prod_{v \mid \frac n{d} } \prod_{\substack{j=0\\\gcd(j,v)=1}}^{d-1} (X^{tv}-\cyclgen {d}^{j}a^{rv}),
	\end{equation}
	where \(r\in \N\) satisfies \changed{\( r=1\) if \(a=1\) or  \(rn \equiv 1\pmod {\ord(a)}\) otherwise}  and for all applicable \(v\) and \(j\) holds 
	\begin{enumerate}[(i)]
		\item \((\cyclgen {d}^j a^{rv})^{\frac nv} = a\),
		\item \(\ord( \cyclgen {d}^j a^{rv}) = \ord(a)\cdot \frac {d} {\gcd(j,d)}\).
	\end{enumerate}
\end{proposition}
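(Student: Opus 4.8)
The plan is to mimic the inductive argument used for \Cref{theorem: factorization X^(tn)-a with rad(n) mid ord(a)}, proceeding by induction on the number of prime factors of $n$, but now in the coprime regime $\gcd(n,\ord(a)\cdot t)=1$ where the combinatorics of the exponents $v\mid \frac nd$ and the coprimality condition $\gcd(j,v)=1$ enter. First I would dispose of the base case $n=p$, a prime dividing $q-1$ with $p\nmid \ord(a)$: here $d=\gcd(p,q-1)=p$ (since $p\mid q-1$), $\frac nd = 1$, so the only admissible $v$ is $v=1$ and the admissible $j$ range over all $0\le j\le p-1$ (as $\gcd(j,1)=1$ always). The claimed factorization then reads $\prod_{j=0}^{p-1}(X^{t}-\cyclgen p^j a^r)$ with $rp\equiv 1\pmod{\ord(a)}$, which is exactly \Cref{theorem: Factorization of X^(tp)-a reducible case}(\ref{item: X^{tp}-a order if p nmid ord(a)}) applied to the irreducible $X^t-a$ — including the order statements $(\cyclgen p^j a^r)^p=a$ (from $\cyclgen p^p=1$ and $(a^r)^p=a$) and $\ord(\cyclgen p^j a^r)=\ord(a)$ for $j=0$, $=p\cdot\ord(a)$ for $j\ge1$, matching $\ord(a)\cdot\frac{p}{\gcd(j,p)}$.

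For the induction step $n\to np$ with $p$ a prime, $p\mid q-1$, $p\nmid\ord(a)\cdot t$, I would start from the inductive factorization of $X^{tn}-a$ and raise $X$ to the $p$-th power, obtaining
\[
X^{tnp}-a \;=\; \prod_{v\mid \frac nd}\;\prod_{\substack{j=0\\\gcd(j,v)=1}}^{d-1}\bigl(X^{tvp}-\cyclgen d^j a^{rv}\bigr).
\]
Now each inner factor is of the form $X^{(tv)p}-c$ with $c=\cyclgen d^j a^{rv}$, $X^{tv}-c$ irreducible by the inductive hypothesis, and one has to decide reducibility via \Cref{theorem: irreducibility of X^(tp)-a} and \Cref{theorem: existence of b s.t. b^p=a in Fq}: the factor splits iff $p\cdot\ord(c)\mid q-1$. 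Using the inductive order formula $\ord(c)=\ord(a)\cdot\frac{d}{\gcd(j,d)}$, this condition becomes $p\cdot\ord(a)\cdot\frac{d}{\gcd(j,d)}\mid q-1$, i.e. $\frac{pd}{\gcd(j,d)}\mid \frac{q-1}{\ord(a)}$. I would split into the two cases $dp\mid \frac{q-1}{\ord(a)}$ and $dp\nmid \frac{q-1}{\ord(a)}$ — equivalently $\gcd(np,q-1)$ equals $dp$ or stays $d$ — exactly as in the previous proof. In the case $dp\nmid\frac{q-1}{\ord(a)}$ one checks $\gcd(j,d)=d$ is forced for the factor to split further, and carefully bookkeeps which $v$ (now $v$ or $vp$) and which residues $j$ survive; the key point is that $\gcd(j,v)=1$ together with $p\nmid v$ controls exactly when $X^{tvp}-\cyclgen d^j a^{rv}$ is irreducible versus when it must be split off into $p$ pieces of the form $X^{tv}-\cyclgen{dp}^{j'}a^{r'v}$. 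In the case $dp\mid \frac{q-1}{\ord(a)}$ one has $\tilde d=dp$, every inner factor splits by \Cref{theorem: Factorization of X^(tp)-a reducible case}, and one re-indexes the double product $\prod_{j}\prod_{i=0}^{p-1}$ over $j\in\{0,\dots,d-1\}$, $i\in\{0,\dots,p-1\}$ into a single product over $j'\in\{0,\dots,\tilde d-1\}$ with the updated exponent set $v\mid \frac{np}{\tilde d}$ and coprimality $\gcd(j',v)=1$, choosing a compatible $p$-th root $a^{r'}$ with $r'p\equiv r\pmod{\ord(a)}$ (or $r'=1$ if $a=1$) so that $(a^{r'})^{np}=a$ and tracking the orders $\ord(\cyclgen{\tilde d}^{j'}a^{r'v})=\ord(a)\cdot\frac{\tilde d}{\gcd(j',\tilde d)}$ by matching each factor in the re-indexed product to a factor in the pre-indexed one.

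The irreducibility of each surviving factor $X^{tv}-\cyclgen d^j a^{rv}$ (resp. $X^{tvp}-\ldots$) is verified exactly as in \Cref{theorem: Factorization of X^(tp)-a reducible case}: from irreducibility of $X^t-a$ one gets $\rad(t)\mid\ord(a)$ and $\gcd(t,\frac{q-1}{\ord(a)})=1$ via \Cref{Serret1866: Theorem Irreducibility of X^n-a}; since $\gcd(v,\ord(a)\cdot t)=1$ one needs $\rad(tv)\mid\ord(\cyclgen d^j a^{rv})$, which holds because $\rad(v)$ is a product of primes dividing $d\mid q-1$ and hence dividing $\ord(\cyclgen d^j a^{rv})$ when $\gcd(j,v)=1$ — this is precisely what the coprimality condition $\gcd(j,v)=1$ buys us, and it is the subtle bookkeeping point — together with $\gcd(tv,\frac{q-1}{\ord(\cdot)})=1$ and the hypothesis $4\nmid tn$ or $q\equiv1\pmod4$. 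I expect the main obstacle to be precisely this index juggling: showing that after raising to the $p$-th power, the factors that remain irreducible correspond bijectively to the pairs $(v,j)$ with $p\nmid v$ (roughly) while those that split contribute exactly the pairs $(vp,j')$, and that the coprimality constraint $\gcd(j,v)=1$ is preserved and not over- or under-counted — in other words, verifying that the two ways of writing the factorization (the "expanded" double product and the "collapsed" single product over the new $d$) genuinely agree, including the order assertions (i) and (ii). Everything else is a direct application of the propositions already established.
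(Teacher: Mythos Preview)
Your proposal follows essentially the same route as the paper's proof: induction on the number of prime factors of $n$, with the induction step $n\to np$ split into the two cases $\gcd(np,q-1)=d$ and $\gcd(np,q-1)=dp$, each handled via \Cref{theorem: irreducibility of X^(tp)-a}, \Cref{theorem: irreducibility of X^(tp)-a with p mid t}, and \Cref{theorem: Factorization of X^(tp)-a reducible case}. The base case and the second case are treated correctly.

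There is one concrete slip in your treatment of the case $\gcd(np,q-1)=d$. You write that ``$\gcd(j,d)=d$ is forced for the factor to split further''; this would mean $j=0$ only. The correct condition (for $p\nmid v$) is $p\mid j$: since in this case $p\mid d$ and $\nu_p(d)=\nu_p(q-1)$, the divisibility $p\cdot\ord(a)\cdot\frac{d}{\gcd(j,d)}\mid q-1$ holds precisely when $p\mid\gcd(j,d)$, i.e.\ $p\mid j$. This is what produces the three sub-families in the paper's equations (\ref{eq: gcd(np_q-1)=d p^2 mid v}), (\ref{eq: gcd(np_q-1)=d p mid v}), (\ref{eq: gcd(np_q-1)=d p nmid v}). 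Relatedly, you do not isolate the subcase $p\mid v$, where \Cref{theorem: irreducibility of X^(tp)-a with p mid t} gives irreducibility of $X^{tvp}-c$ outright (independently of $j$); this subcase feeds the divisors $\tilde v\mid\frac{np}{d}$ with $p^2\mid\tilde v$. Once you fix the splitting criterion to $p\mid j$ and separate $p\mid v$ from $p\nmid v$, the re-indexing you anticipate goes through exactly as in the paper.

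Your final paragraph proposing a direct irreducibility check of $X^{tv}-\cyclgen d^j a^{rv}$ via \Cref{Serret1866: Theorem Irreducibility of X^n-a} is valid (your observation that $\gcd(j,v)=1$ forces each prime of $v$ to divide $\frac d{\gcd(j,d)}$ is the right one), but it is redundant: in the inductive argument the irreducibility of every surviving factor is already guaranteed by \Cref{theorem: irreducibility of X^(tp)-a}, \Cref{theorem: irreducibility of X^(tp)-a with p mid t}, and \Cref{theorem: Factorization of X^(tp)-a reducible case}, so no fresh appeal to Serret is needed.
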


\begin{proof}
	\underline{Base case:} If \(n=p\), then the statement follows directly from \Cref{theorem: Factorization of X^(tp)-a reducible case} (ii).
	
	\underline{Induction hypothesis:} Suppose that the statement of \Cref{theorem: factorization X^(tn)-a with gcd(n_ord(a))=1} holds for \(n, d, r\) such that \(\rad(n) \mid (q-1), \gcd(n,\ord(a)\cdot t)=1, d = \gcd(n, q-1)\) and either \(a=1\) and \(r=1\) or  \(rn\equiv 1 \pmod {\ord(a)}\). 
	
	\underline{Induction step (\(n\rightarrow np\)):} Let \(p\) be a prime such that \(p \mid (q-1)\) and \(p \nmid \ord(a)\cdot t\). \changed{If \(a=1\) then set \(\tilde r=1\) so that \((a^{\tilde r})^p = a\).} Otherwise there exists a positive integer \( r'\) such that \( r' p \equiv 1 \pmod {\ord(a)}\). Since \(\ord(a^r)=\ord(a)\), this implies that \(a^{r r' p}=(a^r)^{ r'p} = a^r\). We set \(\tilde r:= r r'\) and together with \cref{eq: factorization of X^(tn)-a with gcd(n_ord(a))=1} we obtain:
	\begin{equation}\label{eq: proof  X^(tnp)-a for gcd(n_ord(a))=1}
		X^{tnp}-a = \prod_{v\mid \frac n {d}} \prod_{\substack{j=0\\\gcd(j,v)=1}}^{d-1} (X^{tvp}- \cyclgen {d}^j a^{\tilde rpv}),
	\end{equation}
	where \(\tilde r\) satisfies \changed{\(r=1\) if \(a=1\) or  \(\tilde r\cdot np = rn \cdot r'p \equiv 1 \pmod {\ord(a)}\) otherwise}. Furthermore, we have \(\ord(a^{\tilde r pv}) = \ord(a)\), because \((a^{\tilde r pv})^{\frac nv} = a\). It remains to discuss which of the factors \((X^{tvp}-\cyclgen{d}^j a^{\tilde rvp})\) are reducible. For this we need to discern the two cases \(\gcd(np,q-1)= d\) and \(\gcd(np, q-1)= d\cdot p\).
	
	\underline{Case \(\gcd(np,q-1)=d\):} If \(\gcd(np,q-1)=d\), then \(p \mid n\) but \(dp\nmid q-1\) and also \(\ord(a)\cdot d\cdot p \nmid q-1\). Let \(v\mid \frac n {d}\) and \(0 \leq j \leq d-1\) such that \(\gcd(j,v)=1\). Then if \(p \mid v\), the polynomial \((X^{tvp}-\cyclgen {d}^ja^{\tilde rvp})\) is irreducible over \(\Fq\) with \Cref{theorem: irreducibility of X^(tp)-a with p mid t}. Thus, the following expression is a factorization into irreducible factors over \(\Fq\), where we set \(\tilde v:=vp\):
	\begin{equation}\label{eq: gcd(np_q-1)=d p^2 mid v}
		\prod_{\substack{v\mid \frac n {d}\\ p \mid v}} \prod_{\substack{j=0\\ \gcd(j,v)=1}}^{d-1} (X^{tvp}-\cyclgen {d}^j a^{\tilde rvp}) = \prod_{\substack{\tilde v\mid \frac{np}{d}\\ p^2\mid \tilde v}} \prod_{\substack{j=0\\\gcd(j,\tilde v)=1}}^{d-1} (X^{t\tilde v}-\cyclgen{d}^j a^{\tilde r\tilde v}),
	\end{equation}
	where \(\gcd(j,v)=1\) if and only if \(\gcd(j,\tilde v)=\gcd(j,vp)=1\), because \(p \mid v\). If \(p \nmid v\), then with \Cref{theorem: irreducibility of X^(tp)-a} the polynomial \((X^{tvp}-\cyclgen {d}^ja^{\tilde rvp})\) is irreducible if and only if \(\ord(\cyclgen {d}^j a^{\tilde rvp})  \cdot p = \ord(a) \cdot \frac {d}{\gcd(j,d)} \cdot p\) does not divide \((q-1)\). This is the case if and only if \(p \nmid j\), because \(\ord(a)\cdot d\) divides \(( q-1)\) but  \(\ord(a)\cdot d\cdot p\) does not. Consequently, the following expression is a factorization into irreducible factors over \(\Fq\), where we set \(\tilde v:= vp\):
	\begin{equation}\label{eq: gcd(np_q-1)=d p mid v}
		\prod_{\substack{v\mid \frac n {d}\\ p \nmid v}} \prod_{\substack{j=0\\\gcd(j,v)=1\\p \nmid j}}^{d-1} (X^{tvp}-\cyclgen {d}^j a^{\tilde rvp}) = \prod_{\substack{\tilde v\mid \frac {np}{d}\\ \nu_p(\tilde v)=1}} \prod_{\substack{j=0\\ \gcd(j,\tilde v)=1}}^{d-1} (X^{t\tilde v}-\cyclgen {d}^j a^{\tilde r\tilde v}).
	\end{equation}
	Let \(0 \leq j \leq d-1\) such that \(p \mid j\). Then the polynomial \((X^{tvp}-\cyclgen{d}^j a^{\tilde rvp})\) is reducible and \(\cyclgen {d}^{\frac jp} a^{\tilde r v} \) is an element in \(\Fq\) satisfying \((\cyclgen {d}^{\frac jp} a^{\tilde r v})^p = \cyclgen {d}^j a^{\tilde r vp}\).  Let \(\cyclgen p = \cyclgen {d}^{\frac {d}p}\), then from \Cref{theorem: Factorization of X^(tp)-a reducible case} follows that its factorization into irreducible factors over \(\Fq\) is
	\begin{equation*}
		\prod_{i=0}^{p-1} (X^{tv}-\cyclgen p^i \cyclgen {d}^{\frac jp} a^{\tilde r v}) = \prod_{i=0}^{p-1} (X^{tv}-\cyclgen{d}^{\frac {d}p \cdot i + \frac jp } a^{\tilde r v}).
	\end{equation*}
	With the fact that \(\{0 \leq j \leq d-1: p \mid j\}= \{p\tilde j: 0 \leq \tilde j \leq \frac {d} p -1\}\), the following expression is a factorization into irreducible factors over \(\Fq\):
	\begin{equation}\label{eq: gcd(np_q-1)=d p nmid v}
		\prod_{\substack{v\mid \frac n {d}\\ p \nmid v}} \prod_{\tilde j =0}^{\frac {d}p -1} \prod_{i=0}^{p-1} (X^{tv}-\cyclgen{d}^{\frac {d}p \cdot i+\tilde j} a^{\tilde r v}) = \prod_{\substack{v\mid \frac {np} {d}\\ p \nmid v}} \prod_{j=0}^{d-1} (X^{tv}-\cyclgen{d}^j a^{\tilde rv}),
	\end{equation}
	where \(\ord(a^{\tilde r v})=\ord(a)\), because \((a^{\tilde rv})^{p \frac nv}=a\). Equations (\ref{eq: gcd(np_q-1)=d p^2 mid v}), (\ref{eq: gcd(np_q-1)=d p mid v}) and (\ref{eq: gcd(np_q-1)=d p nmid v}) combined yield that if \(\gcd(np,q-1)=d\) the factorization of \(X^{tnp}-a\) into irreducible factors over \(\Fq\) is given by:
	\begin{equation*}
		\prod_{v\mid \frac {np}{d}} \prod_{\substack{j=0\\\gcd(j,v)=1}}^{d-1} (X^{tv}-\cyclgen{d}^j a^{\tilde r v}),
	\end{equation*}
	where for all applicable \(v\) and \(j\) holds:
	\begin{enumerate}[(i)]
		\item \(( \cyclgen{d}^j a^{\tilde r v})^{\frac {np} v} = \cyclgen{d}^{j \frac {np}v} a^{\tilde r np } = a\), because \(v\mid \frac {np} {d}\) implies \(d\mid \frac {np} v\),
		\item \(\ord(\cyclgen{d}^j a^{\tilde r v}) = \frac {d}{\gcd(j,d)} \cdot \ord(a) \), because \(\gcd(d, \ord(a))=1\).
	\end{enumerate}
	
	\underline{Case \(\gcd(np,q-1)=d\cdot p\):} If \(\gcd(np, q-1)= dp\), then \(d\cdot p \mid (q-1)\) and there exists \(\cyclgen{dp}\in \Fq\) such that \(\cyclgen{dp}^{p}=\cyclgen{d}\). Additionally, we set \(\cyclgen p := \cyclgen{dp}^{d}\).  We consider the polynomial \((X^{tvp}-\cyclgen{d}^j a^{\tilde r vp})\) from \cref{eq: proof  X^(tnp)-a for gcd(n_ord(a))=1}, where \(v \mid \frac {n} {d}\) and \(0 \leq j \leq d-1\) such that \(\gcd(j,v)=1\). Since \(\gcd(d\cdot p, \ord(a))=1\) and \(dp\mid (q-1)\), the product \(\ord(a)\cdot d\cdot p\) divides \((q-1)\). Consequently, also  \(\ord(\cyclgen{d}^j a^{\tilde rvp}) \cdot p\) divides  \((q-1)\) and with \Cref{theorem: Factorization of X^(tp)-a reducible case}  the factorization of \((X^{tvp}- \cyclgen{d}^j a^{\tilde r vp})\) into monic irreducible factors over \(\Fq\) is
	\begin{align*}
		\prod_{i=0}^{p-1} (X^{tv}-\cyclgen{p}^i \cyclgen{dp}^j a^{\tilde rv}) = \prod_{i=0}^{p-1} (X^{tv}-\cyclgen{dp}^{d\cdot i + j} a^{\tilde r v}),
	\end{align*}
	because \(\cyclgen{dp}^j a^{\tilde rv}\) is an element of \(\Fq\) satisfiying \((\cyclgen{dp}^j a^{\tilde rv})^p = \cyclgen{d}^j a^{\tilde r vp}\). 
	
	Note that \(\{di+j: 0 \leq i \leq p-1, 0 \leq j \leq d-1\} = \{0 \leq j \leq dp-1\}\). Therefore, the factorization of \(X^{tnp}-a\) into monic irreducible factors over \(\Fq\) is 
	\begin{equation*}
		\prod_{v\mid \frac {n} {d}} \prod_{\substack{j=0\\ \gcd(j,v)=1}}^{d-1} \prod_{i=0}^{p-1} (X^{tv}-\cyclgen{dp}^{di+j} a^{\tilde r v}) = \prod_{v \mid \frac {np}{dp}} \prod_{\substack{j=0\\\gcd(j,v)=1}}^{dp-1} (X^{tv}- \cyclgen{dp}^j a^{\tilde r v}),
	\end{equation*}
	where \(\gcd(j,v)=1\) if and only if \(\gcd(di+j, v)=1\), because every prime \(w\) that divides \(v\) must divide  \(n\). Since \(\rad(n)\mid q-1\), \(w\) also divides \(d\). Furthermore, for all applicable \(v\) and \(j\), we have 
	\begin{enumerate}[(i)]
		\item \((\cyclgen{dp}^j a^{\tilde r v})^{\frac {np} v} = (\cyclgen{dp}^j a^{\tilde r v})^{p \cdot  \frac {n} v}\), because \(p\) does not divide \(v\). Indeed, suppose that \(p \mid v\), then with \Cref{theorem: irreducibility of X^(tp)-a with p mid t} the polynomial \((X^{tvp}-\cyclgen{d}^j a^{\tilde r vp})\) would not have been reducible. Thus, \((\cyclgen{dp}^j a^{\tilde r v})^{\frac {np} v} = (\cyclgen{d}^j a^{\tilde r vp})^{\frac nv} = a\) by induction hypothesis. 
		\item \(\ord(\cyclgen{dp}^j a^{\tilde rv}) = \frac {dp}{\gcd(j, dp)}\cdot \ord(a)\), because \(\ord(a^{\tilde rv})=\ord(a)\) and  \(\gcd(dp, \ord(a))=1\).
	\end{enumerate}
\end{proof}

The next theorem combines  \Cref{theorem: factorization X^(tn)-a with rad(n) mid ord(a)} and \Cref{theorem: factorization X^(tn)-a with gcd(n_ord(a))=1}. It gives the complete  factorization of \(X^n-a\) for the case \(\rad(n)\mid (q-1)\) and (\(4\nmid n\) or \(q\equiv 1 \pmod 4\)). \changed{In particular, it is an extension and a combination  of \cite[Theorem 6 (1), Theorem 8 (1), Theorem 9 (1)]{WuYue2018constacyclic}.}

\begin{theorem}\label{theorem: factorization of X^n-a for rad(n) mid q-1_4 nmid n or q = 1 mod 4}
	Let \(a\in \Fq^\ast\) and  \(n\in\N\) such that \(\rad(n)\mid q-1\) and \((4 \nmid n\) or \(q\equiv 1 \pmod 4)\). We write \(n=n_1 \cdot n_2,\) where \(\rad(n_1)\mid \ord(a)\) and \(\gcd(n_2, \ord(a))=1\). Further, we set \(d_1 := \gcd(n_1, \frac {q-1}{\ord(a)})\) and \(d_2:= \gcd(n_2, q-1)\).  Then there exists \(b\in \Fq\) such that \(b^{d_1} = a\) and the factorization of \(X^n-a\) into monic irreducible factors over \(\Fq\) is 
	\begin{equation*}
		\prod_{j=0}^{d_1-1} \prod_{v \mid \frac {n_2}{d_2}} \prod_{\substack{i=0\\\gcd(i,v)=1}} ^{d_2-1}   (X^{\frac {n_1}{d_1} \cdot v}- \cyclgen{d_2}^i (\cyclgen{d_1}^{j} b)^{rv}),
	\end{equation*}
	where \(r \in \N\) satisfies \changed{\(r=1\) if \(a=1\) or \(rn_2 \equiv 1 \pmod {\ord(a)\cdot d_1}\) otherwise } and for all applicable \(j,v,i\) holds:
	\begin{enumerate}[(i)]
		\item \((\cyclgen{d_2}^i (\cyclgen{d_1}^{j} b)^{rv}) ^{\frac {n_2} {v} \cdot d_1}=a\),
		\item \(\ord(\cyclgen{d_2}^i (\cyclgen{d_1}^{j} b)^{rv}) = \ord(a)\cdot d_1  \cdot \frac {d_2}{\gcd(i,d_2)}\).
	\end{enumerate}
\end{theorem}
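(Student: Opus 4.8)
The plan is to obtain the factorization of $X^n-a$ in two stages, one accounting for the factor $n_1$ and one for the factor $n_2$, by applying the two inductive propositions just proved. A preliminary observation underlies everything: every prime divisor of $n_1$ divides $\ord(a)$ while no prime divisor of $n_2$ does, so $\gcd(n_1,n_2)=1$, and hence $\gcd(n_2,d_1)=\gcd(n_2,n_1/d_1)=1$ because $d_1\mid n_1$.

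\textbf{Stage 1 (the factor $n_1$).} Since $X-a$ is irreducible over $\Fq$ and ($4\nmid n_1$ or $q\equiv 1\pmod 4$) because $n_1\mid n$, I would apply \Cref{theorem: factorization X^(tn)-a with rad(n) mid ord(a)} with $t=1$ and with $n_1$ in place of $n$. This yields an element $b\in\Fq$ with $b^{d_1}=a$ and a factorization $X^{n_1}-a=\prod_{j=0}^{d_1-1}\of{X^{n_1/d_1}-\cyclgen{d_1}^{j}b}$ into monic irreducible polynomials, where each $c_j:=\cyclgen{d_1}^{j}b$ has order $\ord(c_j)=\ord(a)\cdot d_1$. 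Substituting $X\mapsto X^{n_2}$ into this polynomial identity gives $X^{n}-a=\prod_{j=0}^{d_1-1}\of{X^{(n_1/d_1)n_2}-c_j}$.

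\textbf{Stage 2 (the factor $n_2$).} For each fixed $j$ I would factor $X^{(n_1/d_1)n_2}-c_j$ by \Cref{theorem: factorization X^(tn)-a with gcd(n_ord(a))=1} applied with $t=n_1/d_1$, with $n_2$ in place of $n$ and $c_j$ in place of $a$: the hypotheses hold since $X^{n_1/d_1}-c_j$ is irreducible, $\rad(n_2)\mid q-1$, $\gcd\of{n_2,\ord(c_j)\cdot(n_1/d_1)}=\gcd\of{n_2,\ord(a)\,n_1}=1$ by the preliminary observation, and ($4\nmid(n_1/d_1)n_2$ or $q\equiv 1\pmod 4$) because $(n_1/d_1)n_2\mid n$. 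As $\gcd(n_2,\ord(c_j))=1$, the relevant integer is $\gcd\of{n_2,\tfrac{q-1}{\ord(c_j)}}=\gcd(n_2,q-1)=d_2$ for every $j$, and one gets $X^{(n_1/d_1)n_2}-c_j=\prod_{v\mid n_2/d_2}\prod_{i=0,\ \gcd(i,v)=1}^{d_2-1}\of{X^{(n_1/d_1)v}-\cyclgen{d_2}^{i}c_j^{\,r_jv}}$ into monic irreducible factors, together with the data $\of{\cyclgen{d_2}^{i}c_j^{\,r_jv}}^{n_2/v}=c_j$ and $\ord\of{\cyclgen{d_2}^{i}c_j^{\,r_jv}}=\ord(a)\,d_1\cdot\tfrac{d_2}{\gcd(i,d_2)}$.

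\textbf{Assembly and main obstacle.} Concatenating the two stages gives the desired triple product once the exponent $r_j$ is made independent of $j$. Since $\ord(c_j)=\ord(a)\,d_1$ for all $j$ and $\gcd(n_2,\ord(a)\,d_1)=1$, any $r$ with $rn_2\equiv 1\pmod{\ord(a)\,d_1}$ (and $r=1$ when $a=1$, which forces $n_1=1$, $d_1=1$, $b=1$) satisfies $r\equiv r_j\pmod{\ord(c_j)}$, so $c_j^{\,r_jv}=c_j^{\,rv}=\of{\cyclgen{d_1}^{j}b}^{rv}$. Then (ii) is the order formula above rewritten with $c_j=\cyclgen{d_1}^{j}b$, and (i) follows by raising $\of{\cyclgen{d_2}^{i}(\cyclgen{d_1}^{j}b)^{rv}}^{n_2/v}=c_j$ to the power $d_1$ and using $c_j^{\,d_1}=\of{\cyclgen{d_1}^{d_1}}^{j}b^{d_1}=a$. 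I expect the only genuine difficulty to be this bookkeeping — that $d_1$, $d_2$ and $r$ may be chosen uniformly in $j$ — which rests on the coprimality facts $\gcd(n_1,n_2)=1$ and $\gcd(n_2,\ord(a)\,d_1)=1$ and on the observation that the $r_j$ produced by \Cref{theorem: factorization X^(tn)-a with gcd(n_ord(a))=1}, determined only modulo $\ord(c_j)$, may be replaced by a common inverse of $n_2$ modulo $\ord(a)\,d_1$; the degenerate case $a=1$ needs separate but immediate checking against the $r=1$ convention.
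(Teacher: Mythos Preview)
Your proposal is correct and follows essentially the same two-stage route as the paper: first apply \Cref{theorem: factorization X^(tn)-a with rad(n) mid ord(a)} with $t=1$ to factor $X^{n_1}-a$, then apply \Cref{theorem: factorization X^(tn)-a with gcd(n_ord(a))=1} with $t=n_1/d_1$ to each resulting binomial. Your treatment is in fact more thorough than the paper's on the bookkeeping (the coprimality checks, the verification that $\gcd(n_2,(q-1)/\ord(c_j))=\gcd(n_2,q-1)$, and the uniform choice of $r$ independent of $j$), which the paper leaves implicit.
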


\begin{proof}
	From \Cref{theorem: factorization X^(tn)-a with rad(n) mid ord(a)} follows that the factorization of \(X^{n_1}-a\)  is  \(\prod_{j=0}^{d_1-1} (X^{\frac {n_1}{d_1}}-\cyclgen{d_1}^{j} b) \), where \(b\in\Fq\) such that \(b^{d_1}=a\) and  \(\ord(\cyclgen{d_1}^{j} b)=\ord(a)\cdot d_1\) for all \(0 \leq j \leq d_1-1\). Since \(\gcd(n_2, \ord(a)\cdot d_1\cdot \frac {n_1}{d_1})=1\), application of \Cref{theorem: factorization X^(tn)-a with gcd(n_ord(a))=1} yields the factorization of \((X^{\frac {n_1}{d_1}}- \cyclgen{d_1}^{j}b)\):
	\begin{equation*}
		\prod_{v \mid \frac {n_2}{d_2}} \prod_{\substack{i=0\\\gcd(i,v)=1}}^{d_2-1}  (X^{\frac {n_1}{d_1} \cdot v}- \cyclgen{d_2}^i (\cyclgen{d_1}^j b)^{rv}),
	\end{equation*}
	where \((\cyclgen{d_2}^{i} (\cyclgen{d_1}^{j}b)^{rv})^{\frac {n_2}v} = (\cyclgen{d_1}^j b )\) and \(\ord(\cyclgen{d_2}^{i} (\cyclgen{d_1}^{j}b)^{rv}) = \ord(\cyclgen{d_1}^{j}b) \cdot \frac {d_2}{\gcd(i, d_2)}= \ord(a)\cdot d_1 \cdot \frac {d_2}{\gcd(i,d_2)}\). 
\end{proof}

\begin{remark}
	With the setup of \Cref{theorem: factorization of X^n-a for rad(n) mid q-1_4 nmid n or q = 1 mod 4}, there exists a \((d_1d_2)\)-th primitive root of unity in \(\Fq\), because \(\gcd(d_1,d_2)=1\). Then we can set \(\cyclgen{d_1} = \cyclgen{(d_1d_2)}^{d_2}\) and \(\cyclgen{d_2} = \cyclgen{(d_1d_2)}^{d_1}\) and rewrite 
	\(\cyclgen{d_2}^i (\cyclgen{d_1}^{j} b)^{rv}= \cyclgen{(d_1d_2)}^{d_1\cdot i + d_2 \cdot jrv} \cdot b^{rv}\). However, this notation seemed less readable than the expression in \Cref{theorem: factorization of X^n-a for rad(n) mid q-1_4 nmid n or q = 1 mod 4}.
\end{remark}

\subsection{The factorization of $X^n-a$ for positive integers $n$\\ such that $\gcd(n,q)=1$}

\label{subsection: factorization X^n-a gcd(n q)=1}

\Cref{theorem: factorization of X^n-a for rad(n) mid q-1_4 nmid n or q = 1 mod 4} combined with the following corollary, \Cref{theorem: corollary KK11 lemma 1}, allows us to determine the factorization of \(X^n-a\) for any positive integer \(n\) such that \(\gcd(n,q)=1\) in the following way: Let \(\gcd(n,q)=1\), then \(w=\ord_{\rad(n)}(q)\) is the smallest positive integer such that  \(\rad(n)\mid q^w-1\). If \(4\mid n\) and \(q^w\equiv 3 \pmod 4\), then the extension field \(\Fqto{2w}\) satisfies \(q^{2w} \equiv 1 \pmod 4\) and also \(\rad(n)\mid q^{2w}-1\). Thus, \Cref{theorem: factorization of X^n-a for rad(n) mid q-1_4 nmid n or q = 1 mod 4} yields the factorization of \(X^n-a\) into monic irreducible factors over \(\Fqto{s}\), where \(s=w\) or \(s={2w}\) respectively.  The factorization of \(X^n-a\) over \(\Fq\) can be derived from the factorization of \(X^n-a\) over \(\Fqto s\) with the following  corollary of  \Cref{KK11: Lemma 1}. 

\begin{corollary}\label{theorem: corollary KK11 lemma 1}
	Let \(g\) be a monic polynomial over \(\Fq\) and \(\prod_{R\in \mathcal R} R\) its factorization into monic irreducible factors over \(\Fqto{w}\) for \(w\geq 1\). Then the factorization of \(g\) into monic irreducible polynomials  over \(\Fq\) is given by:
	\[\prod_{R \in \mathcal R/\sim} \spin q {R},\]
	where \(R\sim \tilde R\) if and only if \(\tilde R = \frobenius{R}{j}\) for a positive integer \(j\). 
\end{corollary}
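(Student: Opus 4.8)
The plan is to reduce the statement to \Cref{KK11: Lemma 1} by analyzing how the Frobenius orbit structure on the irreducible factors of $g$ over $\Fqto w$ interacts with factorization over $\Fq$. Since $g\in\FqX$, the Frobenius automorphism $\sigma_q$ fixes $g$ coefficientwise, so $\sigma_q$ permutes the set $\mathcal R$ of monic irreducible factors of $g$ over $\Fqto w$; concretely, if $R$ is such a factor then $\frobenius Rj$ is again a monic irreducible factor of $g$ over $\Fqto w$ for every $j$. This makes $\sim$ an equivalence relation on $\mathcal R$, and the orbits are exactly the sets $\{\frobenius Rj : 0\le j\}$. First I would fix one representative $R$ per orbit and let $d=\coeffdeg q R$ be the degree of the coefficients of $R$ over $\Fq$; then $\frobenius Rj = R$ exactly when $d\mid j$, so the orbit of $R$ has size $d$ and consists of $R,\frobenius R1,\ldots,\frobenius R{d-1}$, all distinct. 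Note $d\mid w$ because the coefficients of $R$ lie in $\Fqto w$.

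The key step is to apply \Cref{KK11: Lemma 1} with the roles played by $\Fq$, $\Fqto d$, and the polynomial $R$: one must first check that $R$ actually has coefficients in $\Fqto d$ (not merely in $\Fqto w$), which is precisely the statement that $\coeffdeg q R = d$ means $\Fq(\text{coeffs of }R)=\Fqto d$. Granting this, $R$ is a monic irreducible polynomial over $\Fqto d$ with $\coeffdeg q R = d$, so \Cref{KK11: Lemma 1} (in the direction stated immediately after it in the excerpt) tells us that $\spin q R = \prod_{j=0}^{d-1}\frobenius Rj$ is monic irreducible over $\Fq$. Its degree is $d\cdot\deg R$, and it divides $g$ over $\Fqto w$ because each factor $\frobenius Rj$ does and they are pairwise coprime (distinct monic irreducibles). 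Since $\spin q R\in\FqX$ and divides $g$ in $\Fqto w[X]$, it divides $g$ in $\FqX$ as well (a gcd computed over $\Fq$ is unchanged by field extension). Thus each $\spin q R$, for $R$ ranging over orbit representatives, is a monic irreducible factor of $g$ over $\Fq$.

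Next I would argue these are distinct and exhaust $g$. Distinctness: if $\spin q R$ and $\spin q{R'}$ shared a root, that root's minimal polynomial over $\Fqto w$ would be a common factor of the two products, forcing $\frobenius R i = \frobenius{R'}{j}$ for some $i,j$, i.e. $R\sim R'$, so the representatives coincide. Exhaustiveness: every monic irreducible factor $S$ of $g$ over $\Fq$ splits over $\Fqto w$ into some subset of $\mathcal R$; picking any factor $R$ of $S$ over $\Fqto w$, the orbit $\{\frobenius Rj\}$ consists of factors of $S$ (again because $S\in\FqX$ is $\sigma_q$-stable), so $\spin q R\mid S$, and since both are monic irreducible over $\Fq$ we get $S=\spin q R$. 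Hence $g=\prod_{R\in\mathcal R/\sim}\spin q R$ as an identity of monic polynomials, after checking the multiplicities match — but $g$ appears here as a product of \emph{distinct} monic irreducibles by hypothesis (the factorization $\prod_{R\in\mathcal R}R$ is written without multiplicities), so each $\spin q R$ occurs exactly once. I expect the main obstacle to be the bookkeeping around the claim $\coeffdeg q R = d$ and the verification that $R\in\Fqto d[X]$: one has to be careful that the coefficient field of $R$ is genuinely $\Fqto d$ and that $d$ equals the orbit length, rather than conflate $d$ with $w$ or with $\deg R$; everything else is routine orbit-counting and the standard fact that irreducibility and divisibility over $\Fq$ can be tested after base change to $\Fqto w$.
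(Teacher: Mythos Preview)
The paper does not supply an explicit proof of this corollary; it is stated as an immediate consequence of \Cref{KK11: Lemma 1} and left at that. Your argument is correct and is precisely the natural unpacking of that implication: you use \Cref{KK11: Lemma 1} (in the direction spelled out just after it in the paper) to certify that each $\spin q R$ is monic irreducible over $\Fq$, and then do the routine orbit bookkeeping to see that these spins are pairwise distinct and account for all of $g$.

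Two minor remarks. First, the concern you flag at the end is a non-issue: by definition $\coeffdeg q R = d$ means the coefficients of $R$ generate $\Fqto d$ over $\Fq$, so in particular they lie in $\Fqto d$; and since $R$ is irreducible over the larger field $\Fqto w$, it is a fortiori irreducible over $\Fqto d$, so \Cref{KK11: Lemma 1} applies directly. Second, the corollary as stated does not assume $g$ is squarefree, so strictly speaking $\mathcal R$ should be read as a multiset; your argument treats the squarefree case, but the general case follows at once since $\sigma_q$ is a ring automorphism fixing $g$ and hence preserves multiplicities within each orbit. In the paper's applications (to $X^n-a$ with $\gcd(n,q)=1$) the polynomial is squarefree anyway.
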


The authors of \cite{WuYueFan2018, WuYue2021} use the method described above to obtain their results on the factorization of \(X^n-1\) such that \(\rad(n)\) is a divisor of \(q^w-1\) or \(q^{vw}-1\), where \(v,w\) are primes. In \cite{Brochero-MartinezReisSilva-Jesus2019} the authors use this method to factor \(f(X^n)\) such that \(\gcd(n, \ord(f)\cdot \deg(f))=1\) and \(\rad(n)\mid q^w-1\) for a prime \(w\). 

\changed{For the formulation and the proof of \Cref{theorem: factorization X^n-a for gcd(n_q)=1} we need the following four results. The following proposition  is a consequence of \Cref{proposition: ord(b) s.t. b^p=a} and  states that for every positive integer \(t\) such that \(X^t-a\) is irreducible, every root of \(X^t-a\) has order \(t\cdot \ord(a)\).
	
	\begin{nproposition}\label{theorem: ord(X^t-a)}
		Let \(a\in \Fq^\ast\) and  \(t\) be a positive integer such that \(X^t-a\) is irreducible over \(\Fq\). Then the order of \(X^t-a\) equals \(t \cdot \ord(a)\).
	\end{nproposition}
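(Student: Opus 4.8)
The order of the irreducible polynomial $X^t-a$ is by definition the multiplicative order of any of its roots, so I would fix a root $b\in\overline{\F}_q$ (which satisfies $b^t=a$) and show $\ord(b)=t\cdot\ord(a)$, using nothing beyond \Cref{Serret1866: Theorem Irreducibility of X^n-a} and elementary order arithmetic. The case $t=1$ is immediate since then $b=a$, so assume $t\geq 2$.

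Since $X^t-a$ is irreducible, \Cref{Serret1866: Theorem Irreducibility of X^n-a}(i) gives $\rad(t)\mid\ord(a)$. From $a=b^t$ we get
\[\ord(a)=\ord(b^t)=\frac{\ord(b)}{\gcd(\ord(b),t)},\]
so in particular $\ord(a)\mid\ord(b)$ and $\ord(b)=\ord(a)\cdot\gcd(\ord(b),t)$. It therefore suffices to prove $\gcd(\ord(b),t)=t$, i.e.\ $t\mid\ord(b)$.

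I would argue prime by prime. Let $p$ be a prime dividing $t$ and put $i:=\nu_p(t)\geq 1$. From $\rad(t)\mid\ord(a)$ we have $p\mid\ord(a)$, and since $\ord(a)\mid\ord(b)$ also $p\mid\ord(b)$. Comparing $p$-adic valuations in $\ord(b)=\ord(a)\cdot\gcd(\ord(b),t)$ gives $\nu_p(\ord(b))=\nu_p(\ord(a))+\min(\nu_p(\ord(b)),i)$; if $\nu_p(\ord(b))<i$ this would force $\nu_p(\ord(a))=0$, contradicting $p\mid\ord(a)$. Hence $\nu_p(\ord(b))\geq i=\nu_p(t)$ for every prime $p\mid t$, so $t\mid\ord(b)$ and consequently $\ord(b)=t\cdot\ord(a)$.

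An alternative, in the spirit of the inductive proofs of \Cref{subsection: factorization X^n-a special case rad(n) mid q-1}, is induction on the number of prime factors of $t$: the base case $t=p$ is exactly \Cref{proposition: ord(b) s.t. b^p=a}(ii) (note $p=\rad(p)\mid\ord(a)$ by Serret), and for the step $t\to tp$ one first observes that irreducibility of $X^{tp}-a$ forces irreducibility of $X^t-a$ via \Cref{Serret1866: Theorem Irreducibility of X^n-a}, so the induction hypothesis gives $\ord(b^p)=t\cdot\ord(a)$ for a root $b$ of $X^{tp}-a$, and then $p\mid\ord(a)\mid\ord(b)$ upgrades this to $\ord(b)=tp\cdot\ord(a)$. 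Either way there is no genuine obstacle; the only points requiring a little care are the valuation bookkeeping (equivalently, the descent of irreducibility) and the trivial cases $t=1$ and $a=1$.
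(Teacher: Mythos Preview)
Both arguments are correct. Your inductive alternative is essentially the paper's own proof: the paper also inducts on the number of prime factors of $t$, takes a root $\gamma$ of $X^{tp}-a$, sets $\beta=\gamma^p$, invokes \Cref{proposition: ord(b) s.t. b^p=a} to get $\ord(\gamma)\in\{\ord(\beta),\,p\cdot\ord(\beta)\}$, and rules out $\ord(\gamma)=\ord(\beta)$ by a field-degree contradiction (that equality would force $\gamma\in\Fqto t$, contradicting the irreducibility of $X^{tp}-a$). Your version of the step is in fact slightly tidier, since you make the descent of irreducibility from $X^{tp}-a$ to $X^t-a$ explicit via \Cref{Serret1866: Theorem Irreducibility of X^n-a}, which the paper leaves implicit. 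Your first, direct argument is a genuinely different and more economical route: it avoids induction altogether, uses Serret only to extract the single fact $\rad(t)\mid\ord(a)$, and finishes with a short $p$-adic valuation count on the identity $\ord(b)=\ord(a)\cdot\gcd(\ord(b),t)$. This buys independence from \Cref{proposition: ord(b) s.t. b^p=a} and from any field-degree reasoning; the paper's inductive proof, by contrast, stays within the toolkit of \Cref{subsection: factorization X^n-a special case rad(n) mid q-1} and reuses its building blocks.
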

	
	\begin{proof}
		We prove the statement by induction on the number of prime factors of \(t\). For \(t=1\) the statement is obviously true. Next, let  \(t\) be a positive integer such that \(X^t-a\) is irreducible and \(\ord(X^t-a) = t \cdot \ord(a)\).  Further, let \(p\) be a prime   such that the binomial \(X^{tp}-a\) is irreducible.  Let \(\gamma \in \Fqto {tp}\) be a root of \(X^{tp}-a\). Then   \(\beta := \gamma ^p\) is a root of \(X^t-a\) and satisfies \(\ord(\beta) = t \cdot \ord(a)\).  Thus, \(\gamma\) is a root of the polynomial \(X^p-\beta\in \Fqto{t}[X]\). Since \(X^{tp}-a\) is irreducible, we have \(\gcd(p,q)=1\) and with \Cref{proposition: ord(b) s.t. b^p=a} follows that \(\ord(\gamma)\in \{\ord(\beta), p \cdot \ord(\beta)\}\). Suppose that \(\ord(\gamma) = \ord(\beta)\). Then \(\gamma\) is a proper element of \(\Fqto t\) and the minimal polynomial of \(\gamma\) over  \(\Fq\) has degree \(t\). However, since \(X^{tp}-a\) is irreducible and \(\gamma \) is a root of \(X^{tp}-a\), this is the minimal polynomial of \(\gamma\) over \(\Fq\). A contradiction and our assumption must have been wrong. Consequently, \(\ord(\gamma) = p \cdot \ord(\beta)\).
\end{proof}

The following fact is well known. Its proof shows the mechanics behind the case \(\rad(n)\mid q-1\), \(4\mid n\) and \(q\equiv 3 \pmod 4\):

\begin{nfact}\label{fact: gcd(n q^2-1) for q=3 mod 4}
	Let \(q\equiv 3 \pmod 4\)  and \(n\) be a positive integer such that \(\rad(n)\mid q-1\) and  \(4\mid n\). 
	\begin{enumerate}[(i)]
		\item Then \(\gcd(n,q^2-1) = \gcd(n,q-1) \cdot  2^{1+\min\{\nu_2(\frac n4), \nu_2(\frac {q+1}2)\}}.\)\label{item: Fact q^2-1 q equiv 3 mod 4 d_2}
		\item Let \(e\) be a divisor of \(q-1\). Then \( {\gcd(n,\frac {q^2-1}{e})} = {\gcd(n,\frac {q-1}e)} \cdot  2^{1+\min\{\nu_2(\frac n4), \nu_2(\frac {q+1}2)\}}\) \label{item: Fact q^2-1 q equiv 3 mod 4 d_1}
	\end{enumerate}
\end{nfact}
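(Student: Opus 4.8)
The plan is to prove both identities prime by prime, comparing $p$-adic valuations of the two sides and using $\nu_p(\gcd(u,v))=\min(\nu_p(u),\nu_p(v))$ together with additivity of $\nu_p$ on products. Since $\rad(n)\mid q-1$, every prime dividing $n$ divides $q-1$, and for a prime \emph{not} dividing $n$ both sides of (i) and (ii) have $\nu_p=0$; so only the finitely many primes dividing $n$ must be checked. For an \emph{odd} prime $p\mid n$ we have $p\mid q-1$, hence $p\nmid q+1$ (otherwise $p\mid 2$), so $\nu_p(q^2-1)=\nu_p(q-1)+\nu_p(q+1)=\nu_p(q-1)$. In (i) the factor $2^{1+\min\{\ldots\}}$ is a power of $2$ and contributes nothing at $p$, so $\nu_p$ of both sides equals $\min(\nu_p(n),\nu_p(q-1))$. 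In (ii) I would write $\frac{q^2-1}{e}=\frac{q-1}{e}\cdot(q+1)$ and use $\nu_p(e)\le\nu_p(q-1)$ (because $e\mid q-1$) to get $\nu_p\bigl(\frac{q^2-1}{e}\bigr)=\nu_p(q-1)-\nu_p(e)=\nu_p\bigl(\frac{q-1}{e}\bigr)$, so again both sides have the same $\nu_p$.

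The only substantive case is $p=2$. Here $q\equiv 3\pmod 4$ forces $\nu_2(q-1)=1$ and $\nu_2(q+1)\ge 2$, whence $\nu_2(q^2-1)=1+\nu_2(q+1)$, while $4\mid n$ forces $\nu_2(n)\ge 2$, so $\nu_2(\frac n4)=\nu_2(n)-2\ge 0$ and $\nu_2(\frac{q+1}{2})=\nu_2(q+1)-1\ge 1$. For (i) one then has $\nu_2(\gcd(n,q-1))=\min(\nu_2(n),1)=1$ and $\nu_2(\gcd(n,q^2-1))=\min(\nu_2(n),\,1+\nu_2(q+1))$, so the claim is exactly the elementary identity
\[\min(x,1+y)=2+\min(x-2,\,y-1)\qquad\text{for integers }x\ge 2,\ y\ge 2,\]
which follows from $c+\min(a,b)=\min(c+a,c+b)$. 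For (ii) the same computation goes through after replacing $q-1$ and $q^2-1$ by $\frac{q-1}{e}$ and $\frac{q^2-1}{e}$ and carrying the shift by $\nu_2(e)$ (which is $0$ or $1$, since $e\mid q-1$) through both sides; with the displayed identity this again reduces (i) and (ii) to one and the same computation.

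I do not expect a genuine obstacle here: the whole argument is bookkeeping with $2$-adic valuations. The one point needing care is to see that the two standing hypotheses are spent precisely where the min-identity needs $x\ge 2$ and $y\ge 2$ — the conclusion is false if one drops either $4\mid n$ or $q\equiv 3\pmod 4$ — and, in part (ii), to keep the exponent shift by $\nu_2(e)$ consistent on the two sides of the equation.
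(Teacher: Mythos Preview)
Your argument for part~(i) is correct and is essentially the paper's own: both note that $q-1$ and $q+1$ share no odd prime factor (so odd primes contribute identically to the two sides) and then handle $p=2$ by the explicit valuation count.

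For part~(ii), however, your claim that ``carrying the shift by $\nu_2(e)$ through both sides'' reduces the problem to the same min--identity is wrong when $\nu_2(e)=1$. Writing $x=\nu_2(n)\ge 2$ and $y=\nu_2(q+1)\ge 2$, in that case one finds $\nu_2$ of the left side equals $\min(x,\,1+y-1)=\min(x,y)$, while $\nu_2$ of the right side equals $0+1+\min(x-2,y-1)=\min(x-1,y)$; these disagree precisely when $x\le y$. Concretely, take $q=3$, $n=4$, $e=2$: then $\gcd\!\bigl(n,\tfrac{q^2-1}{e}\bigr)=\gcd(4,4)=4$, whereas $\gcd\!\bigl(n,\tfrac{q-1}{e}\bigr)\cdot 2^{1+\min\{\nu_2(1),\nu_2(2)\}}=1\cdot 2^{1}=2$. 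So contrary to your closing remark there \emph{is} a genuine obstacle: the identity in~(ii) is false as stated. The paper's own proof makes the same slip in the case $2\mid e$ (it asserts the wrong $2$-adic valuation for the left side there). What the paper actually uses downstream is only the weaker consequence that $2$ divides the ratio $\gcd\!\bigl(n,\tfrac{q^2-1}{e}\bigr)\big/\gcd\!\bigl(n,\tfrac{q-1}{e}\bigr)$, and \emph{that} does hold: for $\nu_2(e)=1$ the $2$-adic valuation of this ratio is $\min(x,y)\ge 2$, and for $\nu_2(e)=0$ part~(ii) coincides with part~(i).
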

\begin{proof}
	\begin{enumerate}[(i)]
		\item Since \(q\equiv 3 \pmod 4\), we have \(2\mid q-1\) but \(4\nmid q-1\). Furthermore, \(q^2-1 = (q-1)\cdot (q+1)\) and \(2\mid (q+1)\), which implies that \(q^2-1\) is divisible by \(4\). Then \(4\) also divides \(\gcd(n,q^2-1)\), because \(4\mid n\).  Note that \(q^2-1\) and \(q-1\) do not have any other common prime factors apart from \(2\). Consequently, \(\gcd(n,q^2-1) = 2^{\min\{\frac n2, \nu_2(q+1)\}} \cdot \gcd(n,q-1)\). 
		\item Since \(e\) divides \(q-1\), we have \(\nu_2(e) \leq 1\). If \(2 \nmid e\), then \(4\mid n\) and (\ref{item: Fact q^2-1 q equiv 3 mod 4 d_2}) imply that  \(\nu_2(\gcd(n, \frac {q-1}e))=1\) and \(\nu_2(\gcd(n,\frac {q^2-1}e))=2+\min \{\nu_2(\frac n4), \nu_2(\frac {q+1}2)\}\). If \(2\mid e\), then \(\nu_2(\gcd(n,\frac {q-1}e))=0\) and \(\nu_2(\gcd(n, \frac {q-1}e))= 1+\min \{\nu_2(\frac n4), \nu_2(\frac {q+1}2)\}\). In both cases, the statement is true, because there are no common prime factors of \(q^2-1\) and \(q-1\)  other than \(2\).
	\end{enumerate}
	
\end{proof}

The following lemma specifies the order and the degree of the irreducible factors of \(X^n-a\) for any positive integer \(n\) such that  \(\rad(n)\mid \ord(a)\), a special case of \(\rad(n)\mid (q-1)\). We use it in the proof of \Cref{theorem: factorization X^n-a for gcd(n_q)=1} to determine one of the parameters (namely \(s_1\)) explicitly.  Note that the information on the degrees of the irreducible factors could be derived from \cite[Theorem 8]{WuYue2018constacyclic}. However, it would have been more complicated to state the result here and show that it is applicable for our case than to give a direct proof.

\begin{nlemma}\label{lemma: deg of irred factors of X^n-a for rad(n) mid ord(a)}
	Let \(a\in \Fq^\ast\) and  \(n\in \N\) such that \(\rad(n)\mid \ord(a)\). Set \(\frobenius {d_1}s := \gcd(n,\frac{q^s-1}{\ord(a)})\) for \(s \in \{1,2\}\). Then every root of the polynomial \(X^n-a\) has order \(\ord(a)\cdot n\) and   is a proper element of \(\Fqto{k}\), where \[k = \begin{cases}
		\frac {n }{\frobenius d 1} & \text{ if } 4\nmid n \text{ or } q\equiv 1 \pmod 4,\\
		\frac {2n}{ \frobenius d2}  & \text{ otherwise.}
	\end{cases}\]
\end{nlemma}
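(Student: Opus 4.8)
The plan is to use Proposition \ref{theorem: factorization X^(tn)-a with rad(n) mid ord(a)} (in the case $t=1$, where $X^1-a$ is trivially irreducible) to obtain the explicit factorization of $X^n-a$, and then to read off the order and the degree of the roots. I would split into the two cases of the statement according to whether $(4\nmid n$ or $q\equiv 1\pmod 4)$ holds.

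First, suppose $4\nmid n$ or $q\equiv 1\pmod 4$. Then $X^1-a$ is irreducible, $\rad(n)\mid\ord(a)\mid q-1$, and the hypothesis $4\nmid n$ or $q\equiv 1\pmod4$ of Proposition \ref{theorem: factorization X^(tn)-a with rad(n) mid ord(a)} is satisfied (with $t=1$). So with $d:=\gcd(n,\frac{q-1}{\ord(a)})=\frobenius d1$, that proposition gives
\[
X^n-a=\prod_{j=0}^{d-1}\bigl(X^{n/d}-\cyclgen d^j b\bigr),
\]
where $b\in\Fq$, $b^d=a$, and $\ord(\cyclgen d^j b)=\ord(a)\cdot d$ for all $j$. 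Each factor $X^{n/d}-\cyclgen d^jb$ is monic irreducible over $\Fq$ of degree $n/d=k$, hence by Proposition \ref{theorem: ord(X^t-a)} each of its roots has order $\tfrac nd\cdot\ord(\cyclgen d^jb)=\tfrac nd\cdot d\cdot\ord(a)=n\cdot\ord(a)$, and since the minimal polynomial over $\Fq$ of such a root is exactly that factor, of degree $k=n/d$, the root is a proper element of $\Fqto k$. This settles the first case.

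Now suppose $4\mid n$ and $q\equiv 3\pmod 4$. Here I would pass to the quadratic extension $\Fqto 2$, over which $q^2\equiv1\pmod4$, so the previous case applies over $\Fqto2$: with $\frobenius d2=\gcd(n,\frac{q^2-1}{\ord(a)})$ every root of $X^n-a$ has order $n\cdot\ord(a)$ — note $\ord(a)$ is unchanged since $a\in\Fq^\ast\subseteq\Fqto2^\ast$ — and is a proper element of $\Fqto{2n/\frobenius d2}$, i.e. the minimal polynomial of a root $\gamma$ over $\Fqto2$ has degree $2n/\frobenius d2$. It then remains to show that the minimal polynomial of $\gamma$ over $\Fq$ has degree $k=2n/\frobenius d2$ as well, i.e. that $\Fq(\gamma)=\Fqto2(\gamma)$, equivalently that $[\Fq(\gamma):\Fq]$ is even. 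Since $\ord(\gamma)=n\cdot\ord(a)$ and $4\mid n$, we have $4\mid\ord(\gamma)$; hence $4\mid q^{[\Fq(\gamma):\Fq]}-1$, which forces $[\Fq(\gamma):\Fq]$ to be even because $q\equiv3\pmod4$ gives $4\nmid q^{\text{odd}}-1$. Therefore $\Fqto2\subseteq\Fq(\gamma)$, so $[\Fq(\gamma):\Fq]=2\cdot[\Fqto2(\gamma):\Fqto2]=2n/\frobenius d2=k$, and $\gamma$ is a proper element of $\Fqto k$ with order $n\cdot\ord(a)$, as claimed.

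The main obstacle is the second case: one must be careful that applying Proposition \ref{theorem: factorization X^(tn)-a with rad(n) mid ord(a)} over $\Fqto2$ is legitimate ($\rad(n)\mid\ord_{\Fqto2^\ast}(a)$ still holds since $\ord(a)$ does not change under field extension, and $\rad(n)\mid q-1\mid q^2-1$), and above all the argument that the degree over $\Fq$ is exactly twice the degree over $\Fqto2$ — for which the key input is that $4\mid\ord(\gamma)$ together with $q\equiv3\pmod4$. Everything else is a routine combination of Propositions \ref{theorem: factorization X^(tn)-a with rad(n) mid ord(a)} and \ref{theorem: ord(X^t-a)}.
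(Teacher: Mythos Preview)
Your proof is correct, and in the first case it matches the paper's proof exactly: both apply Proposition~\ref{theorem: factorization X^(tn)-a with rad(n) mid ord(a)} with \(t=1\) and read off the degree and (via Proposition~\ref{theorem: ord(X^t-a)}) the order. One small slip of the pen: when you apply the first case over \(\Fqto2\), the minimal polynomial of \(\gamma\) over \(\Fqto2\) has degree \(n/\frobenius d2\), not \(2n/\frobenius d2\) (you use the correct value in your final computation \([\Fq(\gamma):\Fq]=2\cdot[\Fqto2(\gamma):\Fqto2]=2n/\frobenius d2\), so this is purely cosmetic).

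In the second case your argument is genuinely different from the paper's and somewhat more direct. The paper proceeds by invoking Corollary~\ref{theorem: corollary KK11 lemma 1} (factorization over \(\Fq\) via \(q\)-spins of the \(\Fqto2\)-factors) and then appeals to Fact~\ref{fact: gcd(n q^2-1) for q=3 mod 4} to see that \(2\mid \frobenius d2/\frobenius d1\), hence \(\ord(a)\cdot\frobenius d2\nmid q-1\), hence each \(\Fqto2\)-factor has \(\coeffdeg q{\,\cdot\,}=2\), giving degree \(2n/\frobenius d2\) over \(\Fq\). You instead argue directly on the root: since \(4\mid n\mid\ord(\gamma)\) and \(q\equiv 3\pmod4\), the degree \([\Fq(\gamma):\Fq]\) must be even, so \(\Fqto2\subseteq\Fq(\gamma)\) and the degree over \(\Fq\) is exactly twice the degree over \(\Fqto2\). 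Your route bypasses both the spin machinery and Fact~\ref{fact: gcd(n q^2-1) for q=3 mod 4}; the paper's route has the advantage of staying inside the factorization framework that is reused elsewhere, but for this lemma in isolation your argument is cleaner.
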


\begin{proof} It suffices to determine the order and the degree of the irreducible factors of \(X^n-a\). We set \(s:= 1\) if \(4\nmid n\) or \(q\equiv 1 \pmod 4\), otherwise we set \(s:=2\). Then the factorization of \(X^n-a\) over \(\Fqto s\) is given by \Cref{theorem: factorization X^(tn)-a with rad(n) mid ord(a)} for \(t=1\):
	\begin{equation}\label{eq: factorization X^n-a rad(n) mid ord(a)}
		\prod_{j=0}^{\frobenius ds-1} (X^{t\cdot \frac n{
				\frobenius d s}}-\cyclgen{\frobenius ds}^jb),
	\end{equation}
	where \(b\in \Fqto s\) such that \(b^{\frobenius ds}=a\) and  \(\ord(\cyclgen{\frobenius {d} s}^jb)=\ord(a)\cdot \frobenius d s\) for every \(0 \leq j \leq \frobenius ds-1\).  \Cref{theorem: ord(X^t-a)} implies that for every \(0 \leq j \leq \frobenius ds-1\) the order of \(X^{\frac{n} {\frobenius d s}}-\cyclgen{\frobenius ds}^jb\) is \(\ord(a)\cdot n\). Consequently, the order of every root of the polynomial \(X^n-a\) equals \(\ord(a)\cdot n\). 
	
	If \(4\nmid n\) or \(q\equiv 1 \pmod 4\), then (\ref{eq: factorization X^n-a rad(n) mid ord(a)}) is the factorization of \(X^n-a\) over \(\Fq\) and every irreducible factor has degree \(\frac n {\frobenius d1}\).
	
	If \(4\mid n\) and \(q\equiv 3 \pmod 4\), then (\ref{eq: factorization X^n-a rad(n) mid ord(a)}) is the factorization of \(X^n-a\) over \(\Fqto 2\) and with \Cref{theorem: corollary KK11 lemma 1}, the factorization of \(X^n-a\) over \(\Fq\) is given by \(\prod_{j \in \mathcal J} \spin q {X^{\frac{n}{\frobenius d 2}}-\cyclgen{\frobenius d 2}^jb}\), where \(\mathcal J\) is a representative system of \(\{0, \ldots, \frobenius d2-1\}/\sim\). The equivalence relation \(\sim\) is defined as \(j \sim \tilde j \) if and only \(\cyclgen{\frobenius d 2}^{\tilde j}b = (\cyclgen{\frobenius d2}^jb)^{q^m}\) for an integer \(m \in \{0,1\}\). For every \(j \in \mathcal J\) the order of \(\cyclgen{\frobenius d 2}^{j}b\) equals \(\ord(a)\cdot \frobenius d2\). Since \(\rad(n)\mid q-1\), \(4\mid n\) and \(q\equiv 3 \pmod 4\), \Cref{fact: gcd(n q^2-1) for q=3 mod 4} (\ref{item: Fact q^2-1 q equiv 3 mod 4 d_1}) implies that \(2 \mid \frac {\frobenius d2}{\frobenius d1}\) and \(\frobenius d2\cdot \ord(a)\) does not divide \(q-1\). Consequently, for all \(j\in \mathcal J\) holds \(\coeffdeg q {X^{\frac n {\frobenius d2}}-\cyclgen{\frobenius d2}^jb} = 2\) and every irreducible factor of \(X^n-a\) over \(\Fq\) has degree \(2 \cdot \frac n {\frobenius d2}\).	
\end{proof}
}

In the proof of \Cref{theorem: factorization X^n-a for gcd(n_q)=1} we use the following fact to split the equivalence relation \(\sim\) from \Cref{theorem: corollary KK11 lemma 1} on the irreducible factors of \(X^n-a\) in the extension field \(\Fqto s\) into two separate equivalence relations. This simplifies finding the representative system \(\mathcal R/\sim\) significantly.

\begin{fact}\label{lemma: a^m=b^m iff a=b}
	Let \((G,\cdot )\) be a finite abelian group and  \(a,b\in G\).
	\begin{enumerate}[(i)]
		\item Let \(\gcd(\ord(a), \ord(b))=1\) and \(m_1, m_2,n_1,n_2\in \N\). Then \(a^{m_1}b^{n_1}=a^{m_2}b^{n_2}\) if and only if \(a^{m_1}=a^{m_2}\) and \(b^{n_1}=b^{n_2}\). \label{item: Fact a b coprime order} \iffalse \(m_1\equiv m_2 \pmod {\ord(a)}\) and \(n_1 \equiv n_2 \pmod {\ord(b)}\).\fi
		\item Let \(m\in \N\) such that \(\gcd(m, \ord(a)\cdot \ord(b))=1\). Then \(a^m=b^m\) if and only if \(a=b\).\label{item: Fact a^m b^m}
	\end{enumerate} 
\end{fact}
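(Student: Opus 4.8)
The plan is to prove both equivalences from two elementary facts about finite abelian groups: subgroups of coprime order intersect trivially, and the $m$-th power map on a finite abelian group is a bijection as soon as $m$ is coprime to the group order. In each part the ``if'' direction is immediate, so all the work is in the converse.

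For part (\ref{item: Fact a b coprime order}), I would rewrite the hypothesis $a^{m_1}b^{n_1}=a^{m_2}b^{n_2}$ as $a^{m_1-m_2}=b^{n_2-n_1}$, where the integer exponents are read in the finite group $G$ (so $a^{m_1-m_2}$ denotes $a^{m_1}(a^{m_2})^{-1}$, and similarly for $b$). This common value lies in $\langle a\rangle\cap\langle b\rangle$, whose order divides both $\ord(a)$ and $\ord(b)$ by Lagrange's theorem and hence divides $\gcd(\ord(a),\ord(b))=1$. Therefore $\langle a\rangle\cap\langle b\rangle=\{e\}$, which forces $a^{m_1-m_2}=e=b^{n_2-n_1}$, i.e. $a^{m_1}=a^{m_2}$ and $b^{n_1}=b^{n_2}$.

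For part (\ref{item: Fact a^m b^m}), I would work inside $H:=\langle a,b\rangle$. Since $G$ is abelian, $H=\langle a\rangle\langle b\rangle$, so $|H|=\ord(a)\ord(b)/|\langle a\rangle\cap\langle b\rangle|$ divides $\ord(a)\cdot\ord(b)$; in particular $\gcd(m,|H|)=1$. Choosing $m'\in\N$ with $mm'\equiv 1\pmod{|H|}$, the map $x\mapsto x^m$ is an endomorphism of the abelian group $H$ whose inverse is $x\mapsto x^{m'}$ (because $x^{mm'}=x$ for every $x\in H$), hence a bijection. Consequently $a^m=b^m$ implies $a=b$.

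There is no real obstacle here: the only point needing a little care is the exponent bookkeeping — permitting negative integer exponents in a finite group in part (\ref{item: Fact a b coprime order}), and keeping track that in part (\ref{item: Fact a^m b^m}) the relevant modulus is $|H|$ rather than $\ord(a)\cdot\ord(b)$ itself — but both reductions follow at once from Lagrange's theorem together with the stated coprimality hypotheses.
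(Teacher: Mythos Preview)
Your proof is correct and essentially follows the paper's argument. Part~(\ref{item: Fact a b coprime order}) is identical to the paper's: both rewrite the hypothesis as $a^{m_1-m_2}=b^{n_2-n_1}$ and use the trivial intersection of $\langle a\rangle$ and $\langle b\rangle$. For part~(\ref{item: Fact a^m b^m}) the paper works directly with the single element $ab^{-1}$, observing that its order divides both $m$ and $\ord(a)\cdot\ord(b)$ and hence equals~$1$, whereas you establish the slightly stronger fact that the $m$-th power map is a bijection on $\langle a,b\rangle$; the underlying reason---$\gcd(m,\ord(ab^{-1}))=1$---is the same in both.
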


\begin{proof}
	\begin{enumerate}[(i)]
		\item The equation \(a^{m_1}b^{n_1}=a^{m_2}b^{n_2}\) is equivalent to \(a^{m_1-m_2} = b^{n_2-n_1}\). Since \(\ord(a)\) and \(\ord(b)\) are coprime, this equation holds if and only if \(a^{m_1-m_2} = 1 = b^{n_2-n_1}\) and the statement holds. \iffalse This is equivalent to \(\ord(a)\mid m_1-m_2\) and \(\ord(b)\mid n_2-n_1\). \fi
		\item If \(a=b\), then obviously \(a^m=b^m\).  If \(a^m=b^m\), then \(1 = (\frac ab)^m\) and therefore \(\ord(\frac ab)\mid m\). Since \(\ord(b^{-1})=\ord(b)\), we know that \((\frac ab)^{\ord(a)\cdot \ord(b)}=1\) and thus \(\ord(\frac ab)\) divides \( \ord(a)\cdot \ord(b)\). Then \(\ord(\frac ab) =1\), because \(\gcd(m, \ord(a)\cdot \ord(b))=1\). 
	\end{enumerate}
	
\end{proof}

Let \(\mathcal R\) and \(\sim\) be as in \Cref{theorem: corollary KK11 lemma 1} and suppose that the irreducible factor \(R_i\) is of the form \(X^t-\cyclgen d^i\) for fixed positive integers \(t\) and \(d\) such that \(\gcd(d,q)=1\). Then \(R_i\sim R_{\tilde i}\) if and only if \(\cyclgen d^i = (\cyclgen{d}^{\tilde i})^{q^j}\) for a positive integer \(j\). This condition is equivalent to \(\tilde i \equiv i \cdot q^j \pmod d\).  We introduce the \textit{\(q\)-cyclotomic coset  modulo \(d\) containing the positive integer \(i\)}, which is defined as 
\[\cyclocoset q d  i:=\{ i \cdot q^j \pmod d: j\geq 0\}.\]
Then \(R_i\sim R_{\tilde i}\) if and only if \(\tilde i \in \cyclocoset{q}{d}{i}\). 
Let \(\cyclorepsystem q d\) denote a complete set of representatives of the \(q\)-cyclotomic cosets modulo \(d\). With this definition we can formulate our main result -  one closed formula for the factorization of \(X^n-a\) for any positive integer \(n\) such that \(\gcd(n,q)=1\) and any element \(a\) of \( \Fq^\ast\).

\changed{	\begin{tcolorbox}[breakable]
		\begin{theorem}\label{theorem: factorization X^n-a for gcd(n_q)=1}
			Let \(a\in \Fq^\ast\) and  \(n\in\N\) such that \(\gcd(n,	q)=1\) and  \(n=n_1 \cdot n_2,\) where \(\rad(n_1)\mid \ord(a)\) and \(\gcd(n_2, \ord(a))=1\). Let \(w:=\ord_{\rad(n)}(q)\) and set  \(s:=w\) if \(4\nmid n\) or \(q^w\equiv 1 \pmod 4\), else set \(s:=2w\). For all positive integers \(t\)   we set \(\frobenius{d_1} t := \gcd(n_1, \frac {q^t-1}{\ord(a)})\) and \(\frobenius{d_2} t:= \gcd(n_2, q^t-1)\). If \(4 \nmid \frobenius {d_1} s \) or \(q\equiv 1 \pmod 4\), then \(s_1:=\frac {\frobenius{d_1}s} {\frobenius {d_1}1}\), otherwise  \(s_1:=  \frac {2{\frobenius{d_1}s}} {\frobenius {d_1}2} \). For every \(i \in \cyclorepsystem{q}{\frobenius {d_2}s}\) we set \(t_i :=  \min\{t\in \N: \frac{\frobenius{d_2}s}{\frobenius{d_2}t}\mid i\}\)  and \(c_i:= \lcm(t_i,s_1)\). 
			
			Then there exists \(b\in \Fqto{s}\) such that \(b^{\frobenius{d_1}{s}} = a\) and the factorization of \(X^n-a\) over \(\Fq\) is 
			\begin{equation*}
				\prod_{j\in \{0,\ldots, \frobenius{d_1}{s}-1\}/\sim } \; \prod_{v\mid \frac {n_2}{\frobenius{d_2}{s}}} \; \prod_{\substack{i \in \cyclorepsystem{q}{\frobenius{d_2}{s}}\\ \gcd(i,v)=1}} \prod_{m=0}^{\gcd(t_i,s_1)-1}  S_{(j,v,i,m)},\; %\spin q {R_{j,v,i}}, 
			\end{equation*} 
			where for all applicable \((j,v,i,m)\) the monic irreducible polynomial  \(S_{(j,v,i,m)}\) of degree \(\frac {n_1}{\frobenius{d_1}s} \cdot v \cdot c_i\) and order \(\ord(a) \cdot n_1 \cdot v\cdot   \frac {\frobenius{d_2}{s}}{\gcd(i, \frobenius{d_2}{s})}\) is defined as  \begin{equation*}
				S_{(j,v,i,m)}=\sum_{l=0}^{c_{i}} X^{\frac {n_1}{\frobenius {d_1} s} \cdot v \cdot l}\cdot   (-1)^{c_i-l} \sum_{\substack{\mathcal U \subseteq \{0, \ldots, c_{i}-1\}\\|\mathcal U|=c_i-l}} \prod_{u\in \mathcal U}  (\cyclgen{\frobenius {d_2} s}^{i\cdot q^m} (\cyclgen{\frobenius{d_1} s}^{j} b)^{rv})^{q^u} ,
			\end{equation*} where \(r\) is a  positive integer satisfying \changed{\(r=1\) if \(a=1\) or  \(rn_2 \equiv 1 \pmod{\ord(a)\cdot \frobenius{d_1}{s}}\) otherwise}. 	Furthermore, for all \(j,\tilde j\in \{0 , \ldots, \frobenius {d_1} s-1\}\) holds \(j\sim \tilde j\) if and only  if \(\cyclgen{\frobenius{d_1} s}^{\tilde j} = \cyclgen{\frobenius{d_1}{s}}^{j\cdot q^m}b^{q^m-1}\) for an integer \(0\leq m\leq s_1-1\).
		\end{theorem}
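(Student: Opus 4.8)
The plan is to lift the problem to a suitable extension $\Fqto s$, apply \Cref{theorem: factorization of X^n-a for rad(n) mid q-1_4 nmid n or q = 1 mod 4} there, and descend back to $\Fq$ via \Cref{theorem: corollary KK11 lemma 1}. First I would check that with $s\in\{w,2w\}$ as in the statement one has $\rad(n)\mid q^s-1$ and ($4\nmid n$ or $q^s\equiv 1\pmod 4$), and that $\gcd(n_1,n_2)=1$ (every prime of $n_1$ divides $\ord(a)$, hence not $n_2$), so in particular $\gcd(n_2,\ord(a)\cdot\frobenius{d_1}s)=1$ and the required $r$ exists. Applying \Cref{theorem: factorization of X^n-a for rad(n) mid q-1_4 nmid n or q = 1 mod 4} over $\Fqto s$ then yields, for some $b\in\Fqto s$ with $b^{\frobenius{d_1}s}=a$ and $\ord(\cyclgen{\frobenius{d_1}s}^{\,j}b)=\ord(a)\frobenius{d_1}s$ for all $j$,
\[
X^n-a=\prod_{j=0}^{\frobenius{d_1}s-1}\ \prod_{v\mid \frac{n_2}{\frobenius{d_2}s}}\ \prod_{\substack{i=0\\\gcd(i,v)=1}}^{\frobenius{d_2}s-1}\bigl(X^{\frac{n_1}{\frobenius{d_1}s}v}-\cyclgen{\frobenius{d_2}s}^{\,i}(\cyclgen{\frobenius{d_1}s}^{\,j}b)^{rv}\bigr)=:\prod_{(j,v,i)}R_{(j,v,i)}
\]
over $\Fqto s$. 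By \Cref{theorem: corollary KK11 lemma 1} the factorization over $\Fq$ is $\prod\spin q R$ over a set of representatives of the orbits of the $\Fqto s/\Fq$ Frobenius $\sigma_q$ acting on the $R_{(j,v,i)}$, so it remains to (a) determine those orbits, (b) compute $\coeffdeg q{R_{(j,v,i)}}$, and (c) expand each spin.

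For (a): since $\sigma_q$ only changes coefficients it fixes the degree $\frac{n_1}{\frobenius{d_1}s}v$, hence fixes $v$, and sends the constant term $\theta_{(j,v,i)}:=\cyclgen{\frobenius{d_2}s}^{\,i}(\cyclgen{\frobenius{d_1}s}^{\,j}b)^{rv}$ to $\theta_{(j,v,i)}^{\,q}$. Here $\ord(\cyclgen{\frobenius{d_2}s}^{\,i})$ divides $\frobenius{d_2}s\mid n_2$ while $\ord((\cyclgen{\frobenius{d_1}s}^{\,j}b)^{rv})=\ord(a)\frobenius{d_1}s$ (as $\gcd(rv,\ord(a)\frobenius{d_1}s)=1$), and these two orders are coprime; so by \Cref{lemma: a^m=b^m iff a=b}(i),(ii) the map $(j,v,i)\mapsto R_{(j,v,i)}$ is injective and $\sigma_q$ acts coordinatewise as $i\mapsto iq\pmod{\frobenius{d_2}s}$ and $j\mapsto\tilde j$ with $\cyclgen{\frobenius{d_1}s}^{\tilde j}=\cyclgen{\frobenius{d_1}s}^{\,jq}b^{q-1}$ (the relation $\sim$ of the statement; it is well posed since $a^{q-1}=1$). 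A one-line computation gives $\cyclgen{\frobenius{d_1}s}^{\tilde j}b=(\cyclgen{\frobenius{d_1}s}^{\,j}b)^q$, so the $\sim$-orbit of any $j$ has size $[\Fq(\cyclgen{\frobenius{d_1}s}^{\,j}b):\Fq]$, i.e.\ the coefficient degree of $X^{n_1/\frobenius{d_1}s}-\cyclgen{\frobenius{d_1}s}^{\,j}b$ over $\Fq$, which by \Cref{lemma: deg of irred factors of X^n-a for rad(n) mid ord(a)} (plus a short $2$-adic check that the case conditions ``$4\nmid\frobenius{d_1}s$ or $q\equiv 1\pmod 4$'' and ``$4\nmid n_1$ or $q\equiv 1\pmod 4$'' coincide here) equals $s_1$ for every $j$. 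Likewise the $q$-cyclotomic coset of $i$ modulo $\frobenius{d_2}s$ has size $\ord_{\frobenius{d_2}s/\gcd(i,\frobenius{d_2}s)}(q)$, which one identifies with $t_i=\min\{t:\frac{\frobenius{d_2}s}{\frobenius{d_2}t}\mid i\}$, and one checks $\gcd(i,v)=1$ is constant on cosets. Thus, fixing $v$ and representatives $j$ (a $\sim$-class) and $i\in\cyclorepsystem q{\frobenius{d_2}s}$, the $\sigma_q$-orbits inside the $s_1\times t_i$ rectangle $\{(\tilde j,\tilde i)\}$ are the orbits of the diagonal shift on $\Z/s_1\times\Z/t_i$: there are $\gcd(t_i,s_1)$ of them, each of size $c_i=\lcm(t_i,s_1)$, with representatives $R_{(j,v,iq^m)}$, $m=0,\dots,\gcd(t_i,s_1)-1$.

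For (b): since a product of two elements of coprime multiplicative order generates the compositum of the fields they generate, $\coeffdeg q{R_{(j,v,iq^m)}}=[\Fq(\theta_{(j,v,iq^m)}):\Fq]=\lcm(t_i,s_1)=c_i$, independent of $j,v,m$; and $t_i\mid s$, $s_1\mid s$ give $c_i\mid s$, so $R_{(j,v,iq^m)}\in\Fqto{c_i}[X]$ is irreducible over $\Fqto{c_i}$ and $\spin q{R_{(j,v,iq^m)}}=\prod_{u=0}^{c_i-1}\frobenius{R_{(j,v,iq^m)}}u$ is irreducible over $\Fq$ by \Cref{KK11: Lemma 1}. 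For (c): expanding $\prod_{u=0}^{c_i-1}\bigl(X^{\frac{n_1}{\frobenius{d_1}s}v}-\theta_{(j,v,iq^m)}^{\,q^u}\bigr)$ through the elementary symmetric functions of $\theta_{(j,v,iq^m)},\theta_{(j,v,iq^m)}^{\,q},\dots$ produces verbatim the displayed formula for $S_{(j,v,i,m)}$, monic of degree $\frac{n_1}{\frobenius{d_1}s}v\,c_i$; its order is that of $X^{\frac{n_1}{\frobenius{d_1}s}v}-\theta_{(j,v,iq^m)}$, which by \Cref{theorem: ord(X^t-a)} equals $\frac{n_1}{\frobenius{d_1}s}v\cdot\ord(\theta_{(j,v,iq^m)})=\ord(a)\,n_1\,v\cdot\frac{\frobenius{d_2}s}{\gcd(i,\frobenius{d_2}s)}$ (using coprimality of the two halves of $\theta$ and $\gcd(iq^m,\frobenius{d_2}s)=\gcd(i,\frobenius{d_2}s)$). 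Collecting $\spin q{R_{(j,v,iq^m)}}=S_{(j,v,i,m)}$ over all $j,v,i,m$ gives the asserted factorization. I expect step (a) to be the main obstacle: cleanly decoupling the Frobenius action through the coprimality of the two parts of $\theta$, pinning down the closed forms of $s_1$ (the $2$-adic bookkeeping being the fussiest point) and of $t_i$, and counting the resulting orbits without over- or under-counting; the symmetric-function expansion in (c) is purely formal.
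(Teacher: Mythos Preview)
Your proposal is correct and follows essentially the same route as the paper: lift to $\Fqto s$ via \Cref{theorem: factorization of X^n-a for rad(n) mid q-1_4 nmid n or q = 1 mod 4}, descend with \Cref{theorem: corollary KK11 lemma 1}, decouple the Frobenius action on the constant term through \Cref{lemma: a^m=b^m iff a=b}, identify the $j$-orbit size as $s_1$ via \Cref{lemma: deg of irred factors of X^n-a for rad(n) mid ord(a)} and the $i$-orbit size as $t_i$, count the diagonal orbits on $\Z/s_1\times\Z/t_i$, and expand the spin through elementary symmetric functions with the order coming from \Cref{theorem: ord(X^t-a)}. Your organization into (a)/(b)/(c) and the explicit flags (the $2$-adic check for $s_1$, that $\gcd(i,v)=1$ is constant on $q$-cosets) are exactly the points the paper handles, in the same way.
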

	\end{tcolorbox}

\begin{proof}
	Since \(\rad(n)\mid q^s-1\) and \(4\nmid n\) or \(q^s\equiv 1 \pmod 4\),  with \Cref{theorem: factorization of X^n-a for rad(n) mid q-1_4 nmid n or q = 1 mod 4} the factorization of \(X^n-a\) over \(\Fqto{s}\) is given by:
	\begin{equation*} \label{eq: factorization X^n-a over Fqto s}	
		X^{n}-a = \prod_{j=0}^{\frobenius  {d_1} s-1} \prod_{v \mid \frac {n_2}{\frobenius{d_2} s}} \prod_{\substack{i=0\\\gcd(i,v)=1}} ^{\frobenius{d_2} s-1}   (X^{\frac {n_1}{\frobenius {d_1} s} \cdot v}- \cyclgen{\frobenius {d_2} s}^i (\cyclgen{\frobenius{d_1} s}^{j} b)^{rv}),	
	\end{equation*} 
	where   \(b\in \Fqto{s}\) such that \(b^{\frobenius {d_1} s} = a\), \(r\cdot n_2 \equiv 1 \pmod{\ord(a)\cdot \frobenius {d_1} s}\) and \(\ord(\cyclgen{\frobenius {d_2} s}^i (\cyclgen{\frobenius{d_1} s}^{j} b)^{rv}) = \ord(a) \cdot \frobenius{d_1}{s} \cdot \frac {\frobenius{d_2}{s}}{\gcd(i, \frobenius{d_2}{s})}\) for all applicable \((j,v,i)\). We define 
	\begin{align*}
		\mathcal J = &\{(j,v,i): j \in \{0,\ldots, \frobenius {d_1}s-1\}, v \mid \frac {n_2}{\frobenius{d_2}{s}}, i  \in \{0, \ldots, \frobenius{d_2}{s}-1\}, ~ \gcd(i,v)=1 \}
	\end{align*} to be the set of all applicable \((j,v,i)\).  For all  \((j,v,i)\in \mathcal J\) we set	\(R_{j,v,i} := (X^{\frac {n_1}{\frobenius {d_1} s} \cdot v}- \cyclgen{\frobenius {d_2} s}^i (\cyclgen{\frobenius{d_1} s}^{j} b)^{rv})\) and \(c_i := \coeffdeg q{R_{j,v,i}}\).  With \Cref{theorem: factorization of X^n-a for rad(n) mid q-1_4 nmid n or q = 1 mod 4} holds   \[c_i = \ord_{\ord(\cyclgen{\frobenius {d_2} s}^i (\cyclgen{\frobenius{d_1} s}^{j} b)^{rv})}(q) = \ord_{\ord(a) \cdot \frobenius{d_1}{s} \cdot \frac {\frobenius{d_2}{s}}{\gcd(i, \frobenius{d_2}{s})}}(q),\]   which only depends on \(i\). Additionally, on \(\mathcal J\) we define an equivalence relation \(\approx\) as follows: \((j,v,i)\approx (\tilde j , \tilde v , \tilde i)\) if and only if \(R_{(\tilde j, \tilde v , \tilde i)} =  \frobenius{R_{(j,v,i)}}{m}\) for an integer \(0 \leq m  \leq c_i-1\). Meaning that \(R_{(\tilde j, \tilde v , \tilde i)}\) is a factor of the \(q\)-spin of  \(R_{(j,v,i)}\). Then from \Cref{theorem: corollary KK11 lemma 1} follows that the factorization of \(X^n-a\) over \(\Fq\) is given by:
	\(\prod_{(j,v,i)\in \mathcal J/\approx} \spin q {R_{(j,v,i)}}, \)
	where \(\spin q{R_{(j,v,i)}} = \prod_{m=0}^{c_i-1} \frobenius {R_{(j,v,i)}} m=: S_{(j,v,i)}\). Since \(R_{(j,v,i)}\) is a binomial, we can easily compute  \(S_{(j,v,i)}\):
	\[S_{(j,v,i,m)}=\sum_{l=0}^{c_{i}} X^{\frac {n_1}{\frobenius {d_1} s} \cdot v \cdot l}\cdot   (-1)^{c_i-l}\cdot  \sum_{\substack{\mathcal U \subseteq \{0, \ldots, c_{i}-1\}\\|\mathcal U|=c_i-l}} \prod_{u\in \mathcal U}  (\cyclgen{\frobenius {d_2} s}^{i\cdot q^m} (\cyclgen{\frobenius{d_1} s}^{j} b)^{rv})^{q^u},\]
	and from \Cref{theorem: ord(X^t-a)} follows that \(\ord(S_{(j,v,i)}) = \ord(R_{(j,v,i)}) = \frac {n_1}{\frobenius{d_1}s}\cdot v \cdot \ord(\cyclgen{\frobenius {d_2} s}^i (\cyclgen{\frobenius{d_1} s}^{j} b)^{rv}) = \frac {n_1}{\frobenius{d_1}s}\cdot v \cdot   \ord(a) \cdot \frobenius{d_1}{s} \cdot \frac {\frobenius{d_2}{s}}{\gcd(i, \frobenius{d_2}{s})} = n_1 \cdot v \cdot \ord(a) \cdot \frac {\frobenius{d_2}{s}}{\gcd(i, \frobenius{d_2}{s})}\).
	
	It remains to determine a representative system \(\mathcal J/\approx\).  For \(R_{(j,v,i)}\) and \(R_{(\tilde j, \tilde v , \tilde i)}\) to be of the same \(q\)-spin, the two polynomials  need to be of the same degree, which implies that  \(v = \tilde v\).  Furthermore, \(\cyclgen{\frobenius {d_2} s}^{\tilde i} (\cyclgen{\frobenius{d_1} s}^{\tilde j} b)^{rv} = \cyclgen{\frobenius {d_2} s}^{iq^m} (\cyclgen{\frobenius{d_1} s}^{j} b)^{rvq^m} \) for an integer  \(0\leq m\leq c_i-1\).  Since \(\gcd(\frobenius{d_2}{s}, \ord(a)\cdot \frobenius{d_1}{s})=1\), with \Cref{lemma: a^m=b^m iff a=b} \ref{item: Fact a b coprime order} this is equivalent to
	\begin{align}
		&\cyclgen{\frobenius{d_2}{s}}^{\tilde i} =\cyclgen{\frobenius{d_2}{s}}^{iq^m}\label{eq: condition on i}\\
		\text{and} \quad  & (\cyclgen{\frobenius{d_1} s}^{\tilde j} b)^{rv} = (\cyclgen{\frobenius{d_1} s}^{j} b)^{rvq^m} \label{eq: condition on j}
	\end{align} 
	for an integer \(0 \leq m \leq c_i-1\).	Condition (\ref{eq: condition on i})  is equivalent to \(\tilde i = i \cdot q^m \pmod {\frobenius {d_2} s}\). We  define \(\frobenius i m := i\cdot q^m \pmod{\frobenius {d_2} s}\) for all nonnegative integers \(m\). This is in fact an enumeration of the elements in \(\cyclocoset q {\frobenius{d_2}{s}} {i}\),  the \(q\)-cyclotomic coset  of \(i\) modulo \(\frobenius {d_2}s\). Since \(\cyclgen {\frobenius {d_2}s}^i\) is a proper element of \(\Fqto {t_i}\), \(\cyclocoset q {\frobenius{d_2}{s}} {i}\)  contains exactly \(t_i :=\ord_{\frac {\frobenius {d_2} s}{\gcd(i, \frobenius {d_2}s)}}(q)\) elements. Consequently, \(\frobenius i {t_i} = \frobenius i 0 = i\) and the elements \(\frobenius i 0, \frobenius i 1, \ldots, \frobenius i {t_i-1}\) are all distinct.
	
	Concerning equation (\ref{eq: condition on j}),  \(v\) divides \(n_2\) and \(rn_2\equiv 1 \pmod {\ord(a)\cdot \frobenius{d_1}{s}}\), which implies that \(\gcd(rv, \ord(a)\cdot \frobenius{d_1}{s})=1\). Then with \Cref{lemma: a^m=b^m iff a=b} \ref{item: Fact a^m b^m} equation (\ref{eq: condition on j})  is equivalent to \(\cyclgen{\frobenius{d_1} s}^{\tilde j} b = (\cyclgen{\frobenius{d_1} s}^{j} b)^{q^m}\). For any nonnegative integer \(m\) we define \[\frobenius j m := \tilde j \quad \Leftrightarrow \quad  \cyclgen{\frobenius {d_1}s} ^{\tilde j} b = (\cyclgen{\frobenius {d_1}s}^jb)^{q^m}.\] Note that the elements \(\{\cyclgen {\frobenius {d_1}s}^j b: 0 \leq j \leq \frobenius{d_1}{s}\}\) are the distinct roots of the polynomial \(X^{\frobenius{d_1}{s}}-a\in \FqX\) and our definition of \(\frobenius j m \) is well-defined. Since \(\rad(\frobenius{d_1}{s})\mid \ord(a)\), we can apply \Cref{lemma: deg of irred factors of X^n-a for rad(n) mid ord(a)} and the order of \(\cyclgen{\frobenius {d_1}s}^jb\) is  \(\ord(a)\cdot \frobenius {d_1} s\) for all \(0 \leq j \leq \frobenius {d_1}s\). We set \(s_1 := \ord_{\ord(a)\cdot \frobenius{d_1}s}(q)\)
	and every element \(\cyclgen {\frobenius {d_1}s}^jb\) is a proper element of \(\Fqto{s_1}\). Consequently, \(\frobenius j {s_1} = \frobenius j 0 = j\) and the integers \(\frobenius j 0 , \frobenius j 1, \ldots, \frobenius j {s_1-1}\) are all distinct. Additionally,  \Cref{lemma: deg of irred factors of X^n-a for rad(n) mid ord(a)} yields  that 
	\[s_1 = \begin{cases}
		\frac {\frobenius{d_1}s} {\frobenius {d_1}1} & \text{if \(4 \nmid \frobenius {d_1} s \) or \(q\equiv 1 \pmod 4\)},\\
		\frac {2{\frobenius{d_1}s}} {\frobenius {d_1}2} & \text{otherwise.}
	\end{cases}\]
	We define an equivalence relation \(\sim\) on the set of integers \(\{0, \ldots, \frobenius {d_1} s\}\), which follows naturally from our observations above: For \(j,\tilde j\in \{0,\ldots, \frobenius {d_1}s-1\}\) holds \(j \sim \tilde j\)  if and only if \(\tilde j = \frobenius j m\) for an integer \(0 \leq m \leq s_1-1\).
	
	Using our  definitions, \((j,v,i)\approx (\tilde j , \tilde v, \tilde i)\) holds if and only if \((\tilde j , \tilde v, \tilde i)=(\frobenius j m, v, \frobenius i m)\) for an integer \(0 \leq m \leq c_i-1\). Thus, the equivalence class of every  \((j,v,i)\in \mathcal J\) is:
	\begin{align*}
		[(j,v,i)] = \{(\frobenius j 0, v, \frobenius i 0), (\frobenius j 1, v, \frobenius i 1), \ldots, (\frobenius j {c_i-1}, v, \frobenius i {c_i-1})\}.
	\end{align*}
	It is our goal to find a unique representant \((\hat j,  v, \hat i)\) for every equivalence class \([(j,v,i)]\) in \(\mathcal J/\approx\).  The fact that \(s_1 = \ord_{\ord(a)\cdot \frobenius{d_1}{s}}(q)\) and  \(t_i = \ord_{\frac {\frobenius {d_2}s}{\gcd(i,\frobenius{d_2} s)}}(q)\) implies that \(c_i = \ord_{\ord(a)\cdot \frobenius{d_1}{s}\cdot\frac {\frobenius {d_2}s}{\gcd(i,\frobenius{d_2} s)} } = \lcm(s_2, t_i)\). Therefore the sequence \(\frobenius j 0, \ldots, \frobenius j {c_i-1}\) runs exactly \(\frac {c_i}{s_1}\) times through the distinct elements \(\frobenius j 0 , \ldots, \frobenius j {s_1-1}\). By definition, exactly one of these \(s_1\) elements is an element of \(\{0 , \ldots, \frobenius{d_1}s-1\}/\sim\). Thus, for the representant \((\hat j,  v, \hat i)\) of the equivalence class  \([(j,v,i)]\) we can assume that \(\hat j\in \{0,\ldots, \frobenius {d_1} s-1\}/\sim\). Without loss of generality, we assume that \(\hat j = \frobenius j 0\).
	
	Now we examine for which \(i,\tilde i\in \{0, \ldots, \frobenius {d_2}s-1\}\) the equivalence classes \([(\hat j, v,i)]\) and \([(j,v,\tilde i)]\) are equal. Obviously, \(\tilde i\) needs to be an element of \(\cyclocoset{q}{\frobenius {d_2}s}{i}\). If all \(\tilde i \in \cyclocoset{q}{\frobenius {d_2}s}{i}\) satisfy \([(\hat j, v,\tilde i)]=[(\hat j, v, i)]\), then we can select \(i \in \cyclorepsystem{q}{\frobenius{d_2}s}\) and uniquely describe every equivalence class. However, not all \(\tilde i\in \cyclocoset{q}{\frobenius {d_2}{s}} i \) satisfy \([(\hat j, v,\tilde i)]=[(\hat j, v, i)]\). In fact, since \(\frobenius j {s_1} = j\) and \(\frobenius{j}l \neq j\) for all \(1\leq l < s_1\), the equivalence classes of  \([(j,v,i)]\) and \([(j,v,\frobenius i m)]\) are equal if and only if \(m =s_1\cdot l \pmod {t_i}\) for a nonnegative integer  \(l\).  Since \(\{s_1\cdot l \pmod{t_i}: l\geq 0\}=\{\gcd(s_1,t_i) \cdot l \pmod{t_i}: l\geq 0\}\), this is equivalent to \(\tilde i = \frobenius im\) for an integer \(m\in \{\gcd(s_1,t_i)\cdot l \pmod{t_i}: l \geq 0\}\). From this fact follows directly, that  if \(\gcd(s_1,t_i)=1\) then \([(\hat j,v,i)]=[(\hat j,v,\frobenius im)]\) for all \(0 \leq m \leq t_i-1\). Otherwise
	\begin{align*}
		&[(\hat j,v,i)] = [(\hat j,v,\frobenius i {\gcd(s_1,t_i)})]= \ldots=[(\hat j,v,\frobenius i {\gcd(s_1,t_i)\cdot (\frac{c_i}{\gcd(s_1,t_i)}-1)})],\\
		&[(\hat j,v,\frobenius i1)] = [(\hat j ,v,\frobenius i {\gcd(s_1,t_i)+1})] = \ldots,\\
		&\vdots\\
		&[(\hat j,v,\frobenius i {\gcd(s_1,t_i)-1} )] = [(\hat j , v, \frobenius i {2 \gcd(s_1,t_i)-1})] = \ldots. 
	\end{align*}
	The equivalence classes \([((\hat j,v,i)],[(\hat j,v,\frobenius i1)] , \ldots, [(\hat j,v,\frobenius i {\gcd(s_1,t_i)-1} )]\) are all distinct. Thus, for every \(i \in \cyclorepsystem{q}{\frobenius {d_2}s}\) such that \(\gcd(v,i)=1\)  we put  \((\hat j, v,i), (\hat j, v,\frobenius{i}{1}), \ldots, \) \((\hat j,v,\frobenius i {\gcd(s_1,t_i)-1})\) in our representative system \(\mathcal J/\approx\), which we can describe as the following cartesian  product:
	\begin{align*}
		&\{0,\ldots, \frobenius{d_1}s-1\}/\sim \\
		&\times \{(v,\frobenius im): v\mid \frac{n_2}{\frobenius{d_2}s}, i \in \cyclorepsystem{q}{\frobenius{d_2}{s}}, \gcd(i,v)=1, 0 \leq m \leq  \gcd(s_1,t_i)-1\}.
	\end{align*}
	For any \(i \in \{0, \ldots, \frobenius {d_2}s-1\}\) and any positive integer \(t\) holds \(t_i = t\) if and only if \(t\) is the smallest positive integer such that \(\frac {\frobenius {d_2}s}{\gcd(i, \frobenius {d_2}s)} \) divides \(q^t-1\). This is true only if \(\frac {\frobenius {d_2}s}{\gcd(i, \frobenius {d_2}s)}\mid \gcd(\frobenius {d_2}s, q^t-1) = \frobenius {d_2}t\). Since \(\frobenius {d_2}t\) is a divisor of \(\frobenius {d_2}s\), this is equivalent to \(\frac {\frobenius {d_2}s}{\frobenius{d_2}t} \mid i\). Thus, \(t_i = \min \{t\in \N: \frac {\frobenius {d_2}s}{\frobenius{d_2}t} \mid i\}\). Furthermore, for all \(\tilde i \in \cyclocoset{q}{\frobenius{d_2}s}{i}\) holds \(\gcd(i, \frobenius{d_2}s)= \gcd(\tilde i, \frobenius{d_2}s)\). Consequently, \(\ord(S_{(j,v,\tilde i)}) = \ord(S_{(j,v,i)})\) and \Cref{theorem: factorization X^n-a for gcd(n_q)=1} holds. 
\end{proof}}

\subsection{The factorization of $X^n-1$ and of the cyclotomic polynomial $\Phi_n$ for positive integers $n$ such that  $\gcd(n,q)=1$}

\label{subsection: factorization X^n-1 and cyclotomic polynomial}

\changed{There exist explicit  formulas for the factorization of \(X^n-1\) over \(\Fq\) for the case that \(\rad(n)\mid q^{w}-1\) where \(w=1\) (see \cite[Corollaries 1 and 2]{Brochero-MartinezGiraldo-VergaradeOliveira2015}), \(w\) is prime (see \cite[Theorems 3.2, 3.4 and 3.6]{WuYueFan2018}) or \(w\) is the product of two primes (see \cite[Theorems 3.3, 3.5, 3.8, 4.2, 4.4 and 4.7]{WuYue2021}).  With  the choice \(a=1\)  we derive the following closed formula for the factorization of \(X^n-1\) for any positive integer \(n\) such that \(\gcd(n,q)=1\) from \Cref{theorem: factorization X^n-a for gcd(n_q)=1}. \Cref{theorem: factorization of X^n-1 for gcd(n q)=1} covers all factorizations given in \cite{Brochero-MartinezGiraldo-VergaradeOliveira2015, WuYueFan2018} and \cite{WuYue2021}.
	
\begin{tcolorbox}[breakable]
	\begin{ntheorem}\label{theorem: factorization of X^n-1 for gcd(n q)=1}
		Let \(n\in\N\) such that \(\gcd(n,	q)=1\). Let \(w:=\ord_{\rad(n)}(q)\) and set  \(s:=w\) if \(4\nmid n\) or \(q^w\equiv 1 \pmod 4\), else set \(s:=2w\). Furthermore, for all positive integers \(t\)   we define \(\frobenius{d} t:= \gcd(n, q^t-1)\) and for every \(i \in \cyclorepsystem{q}{\frobenius {d}s}\) we set \(c_i :=  \min\{t\in \N: \frac{\frobenius{d}s}{\frobenius{d}t}\mid i\}\). 
		
		Then  the factorization of \(X^n-1\) into monic irreducible factors over \(\Fq\) is 
		\begin{equation*}
			\prod_{v\mid \frac {n}{\frobenius{d}{s}}} \; \prod_{\substack{i \in \cyclorepsystem{q}{\frobenius{d}{s}}\\ \gcd(i,v)=1}}   \left[\sum_{l=0}^{c_{i}} X^{v\cdot l}\cdot   (-1)^{c_i-l} \sum_{\substack{\mathcal U \subseteq \{0, \ldots, c_{i}-1\}\\|\mathcal U|=c_i-l}} \prod_{u\in \mathcal U}  \cyclgen{\frobenius {d} s}^{i\cdot q^u}\right],
		\end{equation*} 
		where for all \(v\mid \frac n {\frobenius ds}\) and all \(i \in \cyclorepsystem{q}{\frobenius ds}\) such that \(\gcd(i,v)=1\), the monic irreducible factor belonging to \((v,i)\) has degree \(vc_i\) and order \(v \cdot \frac {\frobenius d {s}}{\gcd(i,\frobenius d s)}\).
	\end{ntheorem}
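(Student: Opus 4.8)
The plan is to obtain \Cref{theorem: factorization of X^n-1 for gcd(n q)=1} as the specialization $a=1$ of the main theorem, \Cref{theorem: factorization X^n-a for gcd(n_q)=1}, simplifying every parameter. First I would note that $\ord(1)=1$, so the only admissible decomposition $n=n_1n_2$ with $\rad(n_1)\mid\ord(a)$ and $\gcd(n_2,\ord(a))=1$ is $n_1=1$, $n_2=n$. Consequently $\frobenius{d_1}{t}=\gcd\bigl(1,\tfrac{q^t-1}{1}\bigr)=1$ for every positive integer $t$, while $\frobenius{d_2}{t}=\gcd(n,q^t-1)=\frobenius{d}{t}$, which recovers the notation of the statement; in particular $\cyclorepsystem{q}{\frobenius{d_2}{s}}=\cyclorepsystem{q}{\frobenius{d}{s}}$. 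The value of $s$ is unaffected, since the defining condition ``$4\nmid n$ or $q^w\equiv1\pmod4$'' does not mention $a$.

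Next I would trace how the auxiliary quantities collapse. Because $\frobenius{d_1}{s}=1$ we always have $4\nmid\frobenius{d_1}{s}$, so only the first branch of the case distinction for $s_1$ can apply and $s_1=\frobenius{d_1}{s}/\frobenius{d_1}{1}=1$; it is worth recording that the ``otherwise'' branch is never reached. For each $i$ the integer $t_i=\min\{t\in\N:\tfrac{\frobenius{d_2}{s}}{\frobenius{d_2}{t}}\mid i\}$ is exactly the $c_i$ of the present statement, and the main theorem's $c_i=\lcm(t_i,s_1)=t_i$ coincides with it. The element $b\in\Fqto{s}$ with $b^{\frobenius{d_1}{s}}=a$ must equal $1$, and $r=1$ because $a=1$. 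Finally the outer index $j$ runs over the single equivalence class of $\{0,\ldots,\frobenius{d_1}{s}-1\}=\{0\}$, so that product degenerates to the term $j=0$, and $\gcd(t_i,s_1)=1$ makes the product over $m$ degenerate to the term $m=0$. Thus the entire double-indexing machinery of the main theorem evaporates, leaving only the products over $v\mid\frac{n}{\frobenius{d}{s}}$ and over $i\in\cyclorepsystem{q}{\frobenius{d}{s}}$ with $\gcd(i,v)=1$.

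The remaining step is a direct substitution into the closed form for $S_{(j,v,i,m)}$. With $\frac{n_1}{\frobenius{d_1}{s}}=1$, $j=0$, $m=0$, $b=1$, $r=1$, the base $\cyclgen{\frobenius{d_2}{s}}^{iq^m}(\cyclgen{\frobenius{d_1}{s}}^{j}b)^{rv}$ reduces to $\cyclgen{\frobenius{d}{s}}^{i}$ and the exponent $\frac{n_1}{\frobenius{d_1}{s}}\,v\,l$ reduces to $vl$, so $S_{(0,v,i,0)}$ becomes precisely the bracketed polynomial in the statement. The asserted degree $vc_i$ and order $v\cdot\frac{\frobenius{d}{s}}{\gcd(i,\frobenius{d}{s})}$ follow from the degree $\frac{n_1}{\frobenius{d_1}{s}}\,v\,c_i$ and order $\ord(a)\,n_1\,v\,\frac{\frobenius{d_2}{s}}{\gcd(i,\frobenius{d_2}{s})}$ supplied by the main theorem under the same substitution. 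I do not expect a genuine obstacle: the only point needing care is the bookkeeping that confirms $s_1=1$ in every case and that the $j$- and $m$-products each contribute exactly one factor, since that is what turns the general constacyclic formula into the simpler cyclic one.
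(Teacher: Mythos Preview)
Your proposal is correct and follows essentially the same route as the paper: both specialize \Cref{theorem: factorization X^n-a for gcd(n_q)=1} at \(a=1\), observe that \(\ord(1)=1\) forces \(n_1=1\), \(n_2=n\), \(\frobenius{d_1}{t}=1\), \(b=1\), \(r=1\), \(s_1=1\), so that the \(j\)- and \(m\)-products collapse and only the \((v,i)\)-indexing survives. Your justification of \(s_1=1\) via the case distinction (\(4\nmid\frobenius{d_1}{s}=1\)) is if anything slightly more explicit than the paper's appeal to the order of \(b\).
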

\end{tcolorbox}	
	
	\begin{proof}
		We apply \Cref{theorem: factorization X^n-a for gcd(n_q)=1} for \(a = 1\). Then \(\ord(a)=1\) implies that \(1 =n_1 = \frobenius {d_1} t\) for all  positive integers \(t\) and \(n=n_2\). Furthermore, \(\cyclgen{\frobenius {d_1}s} = 1\) and  the element \(b\in \Fq\) such that \(b^{\frobenius {d_1}s}=a\) can be set to \(1\). Its order is obviously \(1\), which implies that \(s_1 = 1\). The representative system \(\{0, \ldots, \frobenius{d_1}s-1\}/\sim = \{0\}/\sim\) is \(\{0\}\) and we can set \(j=0\) for all irreducible factors. Furthermore, for all \(i \in \cyclorepsystem{q}{\frobenius {d_2}s}\) holds  \(\gcd(t_i,s_1)=1\). \changed{ Since \(a=1\) the positive integer \(r\) satisfies \(r=1\).}  Set \(\frobenius d s := \frobenius {d_2}s\). Then the factorization of \(X^n-1\) over \(\Fq\) is given by
		\[\prod_{v \mid \frac {n}{\frobenius {d}s}} \prod_{\substack{i \in \cyclorepsystem{q}{\frobenius {d}s}\\ \gcd(i,v)=1}} S_{(0,v,i,0)} = \prod_{v \mid \frac {n}{\frobenius {d}s}} \prod_{\substack{i \in \cyclorepsystem{q}{\frobenius {d}s}\\ \gcd(i,v)=1}} = \sum_{l=0}^{c_i} X^{vl} \cdot  (-1)^{c_i-l} \sum_{\substack{\mathcal U \subseteq \{0, \ldots, c_{i}-1\}\\|\mathcal U|=c_i-l}}  \prod_{u\in \mathcal U} (\cyclgen{\frobenius d s}^i)^{q^u}.\]
		\(S_{(0,v,i,0)}\) has degree \(v\cdot c_i\) and order \(v \cdot \frac{\frobenius ds} {\gcd(i, \frobenius ds)}\).
	\end{proof}

	Since   \(X^n-1\) is the product \(\prod_{d\mid n} \Phi_d\), the factorization of the \(n\)-th cyclotomic polynomial \(\Phi_n\) is given by the product of all monic irreducible factors of order \(n\) in the factorization of \(X^n-1\). Thus, the following result is a corollary of \Cref{theorem: factorization of X^n-1 for gcd(n q)=1}. The result covers all explicit factorizations given in \cite{FY2007,WangWang2012,TW2013,WZFY2017,Alshareef2018}. Even though the result follows  from our main theorem, we  give a direct proof. 
	
	\begin{tcolorbox}[breakable]
		\begin{ntheorem}\label{theorem: factorization cyclotomic polynomial}
			Let \(n\in\N\) such that \(\gcd(n,	q)=1\). Let \(w:=\ord_{\rad(n)}(q)\) and set  \(s:=w\) if \(4\nmid n\) or \(q^w\equiv 1 \pmod 4\), else set \(s:=2w\). Furthermore, we define \(\frobenius{d} s:= \gcd(n, q^s-1)\). 
			
			Then  the factorization of the cyclotomic polynomial \(\Phi_n\) into \(\frac {\varphi(\frobenius ds)}{s} \) monic irreducible factors of degree \(\frac n {\frobenius ds}\cdot s\) over \(\Fq\) is 
			\begin{equation*}
				\prod_{\substack{i \in \cyclorepsystem{q}{\frobenius{d}{s}}\\ \gcd(i,\frobenius ds)=1}}   \left[\sum_{l=0}^{s}X^{\frac{n}{\frobenius d s}\cdot l}\cdot   (-1)^{s-l} \sum_{\substack{\mathcal U \subseteq \{0, \ldots, s-1\}\\|\mathcal U|=s-l}} \prod_{u\in \mathcal U}  \cyclgen{\frobenius {d} s}^{i\cdot q^u}\right].
			\end{equation*} 	
		\end{ntheorem}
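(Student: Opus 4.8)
The plan is to recover $\Phi_n$ as the sub-product of the factorization of $X^n-1$ given in \Cref{theorem: factorization of X^n-1 for gcd(n q)=1} consisting of the irreducible factors of order $n$. Since $X^n-1=\prod_{e\mid n}\Phi_e$ and all primitive $n$-th roots of unity have multiplicative order $n$, the polynomial $\Phi_n$ is exactly the product of those monic irreducible factors of $X^n-1$ whose order is $n$. In \Cref{theorem: factorization of X^n-1 for gcd(n q)=1} the irreducible factor attached to a pair $(v,i)$ has order $v\cdot\frac{\frobenius ds}{\gcd(i,\frobenius ds)}$; as $v\mid\frac{n}{\frobenius ds}$ and $\gcd(i,\frobenius ds)\geq 1$, this order is at most $\frac{n}{\frobenius ds}\cdot\frobenius ds=n$, with equality precisely when $v=\frac{n}{\frobenius ds}$ and $\gcd(i,\frobenius ds)=1$. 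Moreover every prime dividing $\frac{n}{\frobenius ds}$ divides $\rad(n)\mid q^w-1\mid q^s-1$ and divides $n$, hence divides $\frobenius ds$; so $\rad\bigl(\frac{n}{\frobenius ds}\bigr)\mid\frobenius ds$, and the constraint $\gcd(i,v)=1$ is automatically satisfied once $\gcd(i,\frobenius ds)=1$. Substituting $v=\frac{n}{\frobenius ds}$ into the formula of \Cref{theorem: factorization of X^n-1 for gcd(n q)=1} therefore expresses $\Phi_n$ as the product, over $i\in\cyclorepsystem{q}{\frobenius ds}$ with $\gcd(i,\frobenius ds)=1$, of the polynomials displayed there; it remains to compute $c_i$, the degree, and the number of factors.

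By \Cref{theorem: factorization of X^n-1 for gcd(n q)=1} the factor attached to $\bigl(\frac{n}{\frobenius ds},i\bigr)$ has degree $\frac{n}{\frobenius ds}\cdot c_i$, and, by the proof of \Cref{theorem: factorization X^n-a for gcd(n_q)=1}, $c_i=\min\{t\in\N:\frobenius ds/\frobenius dt\mid i\}=\ord_{\frobenius ds/\gcd(i,\frobenius ds)}(q)$, which for $\gcd(i,\frobenius ds)=1$ equals $\ord_{\frobenius ds}(q)$, independently of $i$. The decisive step is then to prove $\ord_{\frobenius ds}(q)=s$. The divisibility $\ord_{\frobenius ds}(q)\mid s$ is immediate from $\frobenius ds\mid q^s-1$. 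Conversely, every prime dividing $n$ divides $q^w-1$ and divides $\frobenius ds$, so $\rad(\frobenius ds)=\rad(n)$ and hence $w=\ord_{\rad(n)}(q)$ divides $\ord_{\frobenius ds}(q)$; since $s\in\{w,2w\}$, the only case needing work is $s=2w$, which occurs only when $4\mid n$ and $q^w\equiv 3\pmod 4$. In that case $\nu_2(q^w-1)=1$, while $q^w+1\equiv 0\pmod 4$ gives $\nu_2(q^{2w}-1)\geq 3$; together with $\nu_2(n)\geq 2$ this yields $\nu_2(\frobenius d{2w})\geq 2>1=\nu_2(\frobenius dw)$, so $\frobenius d{2w}\nmid q^w-1$, whence $\ord_{\frobenius ds}(q)\neq w$ and therefore $\ord_{\frobenius ds}(q)=2w=s$. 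Consequently $c_i=s$, which both turns the displayed polynomial into the one claimed in the theorem and shows each irreducible factor of $\Phi_n$ has degree $\frac{n}{\frobenius ds}\cdot s$.

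For the count, the set of $i$ with $0\leq i<\frobenius ds$ and $\gcd(i,\frobenius ds)=1$ has $\varphi(\frobenius ds)$ elements and, since $\gcd(q,\frobenius ds)=1$, is a disjoint union of $q$-cyclotomic cosets modulo $\frobenius ds$; each such coset $\cyclocoset q{\frobenius ds}i$ with $\gcd(i,\frobenius ds)=1$ has cardinality $\ord_{\frobenius ds}(q)=s$, so there are $\frac{\varphi(\frobenius ds)}{s}$ of them, which is the number of irreducible factors of $\Phi_n$. As a consistency check, the total degree is $\frac{\varphi(\frobenius ds)}{s}\cdot\frac{n}{\frobenius ds}\cdot s=\frac{n}{\frobenius ds}\,\varphi(\frobenius ds)=\varphi(n)$, using $\rad(n)=\rad(\frobenius ds)$. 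The main obstacle is precisely the $2$-adic case distinction establishing $\ord_{\frobenius ds}(q)=s$; everything else is bookkeeping over \Cref{theorem: factorization of X^n-1 for gcd(n q)=1}. A self-contained variant avoids invoking that theorem: for a primitive $n$-th root of unity $\zeta$ the element $\eta=\zeta^{\,n/\frobenius ds}$ is a primitive $\frobenius ds$-th root of unity lying in $\Fqto s$, the binomial $X^{n/\frobenius ds}-\eta$ is irreducible over $\Fqto s$ by \Cref{Serret1866: Theorem Irreducibility of X^n-a} (the three conditions holding because $\rad(n/\frobenius ds)\mid\frobenius ds=\ord(\eta)$, $\gcd(n/\frobenius ds,\tfrac{q^s-1}{\frobenius ds})=1$, and $q^s\equiv 1\pmod 4$ whenever $4\mid n$), and assembling the $q$-spins of these binomials via \Cref{theorem: corollary KK11 lemma 1} yields the stated factorization over $\Fq$.
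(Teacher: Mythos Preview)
Your proof is correct. Your primary argument derives the result from \Cref{theorem: factorization of X^n-1 for gcd(n q)=1} by singling out the irreducible factors of $X^n-1$ of order exactly $n$, whereas the paper deliberately gives a direct argument (the one you sketch at the end as the ``self-contained variant''): it writes $\Phi_n$ over $\Fqto s$ as the product of the binomials $X^{n/\frobenius ds}-\cyclgen{\frobenius ds}^i$ with $\gcd(i,\frobenius ds)=1$, checks their irreducibility via \Cref{Serret1866: Theorem Irreducibility of X^n-a}, and then passes to $\Fq$ using \Cref{theorem: corollary KK11 lemma 1}. The paper explicitly notes that the corollary route you take is available but opts for the direct proof instead. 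The crucial step is the same in both arguments, namely showing $\ord_{\frobenius ds}(q)=s$; the paper handles the $s=2w$ case by appealing to \Cref{fact: gcd(n q^2-1) for q=3 mod 4}, while your direct $2$-adic computation achieves the same thing. Your route has the virtue of exhibiting $\Phi_n$ transparently as a sub-product of the already-established factorization of $X^n-1$; the paper's route is independent of \Cref{theorem: factorization of X^n-1 for gcd(n q)=1} and so stands on its own.
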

	\end{tcolorbox}

\begin{proof}
	Let \(\mathcal  I:= \{0 \leq i \leq \frobenius ds -1: \gcd(i,\frobenius ds)=1\}\). Then every primitive \(n\)-th root of unity \(\cyclgen n\) is a root of a binomial of the form \(X^{\frac n {\frobenius ds}}-\cyclgen {\frobenius ds}^i\in \Fqto s[X]\) for a primitive \(\frobenius ds\)-th root of unity \(\cyclgen{\frobenius ds}^i\) with \(i \in \mathcal I\). Note that for two distinct elements \(i\) and \(\tilde i\) of \(\mathcal I\),  there do not exist any common roots of  \(X^{\frac n {\frobenius ds}}-\cyclgen{\frobenius ds}^i\) and  \(X^{\frac n {\frobenius ds}}- \cyclgen {\frobenius ds}^{\tilde i}\). Furthermore, there exist exactly \(\varphi(\frobenius ds)\) polynomials of this form. 
	
	Let \(i \in \mathcal I\). We show  that the binomial \(X^{\frac n {\frobenius ds}}-\cyclgen {\frobenius ds}^i\) is irreducible over \(\Fqto s\). Every prime factor of \(\frac n {\frobenius ds}\) is also a prime factor of \(\frobenius ds\) because  \(\rad(n)\mid q^s-1\) and \(\frobenius ds = \gcd( n, q^s-1)\). Since \( \ord(\cyclgen{\frobenius ds}^i)=\frobenius ds\), holds  \(\rad(\frac n {\frobenius ds})\mid \ord(\cyclgen {\frobenius ds}^i)\) and  \(\gcd(\frac n {\frobenius ds}, \frac {q^s-1}{\ord(\cyclgen{\frobenius ds}^i)}) = 1\). The positive integer \(s\) was selected  so that  either \(4 \nmid n\) or \(q^s\equiv 1\pmod 4\). With \Cref{Serret1866: Theorem Irreducibility of X^n-a} follows that the polynomial \(X^{\frac n {\frobenius ds}}-\cyclgen {\frobenius ds}^i\) is irreducible over \(\Fqto s\). Consequently,  \(\ord_n(q) = \frac n {\frobenius ds} \cdot s\). The roots of \(X^{\frac n {\frobenius ds}}-\cyclgen{\frobenius ds}^i\) are \(\frac n {\frobenius ds}\) distinct primitive \(n\)-th roots of unity.  Since there exist exactly \(\varphi(\frobenius ds)\) irreducible binomials of this form, we obtain that \(\varphi(n) = \frac n {\frobenius ds} \cdot \varphi(\frobenius ds)\). The factorization of \(\Phi_n\)  into monic irreducible factors over \(\Fqto s\) is :
	\[\prod_{\substack{i=0\\\gcd(i,\frobenius ds =1)}}^{\frobenius ds-1} (X^{\frac n {\frobenius ds}}-\cyclgen {\frobenius ds}^i).\]
	With \Cref{theorem: corollary KK11 lemma 1} the factorization of \(\Phi_n\) is given by \(\prod_{\substack{0 \leq i \leq \frobenius ds-1\\\gcd(i,\frobenius ds)=1}} \spin q {X^{\frac n {\frobenius ds}}-\cyclgen{\frobenius ds}^i}\). It remains to prove that for every \(i \in \mathcal I\), the primitive \(\frobenius ds\)-th root of unity \(\cyclgen {\frobenius ds}^i\) is a proper element of \(\Fqto s\). The smallest integer \(t\) such that \(\frobenius ds \mid q^t-1\) satisfies \(t = \ord_{\frobenius ds}(q)\geq \ord_{\rad(\frobenius ds)} (q)=  \ord_{\rad(n)}(q)  = w\) and from the fact that \(\frobenius ds\) divides \(q^s-1\) follows that \(t=\ord_{\frobenius ds} \leq s\). If \(s=w\), then \(t= \ord_{\frobenius ds}(q)=s\). If \(s=2w\), then \(4\mid n\) and \(q^w\equiv 3 \pmod 4\). Thus, with   \Cref{fact: gcd(n q^2-1) for q=3 mod 4} (\ref{item: Fact q^2-1 q equiv 3 mod 4 d_2}) holds \(\gcd(n,q^s-1) =\gcd(n,q^w-1) \cdot  2^{1+\min\{\nu_2(\frac n4), \nu_2(\frac {q+1}2)\}}\). Consequently, \(2 \mid \frac {\frobenius ds}{\frobenius dw}\) which implies that \(\frobenius ds \nmid q^w-1\) and \(s = \ord_{\frobenius ds}(q)\).  Thus, \(\coeffdeg q {X^{\frac n {\frobenius ds}}-\cyclgen {\frobenius ds }^i} = s\). Since \(X^{\frac n {\frobenius ds}}-\cyclgen {\frobenius ds }^i\) is a binomial, its \(q\)-spin can easily be computed:
	\[\spin q {X^{\frac n {\frobenius ds}} -\cyclgen{\frobenius ds}^i} =  \sum_{l=0}^{s}X^{\frac{n}{\frobenius d s}\cdot l}\cdot   (-1)^{c_i-l} \sum_{\substack{\mathcal U \subseteq \{0, \ldots, c_{i}-1\}\\|\mathcal U|=c_i-l}} \prod_{u\in \mathcal U}  \cyclgen{\frobenius {d} s}^{i\cdot q^u}.\]
\end{proof}}

\section{The factorization of $f(X^n)$}
\label{section: factorization of f(X^n)}

In this section we derive \Cref{theorem: factorization of f(X^n) for gcd(n q)=1}, a closed formula for the factorization of \(f(X^n)\) over \(\Fq\) for any monic  irreducible polynomial \(f\in \FqX\), \(f\neq X\), and any positive integer \(n\) such that \(\gcd(n,q)=1\), from our main theorem. The polynomial \(f=X\) is of no interest, since its composition with \(X^n\) always factors as \(f(X^n) = (X)^n\).	 Throughout this section let  \(f\in \FqX\) be a monic irreducible polynomial  of degree \(k\), \(f\neq X\), and \(\alpha\in \Fqk\) be a root of \(f\).  Then \Cref{theorem: factorization X^n-a for gcd(n_q)=1} yields the factorization of \(X^n-\alpha\) over \(\Fqk\). Therefore, the conditions from \Cref{section: factorization of X^n-a} that are posed on \(q\)   are conditions on \(q^k\) in this section. With \Cref{Mullin2010: Lemma 13} the factorization of \(f(X^n)\) is then given by the \(q\)-spins of all irreducible factors of \(X^n-\alpha\) over \(\Fqto k\).

\begin{nremark}\label{remark: gcd(n,q)>1 for f(X^n)}
	As in \Cref{section: factorization of X^n-a}, we can assume  \(\gcd(n,q)=1\) without loss of generality. Indeed, if \(\gcd(n,q)>1\), then \(n=\tilde n \cdot \char(\Fq)^l\) and there exists an element \(\gamma\in \Fqto k\) such that \(\gamma^{\char(\Fq)^l}=\alpha\). Let  \(\prod_R R\)  be the factorization of \(X^{\tilde n}-\gamma\) over \(\Fqto k\), then \((\prod_R R)^{\char(\Fq)^l}\) is the factorization of \(X^n-\alpha\) over \(\Fqto k\). With \Cref{Mullin2010: Lemma 13} the factorization of \(f(X^n)\) over \(\Fq\) then is given by \((\prod_R \spin q {R})^{\char(\Fq)^l}\). 
	
	Furthermore, we can asssume that \(f\) is monic without loss of generality. Indeed, if \(f=\sum_{i=0}^k a_i X^i\) is not monic, then \(a_k \in \Fq^\ast\setminus \{1\}\) and \(f = a_k \cdot g\) for a monic  polynomial \(g\in \FqX\). If \(\prod_{R\in \mathcal R} R\) is the factorization of \(g\in \FqX\) over \(\Fq\), then \(a_k \cdot \prod_{R\in \mathcal R}R\) is the factorization of \(f\) over \(\Fq\).
\end{nremark}

\setcounter{theorem}{19}
\begin{remark}
	The factorization of \(f(X^n)\) for the case \(\gcd(n,\ord(\alpha)\cdot k)=1\) and \(\rad(n)\mid q-1\) or \(\rad(n)\mid q^w-1\) for a prime \(w\) is given    in \cite{Brochero-MartinezReisSilva-Jesus2019}. In this remark we discuss the implications of this condition so that the differences between the existing and our results become clear. If \(\gcd(n,\ord(\alpha))=1\), then  there exists a positive integer \(r\) such that \(rn \equiv 1 \pmod {\ord(\alpha)}\) and  the element \(\alpha^r\) satisfies \((\alpha^{r})^n=\alpha\). Thus, this is a subcase of the case that there exists an element \(\beta\) in \(\Fqto{k}\) such that \(\beta^n = \alpha\).  The authors of \cite{Brochero-MartinezReisSilva-Jesus2019} claim that their results are generalizations of the results given in \cite{Brochero-MartinezGiraldo-VergaradeOliveira2015} and \cite{WuYueFan2018}.  Though this is technically true, we would like to point out that with \Cref{theorem: factorization of f(X^n) if beta^n=alpha} these factorizations could easily be obtained from the factorizations of \(X^n-1\) in \cite{Brochero-MartinezGiraldo-VergaradeOliveira2015, WuYueFan2018}. 	
	
	The condition \(\gcd(n,k)=1\)  implies that the roots of unity appearing in the factorization of \(f(X^n)\) (or in fact of \(X^n-\alpha\))  are elements of \(\Fq\) and not \(\Fqto k \). Since these roots of unity are determined by  \(\frobenius d k \), this can be seen from the following lemma and \Cref{theorem: d^(k)=d^(1)}, which is a corollary of \Cref{Bey77: Proposition 1 - nu_p(q^m-1)}.
\end{remark}

 \begin{lemma}[{\cite[Proposition 1]{Beyl1977}}]
 	\label{Bey77: Proposition 1 - nu_p(q^m-1)}
 	Let \(p\) be a prime such that \(p \mid (q-1)\) and let \(m\) be a positive integer. Then 
 	\begin{enumerate}[(i)]
 		\item If \(p\) is odd, then \(\nu_p(q^m-1) = \nu_p(q-1) + \nu_p(m).\)
 		\item If \(p=2\), then \(\nu_2(q^m-1)=\begin{cases}
 			\nu_2(q-1) & \text{ if } m \text{ is odd,}\\
 			\nu_2(q-1)+ \nu_2(m)+ \nu_2(q+1)-1& \text{ if } m \text{ is even,}
 		\end{cases}\)
 	\end{enumerate}
 	so that \(\nu_p(q^m-1) = \nu_p(q-1)+\nu_p(m)\) if \(q\equiv 1 \mod 4\).
 \end{lemma}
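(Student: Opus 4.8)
The plan is to prove this the standard "lifting the exponent" way, handling the odd-prime case and the prime~$2$ case separately, and then reading off the $q\equiv 1\pmod 4$ consequence at the end. Throughout, the only tool needed is careful bookkeeping of $p$-adic valuations, so the lemma is self-contained; since it is a classical cited result (\cite{Beyl1977}), a concise version suffices.

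For an odd prime $p\mid q-1$ I would isolate two building blocks. First, if $p\nmid m$ then $\nu_p(q^m-1)=\nu_p(q-1)$: write $q^m-1=(q-1)\sum_{i=0}^{m-1}q^i$ and note that $q\equiv 1\pmod p$ forces $\sum_{i=0}^{m-1}q^i\equiv m\pmod p$, which is a unit modulo $p$. Second, $\nu_p(q^p-1)=\nu_p(q-1)+1$: writing $q=1+p^au$ with $a=\nu_p(q-1)\geq 1$ and $p\nmid u$, expand $q^p=(1+p^au)^p$ by the binomial theorem. The $k=1$ term is $p^{a+1}u$, of valuation exactly $a+1$; for $2\leq k\leq p-1$ the term $\binom{p}{k}p^{ak}u^k$ has valuation $\geq 1+ak\geq 1+2a>a+1$ because $p\mid\binom{p}{k}$ and $a\geq 1$; and the $k=p$ term has valuation $ap\geq a+2$ since $p\geq 3$ and $a\geq 1$. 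Hence the minimum is attained uniquely at $k=1$. Iterating the second block gives $\nu_p(q^{p^s}-1)=\nu_p(q-1)+s$, and for general $m=p^st$ with $p\nmid t$, applying the first block with $q$ replaced by $q^{p^s}$ (still $\equiv 1\pmod p$) yields $\nu_p(q^m-1)=\nu_p(q^{p^s}-1)=\nu_p(q-1)+s=\nu_p(q-1)+\nu_p(m)$.

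For $p=2$ the hypothesis just says $q$ is odd. If $m$ is odd, the geometric-sum argument above gives $\sum_{i=0}^{m-1}q^i\equiv m\equiv 1\pmod 2$, so $\nu_2(q^m-1)=\nu_2(q-1)$. If $m$ is even, write $m=2^st$ with $s=\nu_2(m)\geq 1$ and $t$ odd; the odd case reduces matters to $\nu_2(q^m-1)=\nu_2(q^{2^s}-1)$, and then I would use the telescoping factorization $q^{2^s}-1=(q-1)(q+1)\prod_{j=1}^{s-1}(q^{2^j}+1)$. Since $q$ is odd, $q^{2^j}\equiv 1\pmod 8$ for $j\geq 1$, so each factor $q^{2^j}+1$ has $\nu_2=1$ (there are $s-1$ of them), while $(q-1)(q+1)$ contributes $\nu_2(q-1)+\nu_2(q+1)$ (one of these two consecutive even numbers has $\nu_2=1$, the other carrying the rest). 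Summing gives $\nu_2(q^{2^s}-1)=\nu_2(q-1)+\nu_2(q+1)+s-1$, which is the claimed formula. Finally, if $q\equiv 1\pmod 4$ then for odd $p$ the formula is already established; for $p=2$ we have $\nu_2(q-1)\geq 2$, hence $\nu_2(q+1)=1$, so in the even-$m$ formula $\nu_2(q-1)+\nu_2(m)+\nu_2(q+1)-1=\nu_2(q-1)+\nu_2(m)$, and in the odd-$m$ case $\nu_2(q^m-1)=\nu_2(q-1)=\nu_2(q-1)+\nu_2(m)$ since $\nu_2(m)=0$. The only steps requiring genuine care are the binomial-coefficient valuation bookkeeping in the odd-$p$ block and the $\pmod 8$ observation for $p=2$; everything else is formal manipulation.
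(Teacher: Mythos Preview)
Your proof is correct: it is the standard ``lifting the exponent'' argument, and every step checks out (including the binomial bookkeeping for odd $p$ and the $\pmod 8$ observation for the factors $q^{2^j}+1$). Note, however, that the paper does not give its own proof of this lemma; it simply quotes it as \cite[Proposition~1]{Beyl1977} and uses it as an input. So there is nothing to compare your argument against---you have supplied a self-contained proof where the paper just cites the result.
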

 
 \begin{corollary}\label{theorem: d^(k)=d^(1)}
 	Let \(n\in \N\) such that \(\rad(n)\mid q-1\) and \(\gcd(n,k)=1\). For a positive integer \(w\) we set  \(\frobenius{d}{w} := \gcd(n, q^w-1)\). Then
 	\begin{enumerate}[(i)]
 		\item \(\frobenius d {km} = \frobenius d m \) for every positive integer \(m\),
 		\item \(\frobenius d {2k} = 2^{l}\cdot \frobenius d 1\), where \(l = \min\{\max\{0,\nu_2(n)-\nu_2(q-1)\}, \nu_2(q+1)\}\).
 	\end{enumerate}
 \end{corollary}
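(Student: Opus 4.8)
The plan is to prove both identities by comparing $p$-adic valuations prime by prime. For any positive integer $w$ and any prime $p$ we have $\nu_p(\frobenius d w) = \min\{\nu_p(n), \nu_p(q^w-1)\}$ since $\frobenius d w = \gcd(n,q^w-1)$; this is nonzero only when $p \mid n$, and the hypothesis $\rad(n) \mid q-1$ then forces $p \mid q-1$, so \Cref{Bey77: Proposition 1 - nu_p(q^m-1)} is available for every prime that matters. Two consequences of $\gcd(n,k)=1$ will be used repeatedly: if $p \mid n$ then $\nu_p(k) = 0$, and in particular $k$ is odd whenever $2 \mid n$.

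For (i) it suffices to fix a prime $p \mid n$ and a positive integer $m$ and show $\nu_p(q^{km}-1) = \nu_p(q^m-1)$. If $p$ is odd, \Cref{Bey77: Proposition 1 - nu_p(q^m-1)}(i) gives $\nu_p(q^{km}-1) = \nu_p(q-1) + \nu_p(km) = \nu_p(q-1) + \nu_p(m) = \nu_p(q^m-1)$, because $\nu_p(k) = 0$. If $p = 2$, then $k$ is odd, so $km$ and $m$ have the same parity and $\nu_2(km) = \nu_2(m)$; feeding this into whichever case of \Cref{Bey77: Proposition 1 - nu_p(q^m-1)}(ii) applies (according to the parity of $m$) again yields $\nu_2(q^{km}-1) = \nu_2(q^m-1)$. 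Since the valuations agree at every prime, $\frobenius d {km} = \frobenius d m$.

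For (ii) put $a := \nu_2(n)$, $b := \nu_2(q-1)$, $c := \nu_2(q+1)$. For an odd prime $p \mid n$, \Cref{Bey77: Proposition 1 - nu_p(q^m-1)}(i) gives $\nu_p(q^{2k}-1) = \nu_p(q-1) + \nu_p(2k) = \nu_p(q-1)$ (as $p \nmid 2k$), so $\nu_p(\frobenius d {2k}) = \min\{\nu_p(n), \nu_p(q-1)\} = \nu_p(\frobenius d 1)$, which is the exponent of $p$ in $2^l \frobenius d 1$. If $2 \mid n$, then $2k$ is even with $\nu_2(2k) = 1$, so \Cref{Bey77: Proposition 1 - nu_p(q^m-1)}(ii) gives $\nu_2(q^{2k}-1) = b + 1 + c - 1 = b + c$, whence $\nu_2(\frobenius d {2k}) = \min\{a, b+c\}$ while $\nu_2(\frobenius d 1) = \min\{a, b\}$; a short case split on whether $a \le b$ confirms $\min\{a, b+c\} = \min\{\max\{0, a-b\}, c\} + \min\{a, b\} = l + \nu_2(\frobenius d 1)$, and if $2 \nmid n$ all three quantities are $0$. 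Matching valuations at all primes gives $\frobenius d {2k} = 2^l \cdot \frobenius d 1$.

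The argument is entirely elementary; the only delicate point is the $p = 2$ clause of \Cref{Bey77: Proposition 1 - nu_p(q^m-1)}, where the parity of the exponent changes the formula. This is precisely why the hypothesis $\gcd(n,k) = 1$ — which makes $k$ odd when $n$ is even, so that multiplying the exponent by $k$ preserves parity — is the natural one, and why the comparatively intricate correction factor $2^l$ surfaces in (ii) rather than a clean equality.
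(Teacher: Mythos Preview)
Your proof is correct and follows essentially the same approach as the paper: both arguments compare $p$-adic valuations prime by prime using \Cref{Bey77: Proposition 1 - nu_p(q^m-1)}, with the same case distinction between odd $p$ and $p=2$. Your version is in fact slightly more complete, since in part (ii) you explicitly verify the identity $\min\{a,b+c\}=\min\{\max\{0,a-b\},c\}+\min\{a,b\}$ and treat the case $2\nmid n$, whereas the paper stops after computing $\nu_p(q^{2k}-1)$ and leaves these checks to the reader.
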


 \begin{proof}
 	\begin{enumerate}[(i)]
 	\item Let \(p\) be a prime divisor of \(n\), then if \(p\) odd, we have \(\nu_p(q^{km}-1)=\nu_p(q^m-1)+\nu_p(k)= \nu_p(q^m-1)\). If \(p=2\), then \(k\) is odd, because \(\gcd(n,k)=1\), and \(\nu_2(q^{km}-1)=\nu_2(q^m-1)\).
 		\item Let \(p\) be a prime divisor of \(n\), then if \(p\) odd, we have \(\nu_p(q^{2k} -1)= \nu_p(q-1) + \nu_p(2k) = \nu_p(q-1)\), because \(p\neq 2\) and \(\gcd(n,k)=1\). If \(p=2\), then \(\nu_2(q^{2k}-1) = \nu_2(q-1) + \nu_2(2k) + \nu_2(q+1) -1 = \nu_2(q-1)+1 + \nu_2(q+1)-1=\nu_2(q-1)+\nu_2(q+1)\).
 	\end{enumerate}
 \end{proof}

\begin{remark}
	In \cite[Theorem 4.1]{Brochero-MartinezReisSilva-Jesus2019} the authors do not mention the condition \(\gcd(n, \ord(\alpha)\cdot k)=1\). However, without this condition the statement of the theorem would be false. For example with \Cref{Bey77: Proposition 1 - nu_p(q^m-1)}  the greatest common divisor \(d = \frobenius{d}{2}\) would equal \( \frobenius d 1 \cdot \gcd(n,k) \cdot 2^l\), where \(l := \min \{\nu_2(\frac n2), \nu_2(q+1)\}\) and not satisfy the equation \(d= 2^l \cdot \frobenius d 1\). Furthermore, the authors prove this result using results from their Section 3 and the condition \(\gcd(n,\ord(\alpha)\cdot k)\) is set throughout this section. 
\end{remark}

We use the following lemma in the proof of  \Cref{theorem: factorization of f(X^n) for gcd(n q)=1} to simplify our formula for the factorization of \(f(X^n)\). It shows that the \(q\)-spin is transitive. 

\setcounter{theorem}{25}

\begin{lemma}\label{theorem: spin(spin(g))}
	Let \(k,s\) be positive integers and \(g\) be an irreducible polynomial over \(\Fqto{ks}\) such that  \(k \mid \coeffdeg  qg\), then the \(q\)-spin of \(g\) satisfies
	\[\spin q g = \spin q {\spin{q^k}{g}}.\]
\end{lemma}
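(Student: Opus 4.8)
Write $d := \coeffdeg q g$ and assume $g$ monic, as is implicit in the notion of $q$-spin. The plan is to unfold both sides into products of Frobenius twists $\frobenius g j$ of $g$ and then compare index sets. Since the coefficients of $g$ generate $\Fqto d$ over $\Fq$ and lie in $\Fqto{ks}$, we have $d\mid ks$, so $g$ is monic irreducible over $\Fqto d$ and, by the definition of the $q$-spin (cf.\ \Cref{KK11: Lemma 1}), $\spin q g=\prod_{j=0}^{d-1}\frobenius g j$. Two facts will drive the argument: (a) $\coeffdeg{q^k}g=d/k$, so that $\spin{q^k}g=\prod_{i=0}^{d/k-1}\frobenius g{ki}=:G$; and (b) $\coeffdeg q G=k$, so that $\spin q G=\prod_{j=0}^{k-1}\frobenius G j$.

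For (a) I would use the hypothesis $k\mid d$ to get the tower $\Fq\subseteq\Fqto k\subseteq\Fqto d=\Fq(\text{coefficients of }g)$, whence $\coeffdeg{q^k}g=[\Fqto d:\Fqto k]=d/k$; the formula for $G$ is then just the definition of the $q^k$-spin applied to $g$ viewed over $\Fqto d=\F_{(q^k)^{d/k}}$, and \Cref{KK11: Lemma 1} says $G$ is monic irreducible over $\Fqto k$. Step (b) is the crux. For a positive integer $e$, multiplicativity of $h\mapsto\frobenius h e$ gives $\frobenius G e=\prod_{i=0}^{d/k-1}\frobenius g{ki+e}$; since $\coeffdeg q g=d$ means $\frobenius g a=\frobenius g b$ exactly when $a\equiv b\pmod d$, and since distinct Frobenius twists of $g$ are pairwise coprime monic irreducibles over $\Fqto d$, unique factorization shows $\frobenius G e=G$ if and only if the subsets $\{ki+e\bmod d:0\le i<d/k\}$ and $\{ki\bmod d:0\le i<d/k\}$ of $\Z/d\Z$ coincide. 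The latter is the subgroup $\{0,k,2k,\dots,d-k\}$ and the former is its coset by $e$, so they agree if and only if $e$ lies in that subgroup, i.e.\ if and only if $k\mid e$. Hence the least positive such $e$ equals $k$, that is, $\coeffdeg q G=k$.

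With (a) and (b) the conclusion follows by a short calculation: using the definition of the $q$-spin and multiplicativity of $h\mapsto\frobenius h j$,
\[
\spin q{\spin{q^k}g}=\spin q G=\prod_{j=0}^{k-1}\frobenius G j=\prod_{j=0}^{k-1}\prod_{i=0}^{d/k-1}\frobenius g{ki+j}=\prod_{\ell=0}^{d-1}\frobenius g\ell=\spin q g,
\]
the penultimate equality being the partition of $\{0,\dots,d-1\}$ into residues modulo $k$ (valid since $k\mid d$). The main obstacle is step (b): showing that the coefficients of the intermediate polynomial $G=\spin{q^k}g$ have degree exactly $k$ over $\Fq$ rather than a proper divisor of $k$; the coset computation above settles it. (Alternatively, one can use \Cref{KK11: Lemma 1} to identify $G$ with the minimal polynomial over $\Fqto k$ of a root $\beta$ of $g$, so that both $\spin q G$ and $\spin q g$ equal the minimal polynomial of $\beta$ over $\Fq$; but justifying the application of \Cref{KK11: Lemma 1} to $G$ still needs $\coeffdeg q G=k$.) All remaining steps are routine bookkeeping of Frobenius twists.
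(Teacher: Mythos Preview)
Your proof is correct and follows the same route as the paper: expand both spins as products of Frobenius twists $g^{(j)}$ and match the index sets via the partition of $\{0,\dots,d-1\}$ into residue classes modulo $k$ (the paper writes $kd$ for your $d$). You are in fact more careful than the paper, which simply writes $\spin q G=\prod_{l=0}^{k-1}G^{(l)}$ without checking that $\coeffdeg q G=k$; your coset argument in step (b) supplies exactly this missing justification.
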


\begin{proof}
	Since \(k \mid \coeffdeg q g\), there exists a divisor  \(d\) of \(s\) such that \(\coeffdeg q g=kd\) and  \(\coeffdeg {q^k} g=d\). Then if \(g=\sum_{i=0}^{m} a_i X^i\),
	\begin{align*}
		\spin q {\spin{q^k}{g}} &= \spin q {\prod_{j=0}^{d-1} \sum_{i=0}^m a_i^{q^{kj}} X^i } = \prod_{l=0}^{k-1} \prod_{j=0}^{d-1} \sum_{i=0}^{m} a_i ^{q^{kj+l}} X^i \\
		&=  \prod_{j=0}^{kd-1}\sum_{i=0}^{m} a_i^{q^j} X^i = \spin q g ,
	\end{align*}
	because \(q^{l}\) is a power of \(\char(\Fq)\) and for the coefficients of the polynomial \(\prod_{j=0}^{d-1} \sum_{i=0}^{m} a_i ^{q^{kj+l}} X^i\) we can write i.e. \((a_i^{q^{kj}}+a_{\tilde i} ^{q^{k\tilde j}})^{q^l} = a_i^{q^{kj+l}}+a_{\tilde i}^{q^{k\tilde j + l}}\).
\end{proof}

The next theorem gives a closed formula for the factorization of \(f(X^n)\) into monic irreducible polynomials over \(\Fq\) for any positive integer \(n\) such that \(\gcd(n,q)=1\) and any monic irreducible polynomial \(f\in \FqX\) such that \(f\neq X\). It covers all factorizations given in \cite{Brochero-MartinezReisSilva-Jesus2019}.

\changed{
\begin{tcolorbox}
\begin{theorem}\label{theorem: factorization of f(X^n) for gcd(n q)=1}
	Let \(f\in \FqX\), \(f\neq X\), be a monic irreducible polynomial of degree \(k\) and \(\alpha\in \Fqk^\ast\) be a root of \(f\). Furthermore, let  \(n\) be a positive integer such that \(\gcd(n,	q)=1\) and  \(n=n_1 \cdot n_2,\) where \(\rad(n_1)\mid \ord(\alpha)\) and \(\gcd(n_2, \ord(\alpha))=1\). Let \(w=\ord_{\rad(n)}(q^k)\) and set  \(s:=w\) if \(4\nmid n\) or \(q^{kw}\equiv 1 \pmod 4\), else set \(s:=2w\). For all positive integers \(t\)   we set \(\frobenius{d_1} t := \gcd(n_1, \frac {q^{kt}-1}{\ord(a)})\) and \(\frobenius{d_2} t:= \gcd(n_2, q^{kt}-1)\). If \(4 \nmid \frobenius {d_1} s \) or \(q^k\equiv 1 \pmod 4\), then \(s_1:=\frac {\frobenius{d_1}s} {\frobenius {d_1}1}\), otherwise  \(s_1:=  \frac {2{\frobenius{d_1}s}} {\frobenius {d_1}2} \). For every \(i \in \cyclorepsystem{q}{\frobenius {d_2}s}\) we set \(t_i :=  \min\{t\in \N: \frac{\frobenius{d_2}s}{\frobenius{d_2}t}\mid i\}\)  and \(c_i:= \lcm(t_i,s_1)\). 
	
	Then there exists \(\beta\in \Fqto{ks}\) such that \(\beta^{\frobenius{d_1}{s}} = a\) and the factorization of \(f(X^n)\) into monic irreducible factors over \(\Fq\) is 
	\begin{equation*}
		\prod_{j\in \{0,\ldots, \frobenius{d_1}{s}-1\}/\sim } \; \prod_{v\mid \frac {n_2}{\frobenius{d_2}{s}}} \; \prod_{\substack{i \in \cyclorepsystem{q^k}{\frobenius{d_2}{s}}\\ \gcd(i,v)=1}} \prod_{m=0}^{\gcd(t_i,s_1)-1}  S_{(j,v,i,m)},\; %\spin q {R_{j,v,i}}, 
	\end{equation*} 
	and for all applicable \((j,v,i,m)\) the monic irreducible polynomial  \(S_{(j,v,i,m)}\) of degree \(k \cdot \frac {n_1}{\frobenius{d_1}s} \cdot v \cdot c_i\) and order \(\ord(\alpha) \cdot n_1 \cdot v\cdot   \frac {\frobenius{d_2}{s}}{\gcd(i, \frobenius{d_2}{s})}\) is defined as  \begin{equation*}
		S_{(j,v,i,m)}=\sum_{l=0}^{kc_{i}} X^{\frac {n_1}{\frobenius {d_1} s} \cdot v \cdot l}\cdot   (-1)^{kc_i-l} \sum_{\substack{\mathcal U \subseteq \{0, \ldots, kc_{i}-1\}\\|\mathcal U|=kc_i-l}} \prod_{u\in \mathcal U}  (\cyclgen{\frobenius {d_2} s}^{i\cdot q^m} (\cyclgen{\frobenius{d_1} s}^{j} \beta)^{rv})^{q^u},
	\end{equation*} where \(r\) is a  positive integer satisfying \(r=1\) if \(f=X-1\) or  \(rn_2 \equiv 1 \pmod{\ord(\alpha)\cdot \frobenius{d_1}{s}}\) otherwise. 	Furthermore, for all \(j,\tilde j\in \{0 , \ldots, \frobenius {d_1} s-1\}\) holds \(j\sim \tilde j\) if and only  if \(\cyclgen{\frobenius{d_1} s}^{\tilde j} = \cyclgen{\frobenius{d_1}{s}}^{j\cdot q^{km}}\beta^{q^{km}-1}\) for an integer \(0\leq m\leq s_1-1\).
\end{theorem}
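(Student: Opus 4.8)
The plan is to reduce the factorization of $f(X^n)$ over $\Fq$ to the factorization of $X^n-\alpha$ over $\Fqto k$ by means of \Cref{Mullin2010: Lemma 13}, then invoke the main theorem \Cref{theorem: factorization X^n-a for gcd(n_q)=1} with the base field $\Fqto k$ playing the role of $\Fq$ (so that every occurrence of $q$ in its parameters is replaced by $q^k$), and finally pass from $q^k$-spins to $q$-spins using the transitivity \Cref{theorem: spin(spin(g))}.

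Concretely, writing $Q=\frac{X^n}1$ we have $f(X^n)=f^Q$, so \Cref{Mullin2010: Lemma 13} gives the factorization of $f(X^n)$ over $\Fq$ as $\prod_R\spin qR$, where $\prod_R R$ is the factorization of $X^n-\alpha$ into monic irreducible factors over $\Fqto k$ and $\coeffdeg qR=k$ for each such $R$. Since $\gcd(n,q)=1$ forces $\gcd(n,q^k)=1$, \Cref{theorem: factorization X^n-a for gcd(n_q)=1} applied over $\Fqto k$ with $a=\alpha$ describes these factors as $R=\spin{q^k}{R'}$, where each $R'$ is a binomial $X^{\frac{n_1}{\frobenius{d_1}s}v}-\cyclgen{\frobenius{d_2}s}^{iq^{km}}(\cyclgen{\frobenius{d_1}s}^{j}\beta)^{rv}$ over $\Fqto{ks}$, and the parameters $w,s,\frobenius{d_1}t,\frobenius{d_2}t,s_1,t_i,c_i,r$ are exactly the "$q^k$-versions" in the statement (in particular $w=\ord_{\rad(n)}(q^k)$, $\frobenius{d_1}t=\gcd(n_1,\frac{q^{kt}-1}{\ord(\alpha)})$, $\frobenius{d_2}t=\gcd(n_2,q^{kt}-1)$, $c_i=\lcm(t_i,s_1)$), with $\coeffdeg{q^k}{R'}=c_i$ and $\deg R'=\frac{n_1}{\frobenius{d_1}s}v$, and the equivalence relation $\sim$ on $j$ read off from the main theorem.

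Next I would compute $\spin qR=\spin q{\spin{q^k}{R'}}$. The hypothesis of \Cref{theorem: spin(spin(g))} needs $k\mid\coeffdeg q{R'}$; this holds because property (i) of \Cref{theorem: factorization of X^n-a for rad(n) mid q-1_4 nmid n or q = 1 mod 4} (applied over $\Fqto k$) shows the constant term $c$ of $R'$ satisfies $c^{N}=\alpha$ with $N=\frac{n_2}{v}\frobenius{d_1}s\in\N$, so $\Fqto k=\Fq(\alpha)\subseteq\Fq(c)$ and hence $\coeffdeg q{R'}=[\Fq(c):\Fq]=k\,[\Fqto k(c):\Fqto k]=k\,c_i$. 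Therefore $\spin qR=\spin q{R'}=\prod_{u=0}^{kc_i-1}\bigl(X^{\frac{n_1}{\frobenius{d_1}s}v}-c^{q^u}\bigr)$, and expanding this product of binomials by elementary symmetric functions yields precisely the polynomial $S_{(j,v,i,m)}$ of the statement (with $kc_i$ in place of $c_i$), of degree $kc_i\cdot\frac{n_1}{\frobenius{d_1}s}v$. For the order, since $\spin q{R'}$ is irreducible over $\Fq$ all its roots are $\Fq$-conjugate and so share the order of any root of $X^{\frac{n_1}{\frobenius{d_1}s}v}-c$, which by \Cref{theorem: ord(X^t-a)} is $\frac{n_1}{\frobenius{d_1}s}v\cdot\ord(c)$; property (ii) of \Cref{theorem: factorization of X^n-a for rad(n) mid q-1_4 nmid n or q = 1 mod 4} gives $\ord(c)=\ord(\alpha)\,\frobenius{d_1}s\,\frac{\frobenius{d_2}s}{\gcd(i,\frobenius{d_2}s)}$ (unchanged under $i\mapsto iq^{km}$ since $\gcd$ is preserved), so the order equals $n_1 v\,\ord(\alpha)\,\frac{\frobenius{d_2}s}{\gcd(i,\frobenius{d_2}s)}$.

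Finally I would assemble $f(X^n)=\prod_R\spin qR=\prod_{(j,v,i,m)}S_{(j,v,i,m)}$, the index set being inherited verbatim from the factorization of $X^n-\alpha$ over $\Fqto k$: $j$ ranging over $\{0,\dots,\frobenius{d_1}s-1\}/\!\sim$ for the stated ($q^k$-)relation, $v\mid\frac{n_2}{\frobenius{d_2}s}$, $i\in\cyclorepsystem{q^k}{\frobenius{d_2}s}$ with $\gcd(i,v)=1$, and $0\le m\le\gcd(t_i,s_1)-1$, after checking that the degenerate case $\alpha=1$ (i.e.\ $f=X-1$, forcing $n_1=1$, $\beta=1$, $r=1$) is consistent. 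The only genuinely delicate points are the careful translation of all parameters and of the two equivalence relations into their "$q^k$-over-$\Fqto k$" forms, and the verification that $k\mid\coeffdeg q{R'}$ so that \Cref{theorem: spin(spin(g))} applies; everything else is the bookkeeping already done in the proof of \Cref{theorem: factorization X^n-a for gcd(n_q)=1}, transported to the field $\Fqto k$.
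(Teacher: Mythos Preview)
Your proposal is correct and follows essentially the same route as the paper: apply \Cref{theorem: factorization X^n-a for gcd(n_q)=1} over $\Fqto k$ to factor $X^n-\alpha$, then use \Cref{Mullin2010: Lemma 13} together with the transitivity \Cref{theorem: spin(spin(g))} to pass from $q^k$-spins to $q$-spins of the underlying binomials, and finally expand the resulting product of conjugate binomials by elementary symmetric functions. Your argument is in fact slightly more careful than the paper's at one point: you explicitly verify the hypothesis $k\mid\coeffdeg q{R'}$ of \Cref{theorem: spin(spin(g))} via the relation $c^{N}=\alpha$ (property (i) of \Cref{theorem: factorization of X^n-a for rad(n) mid q-1_4 nmid n or q = 1 mod 4}), whereas the paper simply asserts $\coeffdeg q{R_{(j,v,i,m)}}=kc_i$ without spelling this out. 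Note also that your exponent $q^{km}$ on $\cyclgen{\frobenius{d_2}s}$ is the one that arises naturally from transporting \Cref{theorem: factorization X^n-a for gcd(n_q)=1} to base field $\Fqto k$; the appearance of $q^m$ in the displayed formula of the statement is a minor inconsistency in the paper rather than a flaw in your argument.
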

\end{tcolorbox}

\begin{proof}
	The factorization of \(X^n-\alpha\) over \(\Fqto k\) is given by \Cref{theorem: factorization X^n-a for gcd(n_q)=1} as 
	\begin{equation*}
		\prod_{j\in \{0,\ldots, \frobenius{d_1}{s}-1\}/\sim } \; \prod_{v\mid \frac {n_2}{\frobenius{d_2}{s}}} \; \prod_{\substack{i \in \cyclorepsystem{q^k}{\frobenius{d_2}{s}}\\ \gcd(i,v)=1}} \prod_{m=0}^{\gcd(t_i,s_1)-1}  S_{(j,v,i,m)}
	\end{equation*}
	where \(S_{(j,v,i,m)}\) is the \(q^k\)-spin of the binomial \(R_{(j,v,i,m)}:=X^{\frac {n_1}{\frobenius {d_1}s}\cdot v} - \cyclgen {\frobenius {d_2}s}^{i\cdot q^m} (\cyclgen{\frobenius {d_1}s}^j \beta)^{rv}\) with  \(\coeffdeg{q^k}{R_{(j,v,i,m)}}=c_i\). The positive integer \(r\) satisfies \(r=1\) if \(\alpha=1\), which is equivalent to \(f = X-1\), or  \(rn_2 \equiv 1 \pmod{\ord(\alpha)\cdot \frobenius{d_1}{s}}\) otherwise.  Then with \Cref{Mullin2010: Lemma 13} the factorization of \(f(X^n)=f^Q\) for \(Q=\frac {X^n}{1} \in \FqXrational\) is given by 
	\begin{align*}
		\prod_{j\in \{0,\ldots, \frobenius{d_1}{s}-1\}/\sim } \; \prod_{v\mid \frac {n_2}{\frobenius{d_2}{s}}} \; \prod_{\substack{i \in \cyclorepsystem{q^k}{\frobenius{d_2}{s}}\\ \gcd(i,v)=1}} \prod_{m=0}^{\gcd(t_i,s_1)-1}  \spin q {S_{(j,v,i,m)}},
	\end{align*}
	and \(\coeffdeg{q}{S_{(j,v,i,m)}}=k\). With \Cref{theorem: spin(spin(g))} holds \(\spin q {S_{(j,v,i,m)}} = \spin q {R_{(j,v,i,m)}}\) and \(\coeffdeg{q}{R_{(j,v,i,m)}}=kc_i\). Since \(R_{(j,v,i,m)}\) is a binomial, its \(q\)-spin can easily be computed:
	\begin{align*}
		\sum_{l=0}^{kc_{i}} X^{\frac {n_1}{\frobenius {d_1} s} \cdot v \cdot l}\cdot   (-1)^{kc_i-l} \sum_{\substack{\mathcal U \subseteq \{0, \ldots, kc_{i}-1\}\\|\mathcal U|=kc_i-l}} \prod_{u\in \mathcal U}  (\cyclgen{\frobenius {d_2} s}^{i\cdot q^m} (\cyclgen{\frobenius{d_1} s}^{j} \beta)^{rv})^{q^u}
	\end{align*}
	The degree of the \(q\)-spin of \(R_{(j,v,i,m)}\) is \(kc_i\cdot \frac {n_1}{\frobenius{d_1}s} \cdot v\) and its order is the same as the order of \(S_{(j,v,i,m)}\), which is \(\ord(\alpha)\cdot n_1 \cdot v \cdot \frac {\frobenius {d_2}s}{\gcd(i,\frobenius {d_2}s)}\).
\end{proof}}

	\printbibliography

\end{document}